\setlist[enumerate]{nosep, label=(\arabic*)}
\xpatchcmd{\proof}{\itshape}{\normalfont\proofnamefont}{}{}
\newcommand{\proofnamefont}{\scshape}
\theoremstyle{plain}
\newtheorem*{ucorollary}{Corollary}
\newtheorem{main}{Main Theorem}
\newtheorem{theorem}{Theorem}[section]
\newtheorem{proposition}[theorem]{Proposition}
\newtheorem{lemma}[theorem]{Lemma}
\newtheorem{corollary}[theorem]{Corollary}
\theoremstyle{definition}
\newtheorem{definition}[theorem]{Definition}
\newtheorem{remark}[theorem]{Remark}
\numberwithin{equation}{section}
\DeclareMathOperator{\inte}{int}
\newcommand{\mc}{\mathcal}
\newcommand{\mr}{\mathrm}
\newcommand{\eqpd}{\coloneqq}
\newcommand{\nnint}{{\mathbb{N}_0}}
\newcommand{\pint}{\mathbb{N}}
\newcommand{\integers}{\mathbb{Z}}
\newcommand{\Cp}{\mathcal{P}^{\circ\bullet}}
\newcommand{\Cppnb}{\mathcal{P}^{\circ\bullet}_{2,\mathrm{nb}}}
\newcommand{\resbr}{\mathcal{B}_{\mathrm{res}}}
\newcommand{\trmw}{\mathrm{WIn}}
\newcommand{\trms}{\mathrm{SIn}}
\newcommand{\colors}{\{\circ,\bullet\}}
\newcommand{\brp}{\mathrm{Br}}
\newcommand{\minbr}{\mathcal{B}_{\mathrm{min}}}
\newcommand{\idpt}[1]{\mathrm{Id}({#1})}
\colorlet{lightgray}{black!40}
\colorlet{gray}{black!60}
\colorlet{darkgray}{black!80}
\newcommand{\PartBracketBBWW}{%
  \begin{tikzpicture}[scale=0.25,baseline=0.04cm]
    \def\xdist{0.666}
    \def\ydist{1.25}
    \def\ypar{0.5}
    \node [scale=0.33, circle, draw=black, fill=black] (a1) at ({0*\xdist},{0*\ydist}) {};
    \node [scale=0.33, circle, draw=black, fill=black] (a2) at ({1*\xdist},{0*\ydist}) {};
    \node [scale=0.33, circle, draw=black, fill=white] (a3) at ({2*\xdist},{0*\ydist}) {};
    \node [scale=0.33, circle, draw=black, fill=white] (a4) at ({3*\xdist},{0*\ydist}) {};
    \node [scale=0.33, circle, draw=black, fill=black] (b1) at ({0*\xdist},{1*\ydist}) {};
    \node [scale=0.33, circle, draw=black, fill=black] (b2) at ({1*\xdist},{1*\ydist}) {};
    \node [scale=0.33, circle, draw=black, fill=white] (b3) at ({2*\xdist},{1*\ydist}) {};
    \node [scale=0.33, circle, draw=black, fill=white] (b4) at ({3*\xdist},{1*\ydist}) {};
    \draw (a1) -- ++ (0,{\ypar}) -| (a4);
    \draw (b1) -- ++ (0,{-\ypar}) -| (b4);
    \draw (a2) to (b2);
    \draw (a3) to (b3);    
  \end{tikzpicture}
}
\newcommand{\PartBracketWWBB}{%
  \begin{tikzpicture}[scale=0.25,baseline=0.04cm]
    \def\xdist{0.666}
    \def\ydist{1.25}
    \def\ypar{0.5}
    \node [scale=0.33, circle, draw=black, fill=white] (a1) at ({0*\xdist},{0*\ydist}) {};
    \node [scale=0.33, circle, draw=black, fill=white] (a2) at ({1*\xdist},{0*\ydist}) {};
    \node [scale=0.33, circle, draw=black, fill=black] (a3) at ({2*\xdist},{0*\ydist}) {};
    \node [scale=0.33, circle, draw=black, fill=black] (a4) at ({3*\xdist},{0*\ydist}) {};
    \node [scale=0.33, circle, draw=black, fill=white] (b1) at ({0*\xdist},{1*\ydist}) {};
    \node [scale=0.33, circle, draw=black, fill=white] (b2) at ({1*\xdist},{1*\ydist}) {};
    \node [scale=0.33, circle, draw=black, fill=black] (b3) at ({2*\xdist},{1*\ydist}) {};
    \node [scale=0.33, circle, draw=black, fill=black] (b4) at ({3*\xdist},{1*\ydist}) {};
    \draw (a1) -- ++ (0,{\ypar}) -| (a4);
    \draw (b1) -- ++ (0,{-\ypar}) -| (b4);
    \draw (a2) to (b2);
    \draw (a3) to (b3);    
  \end{tikzpicture}
}
\newcommand{\PartBracketBWwW}{%
  \begin{tikzpicture}[scale=0.25,baseline=0.04cm]
    \def\xdist{0.666}
    \def\ydist{1.25}
    \def\ypar{0.5}
    \node [scale=0.33, circle, draw=black, fill=black] (a1) at ({0*\xdist},{0*\ydist}) {};
    \node [scale=0.33, circle, draw=black, fill=white] (a2) at ({1*\xdist},{0*\ydist}) {};
    \node [scale=0.33, circle, draw=black, fill=white] (a3) at ({4.2*\xdist},{0*\ydist}) {};
    \node [scale=0.33, circle, draw=black, fill=black] (b1) at ({0*\xdist},{1*\ydist}) {};
    \node [scale=0.33, circle, draw=black, fill=white] (b2) at ({1*\xdist},{1*\ydist}) {};
    \node [scale=0.33, circle, draw=black, fill=white] (b3) at ({4.2*\xdist},{1*\ydist}) {};
    \node [scale=0.55] (l1) at ({2.5*\xdist},{1*\ydist}) {$\otimes w$ };
    \draw (a1) -- ++ (0,{\ypar}) -| (a3);
    \draw (b1) -- ++ (0,{-\ypar}) -| (b3);
    \draw (a2) to (b2);
  \end{tikzpicture}
}
\newcommand{\PartPermWWwW}{%
  \begin{tikzpicture}[scale=0.25,baseline=0.04cm]
    \def\xdist{0.666}
    \def\ydist{1.25}
    \def\ypar{0.5}
    \node [scale=0.33, circle, draw=black, fill=white] (a1) at ({0*\xdist},{0*\ydist}) {};
    \node [scale=0.33, circle, draw=black, fill=white] (a2) at ({1.55*\xdist},{-0.075*\ydist}) {};
    \node [scale=0.33, circle, draw=black, fill=white] (a3) at ({6.2*\xdist},{0*\ydist}) {};
    \node [scale=0.33, circle, draw=black, fill=white] (b1) at ({0*\xdist},{0.825*\ydist}) {};
    \node [scale=0.33, circle, draw=black, fill=white] (b2) at ({1.55*\xdist},{0.9*\ydist}) {};
    \node [scale=0.33, circle, draw=black, fill=white] (b3) at ({6.2*\xdist},{0.825*\ydist}) {};
    \node [scale=0.55] (l1) at ({3.6*\xdist},{1.0*\ydist}) {$\otimes w\hspace{-3pt}-\hspace{-4pt}1$ };
    \draw (a1) -- (b3);
    \draw (b1) -- (a3);
    \draw (a2) to (b2);
  \end{tikzpicture}
}
\newcommand{\PartBracketBWBW}{%
  \begin{tikzpicture}[scale=0.25,baseline=0.04cm]
    \def\xdist{0.666}
    \def\ydist{1.25}
    \def\ypar{0.5}
    \node [scale=0.33, circle, draw=black, fill=black] (a1) at ({0*\xdist},{0*\ydist}) {};
    \node [scale=0.33, circle, draw=black, fill=white] (a2) at ({1*\xdist},{0*\ydist}) {};
    \node [scale=0.33, circle, draw=black, fill=black] (a3) at ({2*\xdist},{0*\ydist}) {};
    \node [scale=0.33, circle, draw=black, fill=white] (a4) at ({3*\xdist},{0*\ydist}) {};
    \node [scale=0.33, circle, draw=black, fill=black] (b1) at ({0*\xdist},{1*\ydist}) {};
    \node [scale=0.33, circle, draw=black, fill=white] (b2) at ({1*\xdist},{1*\ydist}) {};
    \node [scale=0.33, circle, draw=black, fill=black] (b3) at ({2*\xdist},{1*\ydist}) {};
    \node [scale=0.33, circle, draw=black, fill=white] (b4) at ({3*\xdist},{1*\ydist}) {};
    \draw (a1) -- ++ (0,{\ypar}) -| (a4);
    \draw (b1) -- ++ (0,{-\ypar}) -| (b4);
    \draw (a2) to (b2);
    \draw (a3) to (b3);    
  \end{tikzpicture}
}
\newcommand{\PartBracketWBWB}{%
  \begin{tikzpicture}[scale=0.25,baseline=0.04cm]
    \def\xdist{0.666}
    \def\ydist{1.25}
    \def\ypar{0.5}
    \node [scale=0.33, circle, draw=black, fill=white] (a1) at ({0*\xdist},{0*\ydist}) {};
    \node [scale=0.33, circle, draw=black, fill=black] (a2) at ({1*\xdist},{0*\ydist}) {};
    \node [scale=0.33, circle, draw=black, fill=white] (a3) at ({2*\xdist},{0*\ydist}) {};
    \node [scale=0.33, circle, draw=black, fill=black] (a4) at ({3*\xdist},{0*\ydist}) {};
    \node [scale=0.33, circle, draw=black, fill=white] (b1) at ({0*\xdist},{1*\ydist}) {};
    \node [scale=0.33, circle, draw=black, fill=black] (b2) at ({1*\xdist},{1*\ydist}) {};
    \node [scale=0.33, circle, draw=black, fill=white] (b3) at ({2*\xdist},{1*\ydist}) {};
    \node [scale=0.33, circle, draw=black, fill=black] (b4) at ({3*\xdist},{1*\ydist}) {};
    \draw (a1) -- ++ (0,{\ypar}) -| (a4);
    \draw (b1) -- ++ (0,{-\ypar}) -| (b4);
    \draw (a2) to (b2);
    \draw (a3) to (b3);    
  \end{tikzpicture}
}
\newcommand{\PartPermWBvWvW}{%
  \begin{tikzpicture}[scale=0.25,baseline=0.04cm]
    \def\xdist{0.666}
    \def\ydist{1.25}
    \def\ypar{0.5}
    \node [scale=0.33, circle, draw=black, fill=white] (a1) at ({0*\xdist},{0*\ydist}) {};
    \node [scale=0.33, circle, draw=black, fill=black] (a2) at ({1.05*\xdist},{-0.075*\ydist}) {};
    \node [scale=0.33, circle, draw=black, fill=white] (a3) at ({3.9*\xdist},{-0.075*\ydist}) {};
    \node [scale=0.33, circle, draw=black, fill=white] (a4) at ({6.65*\xdist},{0*\ydist}) {};
    \node [scale=0.33, circle, draw=black, fill=white] (b1) at ({0*\xdist},{0.825*\ydist}) {};
    \node [scale=0.33, circle, draw=black, fill=black] (b2) at ({1.05*\xdist},{0.9*\ydist}) {};
    \node [scale=0.33, circle, draw=black, fill=white] (b3) at ({3.9*\xdist},{0.9*\ydist}) {};
    \node [scale=0.33, circle, draw=black, fill=white] (b4) at ({6.65*\xdist},{0.825*\ydist}) {};
    \node [scale=0.55] (l1) at ({2.3*\xdist},{1*\ydist}) {$\otimes v$ };
    \node [scale=0.55] (l2) at ({5.15*\xdist},{1*\ydist}) {$\otimes v$ };        
    \draw (a1) -- (b4);
    \draw (b1) -- (a4);
    \draw (a2) to (b2);
    \draw (a3) to (b3);    
  \end{tikzpicture}
}
\newcommand{\PartHalfLibWBW}{%
  \begin{tikzpicture}[scale=0.25,baseline=0.04cm]
    \def\xdist{0.75}
    \def\ydist{1.25}
    \def\ypar{0.5}
    \node [scale=0.33, circle, draw=black, fill=white] (a1) at ({0*\xdist},{0*\ydist}) {};
    \node [scale=0.33, circle, draw=black, fill=black] (a2) at ({1*\xdist},{0*\ydist}) {};
    \node [scale=0.33, circle, draw=black, fill=white] (a3) at ({2*\xdist},{0*\ydist}) {};
    \node [scale=0.33, circle, draw=black, fill=white] (b1) at ({0*\xdist},{1*\ydist}) {};
    \node [scale=0.33, circle, draw=black, fill=black] (b2) at ({1*\xdist},{1*\ydist}) {};
    \node [scale=0.33, circle, draw=black, fill=white] (b3) at ({2*\xdist},{1*\ydist}) {};
    \draw (a1) to (b3);    
    \draw (a2) to (b2);
    \draw (a3) to (b1);    
  \end{tikzpicture}
}
\newcommand{\PartHalfLibBWB}{%
  \begin{tikzpicture}[scale=0.25,baseline=0.04cm]
    \def\xdist{0.75}
    \def\ydist{1.25}
    \def\ypar{0.5}
    \node [scale=0.33, circle, draw=black, fill=black] (a1) at ({0*\xdist},{0*\ydist}) {};
    \node [scale=0.33, circle, draw=black, fill=white] (a2) at ({1*\xdist},{0*\ydist}) {};
    \node [scale=0.33, circle, draw=black, fill=black] (a3) at ({2*\xdist},{0*\ydist}) {};
    \node [scale=0.33, circle, draw=black, fill=black] (b1) at ({0*\xdist},{1*\ydist}) {};
    \node [scale=0.33, circle, draw=black, fill=white] (b2) at ({1*\xdist},{1*\ydist}) {};
    \node [scale=0.33, circle, draw=black, fill=black] (b3) at ({2*\xdist},{1*\ydist}) {};
    \draw (a1) to (b3);    
    \draw (a2) to (b2);
    \draw (a3) to (b1);    
  \end{tikzpicture}
  }
  \newcommand{\PartHalfLibWWW}{%
  \begin{tikzpicture}[scale=0.25,baseline=0.04cm]
    \def\xdist{0.75}
    \def\ydist{1.25}
    \def\ypar{0.5}
    \node [scale=0.33, circle, draw=black, fill=white] (a1) at ({0*\xdist},{0*\ydist}) {};
    \node [scale=0.33, circle, draw=black, fill=white] (a2) at ({1*\xdist},{0*\ydist}) {};
    \node [scale=0.33, circle, draw=black, fill=white] (a3) at ({2*\xdist},{0*\ydist}) {};
    \node [scale=0.33, circle, draw=black, fill=white] (b1) at ({0*\xdist},{1*\ydist}) {};
    \node [scale=0.33, circle, draw=black, fill=white] (b2) at ({1*\xdist},{1*\ydist}) {};
    \node [scale=0.33, circle, draw=black, fill=white] (b3) at ({2*\xdist},{1*\ydist}) {};
    \draw (a1) to (b3);    
    \draw (a2) to (b2);
    \draw (a3) to (b1);    
  \end{tikzpicture}
}
\newcommand{\PartCrossWW}{%
  \begin{tikzpicture}[scale=0.25,baseline=0.015cm]
    \def\xdist{1}
    \def\ydist{1}
    \node [scale=0.33, circle, draw=black, fill=white] (a1) at ({0*\xdist},{0*\ydist}) {};
    \node [scale=0.33, circle, draw=black, fill=white] (a2) at ({1*\xdist},{0*\ydist}) {};
    \node [scale=0.33, circle, draw=black, fill=white] (b1) at ({0*\xdist},{1*\ydist}) {};
    \node [scale=0.33, circle, draw=black, fill=white] (b2) at ({1*\xdist},{1*\ydist}) {};
    \draw (a1) to (b2);    
    \draw (a2) to (b1);
  \end{tikzpicture}
  }
  \newcommand{\PartIdenB}{%
  \begin{tikzpicture}[scale=0.25,baseline=0.015cm]
    \def\xdist{1}
    \def\ydist{1}
    \node [scale=0.33, circle, draw=black, fill=black] (a1) at ({0*\xdist},{0*\ydist}) {};
    \node [scale=0.33, circle, draw=black, fill=black] (b1) at ({0*\xdist},{1*\ydist}) {};
    \draw (a1) to (b1);    
  \end{tikzpicture}
  }
  \newcommand{\PartIdenW}{%
  \begin{tikzpicture}[scale=0.25,baseline=0.015cm]
    \def\xdist{1}
    \def\ydist{1}
    \node [scale=0.33, circle, draw=black, fill=white] (a1) at ({0*\xdist},{0*\ydist}) {};
    \node [scale=0.33, circle, draw=black, fill=white] (b1) at ({0*\xdist},{1*\ydist}) {};
    \draw (a1) to (b1);    
  \end{tikzpicture}
  }
\newcommand{\PartIdenLoBW}{%
  \begin{tikzpicture}[scale=0.25,baseline=-0.015cm]
    \def\xdist{1}
    \def\ydist{1}
    \def\ypar{1}
    \node [scale=0.33, circle, draw=black, fill=black] (a1) at ({0*\xdist},{0*\ydist}) {};
    \node [scale=0.33, circle, draw=black, fill=white] (a2) at ({1*\xdist},{0*\ydist}) {};
    \draw (a1) -- ++ (0,{\ypar}) -| (a2);    
  \end{tikzpicture}
  }
\newcommand{\PartIdenLoWB}{%
  \begin{tikzpicture}[scale=0.25,baseline=-0.015cm]
    \def\xdist{1}
    \def\ydist{1}
    \def\ypar{1}
    \node [scale=0.33, circle, draw=black, fill=white] (a1) at ({0*\xdist},{0*\ydist}) {};
    \node [scale=0.33, circle, draw=black, fill=black] (a2) at ({1*\xdist},{0*\ydist}) {};
    \draw (a1) -- ++ (0,{\ypar}) -| (a2);    
  \end{tikzpicture}
  }
  \newcommand{\PartIdenLoBB}{%
  \begin{tikzpicture}[scale=0.25,baseline=-0.015cm]
    \def\xdist{1}
    \def\ydist{1}
    \def\ypar{1}
    \node [scale=0.33, circle, draw=black, fill=black] (a1) at ({0*\xdist},{0*\ydist}) {};
    \node [scale=0.33, circle, draw=black, fill=black] (a2) at ({1*\xdist},{0*\ydist}) {};
    \draw (a1) -- ++ (0,{\ypar}) -| (a2);    
  \end{tikzpicture}
}
  \newcommand{\PartIdenLoWW}{%
  \begin{tikzpicture}[scale=0.25,baseline=-0.015cm]
    \def\xdist{1}
    \def\ydist{1}
    \def\ypar{1}
    \node [scale=0.33, circle, draw=black, fill=white] (a1) at ({0*\xdist},{0*\ydist}) {};
    \node [scale=0.33, circle, draw=black, fill=white] (a2) at ({1*\xdist},{0*\ydist}) {};
    \draw (a1) -- ++ (0,{\ypar}) -| (a2);    
  \end{tikzpicture}
  }
\begin{document}
\title[Categories of Two-Colored Pair Partitions II]{Categories of Two-Colored Pair Partitions\\Part~II: Categories Indexed By Semigroups} 
\author{Alexander Mang}
\author{Moritz Weber}
\address{Saarland University, Fachbereich Mathematik, 
	66041 Saarbrücken, Germany}
\email{s9almang@stud.uni-saarland.de, weber@math.uni-sb.de}
\thanks{The second author was supported by the ERC Advanced Grant NCDFP, held
by Roland Spei\-cher, by the SFB-TRR 195, and by the DFG project \emph{Quantenautomorphismen von Graphen}. This work was part of the first author's Master's thesis.}

\date{\today}
\subjclass[2010]{05A18 (Primary),  20G42 (Secondary)}
\keywords{quantum groups, unitary easy quantum groups, unitary group,  half-liberation, tensor category, two-colored partitions, partitions of sets, categories of partitions, Brauer algebra}

\begin{abstract}
  Within the framework of unitary easy quantum groups, we study an analogue of Brauer's Schur-Weyl approach to the representation theory of the orthogonal group. We consider concrete combinatorial categories whose morphisms are formed by partitions of finite sets into disjoint subsets of cardinality two; the points of these sets are colored black or white. These categories correspond to \enquote{half-liberated easy} interpolations between the unitary group and Wang's quantum counterpart. We complete the classification of all such categories demonstrating that the subcategories of a certain natural halfway point are equivalent to additive subsemigroups of the natural numbers; the categories above this halfway point have been classified in a preceding article. We achieve this using combinatorial means exclusively. Our work reveals that the half-liberation procedure is quite different from what was previously known from the orthogonal case.

\end{abstract}
\maketitle

\section*{Introduction}
\label{section:introduction}

Given partitions of an upper and a lower finite sequence of two-colored points into disjoint sets, one can create new partitions of this kind by vertical and horizontal concatenation as well as exchanging the roles of upper and lower row. In this article we proceed with classifying the sets of partitions invariant under these operations. They resemble the structures introduced by Brauer \cite{Br37} in order to study the representation theory of the orthogonal group; see Section~\ref{subsection:categories}. The classification program of such sets of partitions was begun in \cite{TaWe15a} and since continued in \cite{MaWe18a} and \cite{Gr18}.  See also \cite{Fr17}. Such categories are of importance in Banica and Speicher's path (\cite{BaSp09}, \cite{We16} and \cite{We17a}) towards compact quantum groups  in Woronowicz's sence (\cite{Wo87}, \cite{Wo88}, \cite{Wo91} and \cite{Wo98}). However, we use combinatorial means exclusively. The quantum-algebraic implications of the combinatorial result are discussed in Section~\ref{section:concluding_remarks}.\par
We deal with subcategories of a specified category $\mathcal P^{\circ\bullet}_{2,\mathrm{nb}}$ of pair partitions which in addition conform with a certain rule on their coloration. In \cite{MaWe18a}, we showed that a subcategory $\mathcal S_0$ of  $\mathcal P^{\circ\bullet}_{2,\mathrm{nb}}$ exists such that for every category $\mathcal C\subseteq \mathcal P^{\circ\bullet}_{2,\mathrm{nb}}$ holds $\mathcal S_0\subseteq \mathcal C$ or  $\mathcal C\subseteq \mathcal S_0$. All the categories satisfying the latter condition we determined in \cite{MaWe18a}. In the present article we address the subcategories of $\mathcal S_0$ and classify them. For a short introduction to two-colored partitions, see Section~\ref{section:preliminaries}.
\vspace{-0.25em}
\section{Main Results}
\vspace{-0.15em}
\label{section:main_result}
We define and characterize a class of categories of two-colored partitions equivalent to the additive subsemigroups of $\nnint$.
\vspace{-0.15em}
\begin{main}
 \label{theorem:main_1}
  \begin{enumerate}[label=(\alph*), labelwidth=!]
  \item  \label{item:main_1-1}
    For each subsemigroup $D$ of $(\mathbb N_0,+)$, in short: $D\in \mc D$, a category of two-colored partitions is given by the set $\mathcal I_D$ of all two-colored pair partitions with the following properties satisfied when the partition is rotated to one line:
    \begin{enumerate}
    \item \label{item:main_1-1-1}
      Each block contains one point each of every color.
    \item \label{item:main_1-1-2}
      Between the two legs of any block lie as many black points as white ones.
    \item \label{item:main_1-1-3}
      Two points of opposite color may not belong to crossing blocks if the following condition is met: The difference in the numbers of black and white points between them amounts to an element of $D$.
    \end{enumerate}
  \item \label{item:main_1-2a} The categories $(\mc I_D)_{D\in \mc D}$ are pairwise distinct.
\item \label{item:main_1-2} For all $D,D'\in \mc D$ holds
  \begin{align*}
    D\subseteq D'\implies \mc I_D\supseteq \mc I_{D'}.
  \end{align*}
  In particular, \(\mc I_\nnint\subseteq \mc I_D\subseteq \mc I_\emptyset\) holds for every $D\in\mc D$.    
    \item  \label{item:main_1-3}
      For every finite subset $w$ of $\mathbb N$ (with $0\notin \pint$), define:
      \vspace{-0.25em}
  \begin{center}
    $\mathrm{Br}_\bullet(w)\coloneq$\hspace{-10pt}
    \begin{tikzpicture}[scale=0.666,baseline=2.35cm*0.666]
      \draw[dotted, gray] (-0.5,0) -- (11.5,0);
      \draw[dotted, gray] (-0.5,5) -- (11.5,5);
      \node[circle, scale=0.4, draw=black, fill=black] (x1) at (0,0) {};
      \node[circle, scale=0.4, draw=black, fill=white] (x2) at (11,0) {};
      \node[circle, scale=0.4, draw=black, fill=black] (x3) at (0,5) {};
      \node[circle, scale=0.4, draw=black, fill=white] (x4) at (11,5) {};
      \node[circle, scale=0.4, draw=black, fill=black] (y1) at (5,0) {};
      \node[circle, scale=0.4, draw=black, fill=white] (y2) at (6,0) {};
      \node[circle, scale=0.4, draw=black, fill=black] (y3) at (5,5) {};
      \node[circle, scale=0.4, draw=black, fill=white] (y4) at (6,5) {};
      \node[circle, scale=0.4, draw=black, fill=black] (z1) at (2,0) {};
      \node[circle, scale=0.4, draw=black, fill=white] (z2) at (9,0) {};
      \node[circle, scale=0.4, draw=black, fill=black] (z3) at (2,5) {};
      \node[circle, scale=0.4, draw=black, fill=white] (z4) at (9,5) {};
      \node[circle, scale=0.4, draw=black, fill=black] (w1) at (3.5,0) {};
      \node[circle, scale=0.4, draw=black, fill=white] (w2) at (7.5,0) {};
      \node[circle, scale=0.4, draw=black, fill=black] (w3) at (3.5,5) {};
      \node[circle, scale=0.4, draw=black, fill=white] (w4) at (7.5,5) {};                  
      \draw (x1) -- ++(0,2) -| (x2);
      \draw (x3) -- ++(0,-2) -| (x4);
      \draw (y1) to (y3);
      \draw (y2) to (y4);
      \draw (z1) -- ++(0,1) -| (z2);
      \draw (z3) -- ++(0,-1) -| (z4);
      \draw (w1) to (w3);
      \draw (w2) to (w4);
      \draw[dashed] (5.5,2.5) -- (5.5,-1.2);
      \draw[dashed] (5.5,2.5) -- (5.5,5.9);
      \draw[dashed] (5.5,2.5) -- (-0.8,2.5);
      \draw[dashed] (5.5,2.5) -- (11.8,2.5);      
      \node at (5.5,6.2) {symmetry axis $A_{\mathrm{vert}}$};
      \node[align=center, right] at (11.6,2.5) {symmetry\\ axis $A_{\mathrm{hor}}$};      
      \node [below = 2pt of x1] {$\max(w)$};
      \node [below = 2pt of z1] (u1) {$j$};
      \node [below = 2pt of w1] (u2) {$i$};
      \node [below = 2pt of y1] {$0$};
      \path (x1)  -- node [pos=0.5, above=2pt] {$\ldots$} (z1);
      \path (z1)  -- node [pos=0.5, above=2pt] {$\ldots$} (w1);
      \path (w1)  -- node [pos=0.5, above=2pt] {$\ldots$} (y1);
      \path (x3)  -- node [pos=0.5, below=2pt] {$\ldots$} (z3);
      \path (z3)  -- node [pos=0.5, below=2pt] {$\ldots$} (w3);
      \path (w3)  -- node [pos=0.5, below=2pt] {$\ldots$} (y3);
      \path (x2)  -- node [pos=0.5, above=2pt] {$\ldots$} (z2);
      \path (z2)  -- node [pos=0.5, above=2pt] {$\ldots$} (w2);
      \path (w2)  -- node [pos=0.5, above=2pt] {$\ldots$} (y2);
      \path (x4)  -- node [pos=0.5, below=2pt] {$\ldots$} (z4);
      \path (z4)  -- node [pos=0.5, below=2pt] {$\ldots$} (w4);
      \path (w4)  -- node [pos=0.5, below=2pt] {$\ldots$} (y4);
      \node  (l1) [below = 1.2cm of u1, right] {if $j\in w$: block crosses $A_{\mathrm{vert}}$};
      \draw[->] (l1.south west) -- (u1.south);
      \node  (l2) [below = 0.6cm of u2, right] {if $i\notin w$: block crosses $A_{\mathrm{hor}}$};
       \draw[->] (l2.south west) -- (u2.south);
    \end{tikzpicture}
  \end{center}
  For every $D\in \mc D$, the category $\mathcal I_D$ is generated
  \begin{itemize}
  \item [$\filledsquare$] by the partition $\mathrm{Br}_\bullet(\mathbb N\backslash D)$ if $\pint\backslash D$ is finite,
  \item [$\filledsquare$] by the partitions $\{\mathrm{Br}_\bullet(\{1,\ldots,v\}\backslash D)\mid v\in \pint\}$ if $\pint\backslash D$ is infinite 
    \item [$\filledsquare$] and, in both cases, in addition to that, by $\PartHalfLibWBW$ if $0\notin D$.
  \end{itemize}
     \end{enumerate}
  \end{main}
At the same time, we show that these categories are in fact all categories of two-colored partitions that exist below the unique maximal element of the family.
  \begin{main}
    \label{theorem:main_2}
For every category $\mc C$ with\[\langle \emptyset\rangle\subseteq \mc C\subseteq \mc I_\emptyset,\] there exists $D\in \mc D$ such that $\mc C=\mc I_D$. In particular, $\langle\emptyset\rangle=\mc I_\nnint$.
\end{main}
These two theorems warrant a corollary when combined with the results of \cite{MaWe18a}.\par
  For every $w\in \nnint$, denote by $\mc S_w$ the set given by all two-colored pair partitions with the following properties satisfied once the partition is rotated to one line:
      \begin{enumerate}
    \item \label{item:main_1-1-1}
      Each block contains one point each of every color.
    \item \label{item:main_1-1-2}
      Between the two legs of any block the difference in the numbers of black and white points is a multiple of $w$.
    \end{enumerate}
    In \cite{MaWe18a} the following facts were shown:
    The sets $(\mc S_w)_{w\in \nnint}$ are pairwise distinct categories of two-colored partitions with
    \begin{align*}
      \mc S_0=\bigcap_{w'\in\pint}\mc S_{w'}\subseteq \mc S_w\subseteq \mc S_1=\langle \PartCrossWW\rangle
    \end{align*}
    for every $w\in \nnint$ and with
    \begin{align*}
      w\integers\subseteq w'\integers \implies \mc S_w\subseteq \mc S_{w'}
    \end{align*}
    for all $w,w'\in \nnint$.
    For every $w\in \pint$, the category $\mc S_w$ is generated by
    \begin{gather*}
      \begin{tikzpicture}[scale=0.666, baseline=0]
\node [scale=0.4, fill=black, draw=black, circle] (x1) at (0,0) {};
\node [scale=0.4, fill=black, draw=black, circle] (x3) at (0,3) {};
\node [scale=0.4, fill=white, draw=black, circle] (y1) at (1,0) {};		
\node [scale=0.4, fill=white, draw=black, circle] (y2) at (1,3) {};				
\node [scale=0.4, fill=white, draw=black, circle] (z1) at (4,0) {};		
\node [scale=0.4, fill=white, draw=black, circle] (z2) at (4,3) {};					
\node [scale=0.4, fill=white, draw=black, circle] (x2) at (5,0) {};
\node [scale=0.4, fill=white, draw=black, circle] (x4) at (5,3) {};				
\draw (x1) -- ++(0,1) -| (x2);
\draw (x3) -- ++(0,-1) -| (x4);		
\draw (y1) -- (y2);
\draw (z1) -- (z2);	
\node at (2.5,0) {$\ldots$};
\node at (2.5,3) {$\ldots$};
\draw [dotted] (0.75,-0.25) -- ++(0,-0.25) -- (4.25,-0.5) -- ++ (0,0.25);
\node at (2.5,-1) {$w$ times};
\end{tikzpicture}.
\end{gather*}	
The category $\mc S_0$ is cumulatively generated by the partitions
\begin{gather*}
 \PartHalfLibWBW\quad\text{and}\quad\mathrm{Br}_\bullet(\{v\}) \text{ for all } v\in \pint. 
\end{gather*}
\par
We combine the results from \cite{MaWe18a} with the above theorems, yielding a full classification.

\begin{ucorollary}
    \label{corollary:main_3}
    For every category $\mc C$ with
    \begin{align*}
      \langle \emptyset\rangle\subseteq \mc C\subseteq \langle \PartCrossWW\rangle
    \end{align*}
    there exist either $D\in\mc D$ such that $\mc C=\mc I_D$ or $w\in \nnint$ such that $\mc C=\mc S_w$.
 The categories gathered in the two families $(\mc I_D)_{D\in\mc D}$ and $(\mc S_w)_{w\in \pint}$ are all pairwise distinct and for every $D\in \mc D$ and all $w\in \nnint$ holds
    \begin{align*}
      \langle\emptyset\rangle=\mc I_\nnint\subseteq \mc I_D\subseteq \mc I_\emptyset=\mc S_0\subseteq \mc S_w\subseteq \mc S_1=\langle\PartCrossWW\rangle.
    \end{align*}
\end{ucorollary}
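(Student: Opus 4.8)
The plan is to obtain the corollary purely by assembling Main Theorem~\ref{theorem:main_1} and Main Theorem~\ref{theorem:main_2} with the facts recalled from \cite{MaWe18a}; the one new ingredient that welds the two families into a single chain is the identification $\mc I_\emptyset=\mc S_0$.

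First I would establish $\mc I_\emptyset=\mc S_0$ by comparing the two explicit descriptions. Since $\mc S_0=\bigcap_{w'\in\pint}\mc S_{w'}$, a two-colored pair partition lies in $\mc S_0$ precisely when each of its blocks joins one point of either color and the difference between the numbers of black and white points between the legs of every block is a multiple of every positive integer, i.e.\ vanishes. For $D=\emptyset$, on the other hand, condition~\ref{item:main_1-1-3} of Main Theorem~\ref{theorem:main_1} is vacuous (no difference can lie in $\emptyset$), so $\mc I_\emptyset$ is cut out by the other two conditions alone, which is the very same requirement. Hence $\mc I_\emptyset=\mc S_0$.

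Next, let $\mc C$ be a category with $\langle\emptyset\rangle\subseteq\mc C\subseteq\langle\PartCrossWW\rangle$. As $\langle\PartCrossWW\rangle=\mc S_1\subseteq\Cppnb$ by \cite{MaWe18a}, also $\mc C\subseteq\Cppnb$, so the dichotomy proved in \cite{MaWe18a} applies: either $\mc C\subseteq\mc S_0$ or $\mc S_0\subseteq\mc C$. In the first case $\langle\emptyset\rangle\subseteq\mc C\subseteq\mc S_0=\mc I_\emptyset$, and Main Theorem~\ref{theorem:main_2} yields a $D\in\mc D$ with $\mc C=\mc I_D$. In the second case $\mc S_0\subseteq\mc C\subseteq\mc S_1=\langle\PartCrossWW\rangle$, and the classification of the categories between $\mc S_0$ and $\langle\PartCrossWW\rangle$ carried out in \cite{MaWe18a} produces a $w\in\nnint$ with $\mc C=\mc S_w$. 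This settles the first assertion.

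Finally I would collect the distinctness and the inclusion chain. Distinctness within $(\mc I_D)_{D\in\mc D}$ is Main Theorem~\ref{theorem:main_1}\ref{item:main_1-2a}; distinctness within $(\mc S_w)_{w\in\pint}$ is part of \cite{MaWe18a}; and no $\mc I_D$ coincides with any $\mc S_w$ for $w\in\pint$ because $\mc I_D\subseteq\mc I_\emptyset=\mc S_0\subsetneq\mc S_w$, where strictness follows from the pairwise distinctness of the $\mc S_{w'}$. The chain $\langle\emptyset\rangle=\mc I_\nnint\subseteq\mc I_D\subseteq\mc I_\emptyset=\mc S_0\subseteq\mc S_w\subseteq\mc S_1=\langle\PartCrossWW\rangle$ then follows from $\langle\emptyset\rangle=\mc I_\nnint$ (Main Theorem~\ref{theorem:main_2}), from $\emptyset\subseteq D\subseteq\nnint$ combined with the monotonicity in Main Theorem~\ref{theorem:main_1}\ref{item:main_1-2}, from $\mc I_\emptyset=\mc S_0$, and from the chain recalled from \cite{MaWe18a}. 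There is no serious obstacle here; it is essentially bookkeeping. The two points that require attention are the identification $\mc I_\emptyset=\mc S_0$ — the hinge between the two families — and the indexing of the $\mc S$-family by $\pint$ rather than $\nnint$ in the distinctness statement, since $\mc S_0$ is already present in the other family as $\mc I_\emptyset$ and would otherwise cause a redundant repetition.
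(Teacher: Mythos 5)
Your proposal is correct and follows exactly the assembly the paper intends (the paper gives no written proof of the corollary but precedes it with the sentence that it results from combining the preceding material). The identification $\mc I_\emptyset=\mc S_0$ is indeed the hinge, and it is in fact recorded in the paper as Proposition~\ref{proposition:set_relations}~\ref{item:set_relations-item_2}, so your re-derivation from the explicit descriptions is consistent with but not needed for the paper's machinery; the dichotomy $\mc C\subseteq\mc S_0$ or $\mc S_0\subseteq\mc C$, the classification above $\mc S_0$ from \cite{MaWe18a}, Main Theorem~\ref{theorem:main_2} below $\mc S_0$, and the distinctness and monotonicity statements combine as you describe, and your observation that the $\mc S$-family must be indexed by $\pint$ rather than $\nnint$ to avoid the redundancy $\mc S_0=\mc I_\emptyset$ is precisely right.
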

This classifies all unitary half-liberations, i.e.\ all easy quantum groups $G$ with $U_n\subseteq G\subseteq U_n^+$. See Section~\ref{section:concluding_remarks} for a discussion of this quantum group context and for other implications of these results.
\par
\vfill
\pagebreak
\section{Reminder on Two-Colored Partitions and their Categories}
\label{section:preliminaries}
For a more detailed introduction to two-colored partitions and their categories, confer \cite{TaWe15a}, and, more specifically for this article, for a treatment of two-colored partitions with neutral blocks, including more examples and illustrations, see \cite{MaWe18a}.
\subsection{Two-Colored Partitions}

\begin{wrapfigure}[6]{L}{4.5cm}
  \centering
  \begin{tikzpicture}[scale=0.666]
    \draw [dotted] (-0.5,0) -- (5.5,0);
    \draw [dotted] (-0.5,3) -- (5.5,3);
    \node [circle, scale=0.4, draw=black, fill=white] (x1) at (0,0) {};
    \node [circle, scale=0.4, draw=black, fill=black] (x2) at (1,0) {};
    \node [circle, scale=0.4, draw=black, fill=white] (x3) at (2,0) {};
    \node [circle, scale=0.4, draw=black, fill=black] (x4) at (3,0) {};
    \node [circle, scale=0.4, draw=black, fill=black] (x5) at (4,0) {};
    \node [circle, scale=0.4, draw=black, fill=white] (x6) at (5,0) {};
    \node [circle, scale=0.4, draw=black, fill=white] (y1) at (0,3) {};
    \node [circle, scale=0.4, draw=black, fill=white] (y2) at (1,3) {};
    \node [circle, scale=0.4, draw=black, fill=white] (y3) at (2,3) {};
    \node [circle, scale=0.4, draw=black, fill=black] (y4) at (3,3) {};
    \node [circle, scale=0.4, draw=black, fill=white] (y5) at (4,3) {};
    \node [circle, scale=0.4, draw=black, fill=black] (y6) at (5,3) {};
    \draw (x1) to (y1);
    \draw (x4) to (y4);
    \draw (x5) to (y6);
    \draw (x6) to (y5);
    \draw (x2) -- ++(0,1) -| (x3);
    \draw (y2) -- ++(0,-1) -| (y3);    
  \end{tikzpicture}
\end{wrapfigure}

By a \emph{(two-colored) partition}  we mean a combinatorial object specified by the following data: two finite sets, the \emph{upper} and \emph{lower row}, a total order on each of them (from \emph{left}, less, to \emph{right}, greater), an exhaustive decomposition into mutually disjoint subsets, the \emph{blocks}, of the disjoint union of the upper and lower row (the \emph{points}) and, lastly, a two-valued (\emph{black}, $\bullet$, or \emph{white}, $\circ$) map on the points, assigning to every point its \emph{color}. \par
If a block contains both upper and lower points, we call it a \emph{through block} and a \emph{non-through block} otherwise. We say that $\circ$ and $\bullet$ are \emph{inverse} to each other. Partitions are represented graphically by two parallel lines of black and white dots connected by a collection of strings.  The set of all partitions is denoted by $\mathcal P^{\circ\bullet}$. 
A partition each of whose blocks has two elements is called a \emph{pair partition}, an element of $\mathcal P^{\circ\bullet}_2$. We restrict ourselves to pair partitions in this article.

\subsection{Operations}
\label{subsection:operations}
From given partitions $p$ and $p'$, a new partition, the \emph{tensor product} $p\otimes p'$, is created by appending each row of $p'$ at the right end of the respective row of $p$.
\par
By exchanging the roles of the upper and the lower row of $p\in \mathcal P^{\circ\bullet}$, we obtain the \emph{involution} $p^*$ of $p$.
\par
A pairing $(p,p')$ of partitions $p,p'\in \mathcal P^{\circ\bullet}$ is \emph{composable} if the lower row of $p'$ and the upper row of $p$ agree in size and coloration. Under these conditions vertical concatenation is possible and yields the \emph{composition} $pp'$ of $(p,p')$: The lower row of $p$ also becomes the lower row of $pp'$, whereas the upper row is carried over from $p'$; Existing non-through blocks of $p$ on the lower row and, likewise, of $p'$ on the upper row are retained; The other blocks of $pp'$ are induced by the partition $s$ which is the least upper bound of, on the one hand, $p'$ restricted to its lower and, on the other hand, $p$ restricted to its upper row; Namely, for every block $B$ of $s$, the points from the upper row of $p'$ and the lower row of $p$ whose former blocks had a non-empty intersection with $B$ form a block in $pp'$.
\par
The \emph{color inversion} $\overline p$ of $p\in \mathcal P^{\circ\bullet}$ replaces $\circ$ with $\bullet$ and vice versa for all points.
\par
Reversing the total orders of both rows of $p\in \mathcal P^{\circ\bullet}$ produces the \emph{reflection} $\hat p$ of $p$. The color inversion of $\hat p$ is called the \emph{verticolor reflection} $\tilde p$ of $p$.
\par
Four basic kinds of \emph{rotation} can be defined: To obtain $p^\rcurvearrowdown$, we remove the leftmost point $\alpha$ on the upper row of $p\in \mathcal P^{\circ\bullet}$ and add a point $\beta$ of the opposite color of $\alpha$ to the lower row left to its leftmost point. The point $\beta$ replaces $\alpha$ as far as the blocks are concerned. Transferring the rightmost point of the upper row to the right end of the lower row gives the rotation $p^\lcurvearrowdown$. Analogously, $p^\lcurvearrowup$ and $p^\rcurvearrowup$ result from moving points up from the lower row, instead.
\par
Defining $p^\circlearrowright:=(p^\lcurvearrowup)^\lcurvearrowdown$ and  $p^\circlearrowleft:=(p^\rcurvearrowdown)^\rcurvearrowup$ yields clockwise and counter-clockwise \emph{cyclic rotations}.
\par
Given $p\in \mathcal P^{\circ\bullet}$ and a set $S$ of points in $p$, the \emph{erasing} $E(p,S)$ of $S$ from $p$ is obtained by removing $S$ and combining all blocks of $p$ which contained an element of $S$ into one new block.
\subsection{Categories}
\label{subsection:categories}
A \emph{category (of partitions)} is a subset of $\mathcal P^{\circ\bullet}$ which is closed under tensor products, compositions and involutions and contains the partitions $\PartIdenW$, $\PartIdenB$, $\PartIdenLoBW$ and $\PartIdenLoWB$. While categories are then also invariant under verticolor reflection as well as basic and cyclic rotations, this need not be the case for reflection and color inversion. For any set $\mc G\subseteq \Cp$, we denote the smallest category containing $\mc G$ by $\langle \mc G\rangle$ and call it \emph{the category generated by $\mc G$}. Categories of (uncolored) partitions were first introduced in \cite{BaSp09}. Note that the composition of uncolored pair partitions with the same amount of upper and lower points yields the multiplication in the Brauer algebra \cite{Br37} or the Temperley-Lieb algebra \cite{TeLi71} up to a scalar factor.

\subsection{Orientation}
On the points of $p\in \mathcal P^{\circ\bullet}$ with lower row $L$ and upper row $U$, a cyclic order, the \emph{orientation}, is defined by the condition that it concur with the total order $\leq _L$ on $L$, but with the exact reverse of the total order $\leq _U$ on $U$, that the minimum of $\leq_U$ be succeeded by the minimum of $\leq _L$ and that the maximum of $\leq_U$ be preceded by the maximum of $\leq_L$. Intervals with respect to this cyclic order are denoted by, e.g., $\left[\alpha,\beta\right]_p$,  $\left]\alpha,\beta\right[_p$, etc.\ for points $\alpha$ and $\beta$ in $p$. See \cite[Sect.~3.1]{MaWe18a}.

\subsection{Sectors}
Given a proper subset $S$ of the points of $p$ that can be written as an interval with respect to the cyclic order, we call the set containing exactly the first and last point of $S$ the \emph{boundary} $\partial S$ of $S$. In contrast, the set $\inte(S):=S\backslash \partial S$ is referred to as the \emph{interior} of $S$.
If $\partial S$ is a block of $p$, the set $S$ is called a \emph{sector} in $p$. The sectors $S'$ in $p$ with $S'\subseteq \inte(S)$ are the \emph{subsectors} of $S$. See \cite[Sect.~3.4]{MaWe18a}.

\subsection{Color Sum}
\label{subsection:color-sum}
Based on the native coloration, we define the \emph{normalized color} of any given point of a partition to congrue with the native color in the case of a lower point, but to be the inverse color of any upper point.
\par
The \emph{color sum} $\sigma_p$ of $p\in \mathcal P^{\circ\bullet}$ is the signed measure with density $1$ and $-1$ given to the normalized colors $\circ$ and $\bullet$ respectively.
The null sets of $\sigma_p$ we call \emph{neutral}.
\par
Every category of partitions is closed under the erasing of neutral intervals. A neutral interval of length $2$ is called a \emph{turn}. See \cite[Sect.~3.3]{MaWe18a}.

\subsection{Connectedness}
Two blocks $B$ and $B'$ in $p\in \mathcal P^{\circ\bullet}$ are said to \emph{cross} if there are four pairwise distinct points $\alpha,\beta\in B$ and $\gamma,\delta\in B'$ occurring in the order $(\alpha,\gamma,\beta,\delta)$ with respect to the orientation. If no two blocks cross in $p$, then we say that $p$ is \emph{non-crossing}, in short: $p\in \mathcal {NC}^{\circ\bullet}$ (see \cite{TaWe15a} for all subcategories of $\mc {NC}^{\circ\bullet}$).
\par
We call the blocks $B$ and $B'$ \emph{connected} if $B=B'$, if $B$ and $B'$ cross or if there are pairwise different blocks $B_1,\ldots,B_m$ in $p$ such that $B$ crosses $B_1$, such that $B_i$ crosses $B_{i+1}$ for every $i\in \mathbb N$ with $i<m$, and such that $B_m$ crosses $B'$.
\par
The classes of this equivalence relation are the \emph{connected components} of $p$. And we say that $p$ is \emph{connected} if it has only a single connected component. Erasing the complement of any connected component $S$ of $p$ yields the \emph{factor partition} of $S$.
See \cite[Sect.~3.2]{MaWe18a}.
\subsection{Pair Partitions with Neutral Blocks}
\label{subsection:pair-partitions-with-neutral-blocks}
We denote by $\Cppnb$ the set of all pair partitions all of whose blocks are neutral sets. Furthermore, denote by $\mc S_0$ the set of all $p\in \Cppnb$ such that $\sigma_p(S)=0$ for all sectors $S$ in $p$. See \cite[Sect.~4 and Main Thm.~1]{MaWe18a} for more on $\mc S_0$.
\section{\texorpdfstring{Definition of $\mathcal I_D$ and Set Relationships\\{[Main Theorem~\ref*{theorem:main_1}~{\normalfont\ref*{item:main_1-2}]}}}{Definition and Set Relationships}}
\label{section:definition}
To define the sets $\mc I_D$ which are the subject matter of this article, we introduce the notion of color distance. 

 \begin{definition}
   Let $p\in \Cppnb$ be arbitrary and let $\alpha$ and $\beta$ be points in $p$. We call
   \begin{align*}
     \delta_p(\alpha,\beta)\eqpd
     \begin{cases}
       \sigma_p(]\alpha,\beta[_p),&\text{if } \alpha \text{ and } \beta \text{ have different normalized colors},\\
       \sigma_p(]\alpha,\beta]_p),&\text{if }\alpha \text{ and } \beta \text{ have the same normalized color},
     \end{cases}
   \end{align*}
   the \emph{signed color distance from $\alpha$ to $\beta$ in $p$} and
   \begin{align*}
     d_p(\alpha,\beta)\eqpd |\delta_p(\alpha,\beta)|.
   \end{align*}
   the \emph{(absolute) color distance from $\alpha$ to $\beta$ in $p$}.
 \end{definition}
 While only the absolute color distance is required to define the sets $\mc I_D$, it is the signed color distance which enables the proofs.
The following lemma shows that, given $p\in \Cppnb$, the name \enquote{distance} is appropriately chosen for $\delta_p$ and $d_p$. Note that the set of all points of $p$, being the disjoint union of neutral blocks, is neutral as well.
\begin{lemma}
  \label{lemma:properties_of_signed_color_distance}
   Let  $\alpha$, $\beta$ and $\gamma$ be points in $p\in \Cppnb$.
   \begin{enumerate}[wide,label=(\alph*)]
   \item\label{item:properties_of_signed_color_distance-item_1} It holds $\delta_p(\alpha,\alpha)= 0$.
   \item\label{item:properties_of_signed_color_distance-item_2} It holds $\delta_p(\alpha,\beta)= -\delta_p(\beta,\alpha)$.
   \item\label{item:properties_of_signed_color_distance-item_3} It holds $\delta_p(\alpha,\gamma)= \delta_p(\alpha,\beta)+\delta_p(\beta,\gamma)$.
     \item\label{item:properties_of_signed_color_distance-item_4} The map $d_p$ is a pseudo-metric on the set of points of $p$.
   \end{enumerate}
 \end{lemma}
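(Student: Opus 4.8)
The plan is to deduce all four statements from one structural fact, namely that $\delta_p$ is a \enquote{coboundary}: there is a map $\psi$ from the set of points of $p$ to $\tfrac12\integers$ with
\[\delta_p(\alpha,\beta)=\psi(\beta)-\psi(\alpha)\]
for all points $\alpha,\beta$ of $p$. Once this is in hand, part~\ref{item:properties_of_signed_color_distance-item_1} is the case $\alpha=\beta$; part~\ref{item:properties_of_signed_color_distance-item_2} is $\psi(\beta)-\psi(\alpha)=-\bigl(\psi(\alpha)-\psi(\beta)\bigr)$; part~\ref{item:properties_of_signed_color_distance-item_3} is the telescoping $\psi(\gamma)-\psi(\alpha)=\bigl(\psi(\gamma)-\psi(\beta)\bigr)+\bigl(\psi(\beta)-\psi(\alpha)\bigr)$; and part~\ref{item:properties_of_signed_color_distance-item_4} then follows formally, since $d_p\geq 0$ by definition, $d_p(\alpha,\alpha)=0$ by~\ref{item:properties_of_signed_color_distance-item_1}, $d_p$ is symmetric by~\ref{item:properties_of_signed_color_distance-item_2}, and the triangle inequality is $d_p(\alpha,\gamma)=\lvert\delta_p(\alpha,\beta)+\delta_p(\beta,\gamma)\rvert\leq d_p(\alpha,\beta)+d_p(\beta,\gamma)$ by~\ref{item:properties_of_signed_color_distance-item_3}.

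To construct $\psi$, I would enumerate the points of $p$ as $x_1,\dots,x_N$ along the positive direction of the orientation---that is, cut the cyclic order open at some arbitrary place---set $\epsilon_k\eqpd\sigma_p(\{x_k\})\in\{+1,-1\}$ and $S_k\eqpd\epsilon_1+\dots+\epsilon_k$ with $S_0\eqpd 0$, and define $\psi(x_k)\eqpd S_{k-1}+\tfrac12\epsilon_k$. The one input from the hypothesis $p\in\Cppnb$ is the observation recorded just before the lemma: the full point set of $p$, being a disjoint union of neutral blocks, is neutral, so $S_N=0$. Using this to absorb the wrap-around, one checks that for any two \emph{distinct} points $\alpha=x_i$ and $\beta=x_j$ one has $\sigma_p(\,]\alpha,\beta]_p)=S_j-S_i$, and hence $\sigma_p(\,]\alpha,\beta[_p)=S_j-S_i-\epsilon_j=S_{j-1}-S_i$. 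It remains to match these with the two clauses defining $\delta_p$. If $\alpha$ and $\beta$ are distinct and have the same normalized color, then $\epsilon_i=\epsilon_j$ and $\delta_p(\alpha,\beta)=\sigma_p(\,]\alpha,\beta]_p)=S_j-S_i=S_{j-1}-S_{i-1}=\psi(\beta)-\psi(\alpha)$. If they have opposite normalized colors---so in particular $\alpha\neq\beta$---then $\epsilon_j=-\epsilon_i$ and $\delta_p(\alpha,\beta)=\sigma_p(\,]\alpha,\beta[_p)=S_{j-1}-S_i=S_{j-1}-S_{i-1}-\epsilon_i=\psi(\beta)-\psi(\alpha)$, the last equality because $-\epsilon_i=\tfrac12(\epsilon_j-\epsilon_i)$. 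Finally, if $\alpha=\beta$, the two colors agree trivially, the interval $]\alpha,\alpha]_p$ is the entire point set (traversed once around the cycle), which is neutral, so $\delta_p(\alpha,\alpha)=0=\psi(\alpha)-\psi(\alpha)$. This establishes the coboundary identity, and with it the whole lemma.

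The step I expect to be fiddly, and would write out most carefully, is the bookkeeping in the second paragraph: translating the cyclic intervals $]\alpha,\beta]_p$ and $]\alpha,\beta[_p$ into the partial sums $S_k$ with the wrap-around handled correctly---which is exactly where $S_N=0$, i.e.\ neutrality of the full point set, enters---and tracking the half-integer corrections $\tfrac12\epsilon_k$ so that they precisely compensate for the switch between the open interval (used when $\alpha$ and $\beta$ have opposite normalized colors) and the half-open interval (used when the colors agree) in the definition of $\delta_p$. One should also note in passing that $\psi$ itself is not canonical---cutting the cyclic order at a different point changes it by a global additive constant---but this is immaterial, since only the differences $\psi(\beta)-\psi(\alpha)$ ever occur.
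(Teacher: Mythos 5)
Your proof is correct and is, at its algebraic core, the same argument as the paper's: unfolding your $\psi(x_k)=S_{k-1}+\tfrac12\epsilon_k$ shows that $\psi(\beta)-\psi(\alpha)=\sigma_p(]\alpha,\beta]_p)+\tfrac12\bigl(\sigma_p(\{\alpha\})-\sigma_p(\{\beta\})\bigr)$, which is precisely the unified expression for $\delta_p$ that the paper derives and then manipulates to prove (b) and (c), and both arguments invoke neutrality of the full point set at the same spot (the paper via $\sigma_p(]\alpha,\beta]_p)=-\sigma_p(]\beta,\alpha]_p)$, you via $S_N=0$). Recasting the formula as a coboundary $\delta_p=\partial\psi$ makes the telescoping more visible, but it is a presentational rather than a substantive departure from the paper's proof.
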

 \begin{proof}
   \begin{enumerate}[wide,label=(\alph*)]
   \item The definition of $\delta_p$ yields $\delta_p(\alpha,\alpha)=\sigma_p(]\alpha,\alpha]_p)=0$.
   \item We can rewrite the definition of $\delta_p$ as
\[\delta_p(\alpha,\beta)=\sigma_p(]\alpha,\beta]_p)+\frac{1}{2}\left(\sigma_p(\{\alpha\})-\sigma_p(\{\beta\})\right).\]
Using $\sigma_p(]\alpha,\beta]_p)=- \sigma_p(]\beta,\alpha]_p)$ now proves the claim.
\item We compute, employing the formula for $\delta_p$ from the proof of Claim~\ref{item:properties_of_signed_color_distance-item_2},
  \begin{align*}
    \delta_p(\alpha,\beta)+\delta_p(\beta,\gamma)&=\sigma_p(]\alpha,\beta]_p)+\sigma_p(]\beta,\gamma]_p)\\ & \phantom{{}={}}+\frac{1}{2}\left(\sigma_p(\{\alpha\})-\sigma_p(\{\beta\})\right)+\frac{1}{2}\left(\sigma_p(\{\beta\})-\sigma_p(\{\gamma\})\right).
  \end{align*}
  Thus, from $\sigma_p(]\alpha,\beta]_p)+\sigma_p(]\beta,\gamma]_p)= \sigma_p(]\alpha,\gamma]_p)$ follows the claim.
  \item Claim~\ref{item:properties_of_signed_color_distance-item_4} is implied by the previous three.\qedhere
   \end{enumerate}
 \end{proof}
 \begin{remark}
   Without the assumption $p\in \Cppnb$,  Lemma~\hyperref[lemma:properties_of_signed_color_distance]{\ref*{lemma:properties_of_signed_color_distance}~\ref*{item:properties_of_signed_color_distance-item_1}--\ref*{item:properties_of_signed_color_distance-item_3}} remains true for arbitrary $p\in \Cp$ if we replace equality by congruence modulo $\Sigma(p)\eqpd \sigma_p(P_p)$, where $P_p$ denotes the set of all points of $p$.
 \end{remark}
For special partitions, color distance respects the block structure as the following lemma shows.
  \begin{lemma}
    \label{lemma:color_distance_well_definedness}
  Let $\{\alpha,\beta\}$ and $\{\alpha',\beta'\}$ be blocks in $p\in\Cppnb$. If $p\in \mathcal{S}_0$, then \[\delta_p(\alpha,\alpha')=\delta_p(\alpha,\beta')=\delta_p(\beta,\alpha')=\delta_p(\beta,\beta').\]
\end{lemma}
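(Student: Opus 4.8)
The plan is to reduce the assertion to a single fact: for $p\in\mathcal{S}_0$, the signed color distance between the two legs of \emph{any} block of $p$ vanishes. Granting this, all four claimed identities become a formal consequence of Lemma~\ref{lemma:properties_of_signed_color_distance}. Using additivity (part~\ref{item:properties_of_signed_color_distance-item_3}) I would write $\delta_p(\alpha,\beta')=\delta_p(\alpha,\alpha')+\delta_p(\alpha',\beta')$, $\delta_p(\beta,\alpha')=\delta_p(\beta,\alpha)+\delta_p(\alpha,\alpha')$ and $\delta_p(\beta,\beta')=\delta_p(\beta,\alpha')+\delta_p(\alpha',\beta')$. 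Since $\delta_p(\alpha',\beta')=0$ by the block fact applied to $\{\alpha',\beta'\}$, and $\delta_p(\beta,\alpha)=-\delta_p(\alpha,\beta)=0$ by part~\ref{item:properties_of_signed_color_distance-item_2} together with the block fact applied to $\{\alpha,\beta\}$, all three right-hand sides collapse to $\delta_p(\alpha,\alpha')$, so all four quantities coincide.

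It remains to establish the block fact, and this is where the hypothesis $p\in\mathcal{S}_0$ genuinely enters. Let $\{\gamma,\gamma'\}$ be a block of $p$. Because $p\in\Cppnb$, this block is neutral and of size two, so its two points carry the values $+1$ and $-1$ under $\sigma_p$, i.e.\ they have different normalized colors; hence, by definition, $\delta_p(\gamma,\gamma')=\sigma_p(]\gamma,\gamma'[_{p})$. I would then split the point set along the cyclic order as $P_p=\{\gamma,\gamma'\}\sqcup\,]\gamma,\gamma'[_{p}\,\sqcup\,]\gamma',\gamma[_{p}$. If $P_p=\{\gamma,\gamma'\}$, then $]\gamma,\gamma'[_{p}=\emptyset$ and there is nothing to prove. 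Otherwise at least one of the two closed arcs $[\gamma,\gamma']_{p}$, $[\gamma',\gamma]_{p}$ is a proper subset of $P_p$; its boundary is precisely the block $\{\gamma,\gamma'\}$, so it is a sector, and the defining property of $\mathcal{S}_0$ forces its $\sigma_p$-measure to be $0$. Subtracting the (vanishing) contribution $\sigma_p(\{\gamma\})+\sigma_p(\{\gamma'\})$ of the two endpoints, and using $\sigma_p(P_p)=0$ to pass between the two open arcs if the proper arc happened to be $[\gamma',\gamma]_{p}$, one concludes $\sigma_p(]\gamma,\gamma'[_{p})=0$, hence $\delta_p(\gamma,\gamma')=0$.

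The only mildly delicate point is the book-keeping in the last step: one must make sure the chosen arc is genuinely a \emph{proper} subset of $P_p$ (so that the notion of sector applies) and correctly relate the measure of an open arc to that of the corresponding closed arc via the endpoint values. Both are handled by the disjoint decomposition above, the neutrality of the entire point set (noted just before Lemma~\ref{lemma:properties_of_signed_color_distance}) and the neutrality of the block; everything else is a direct invocation of Lemma~\ref{lemma:properties_of_signed_color_distance}. I do not anticipate any real obstacle.
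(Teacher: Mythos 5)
Your proof is correct and mirrors the paper's: both reduce the four identities to the single fact that $\delta_p(\gamma,\gamma')=0$ whenever $\{\gamma,\gamma'\}$ is a block of $p\in\mathcal{S}_0$, then invoke parts~\ref{item:properties_of_signed_color_distance-item_2} and~\ref{item:properties_of_signed_color_distance-item_3} of Lemma~\ref{lemma:properties_of_signed_color_distance}. The paper simply asserts that this block fact is what $p\in\mathcal{S}_0$ means (pointing to Section~\ref{subsection:pair-partitions-with-neutral-blocks}), whereas you take the extra step of deriving it carefully from the sector formulation $\sigma_p(S)=0$; that verification is sound and just makes explicit what the paper leaves implicit.
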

\begin{proof}
  Follows from Lemma~\hyperref[item:properties_of_signed_color_distance-item_2]{\ref*{lemma:properties_of_signed_color_distance}~\ref*{item:properties_of_signed_color_distance-item_2}} and~\hyperref[item:properties_of_signed_color_distance-item_3]{\ref*{item:properties_of_signed_color_distance-item_3}} because $p\in \mc S_0$ means $\delta_p(\alpha,\beta)=\delta_p(\alpha',\beta')=0$, see Section~\ref{subsection:pair-partitions-with-neutral-blocks}.
\end{proof}
Hence, for $p\in \mc S_0$, we can actually regard $\delta_p$ and $d_p$ as defining color distances not only for points but also for blocks. 

\begin{definition}
  \label{definition:categories_I_N}
  Let $B$ and $B'$ be two blocks in $p\in \mathcal{S}_0$. We call \[\delta_p(B,B'):=\delta_p(\alpha,\alpha') \quad\text{and}\quad d_p(B,B')\eqpd |\delta_p(B,B')|,\] where $\alpha\in B$, $\alpha'\in B'$, the \emph{signed} respectively \emph{(absolute) color distance} from $B$ to $B'$.
\end{definition}
The properties of the signed and absolute color distance of points from Lemma~\ref{lemma:properties_of_signed_color_distance} carry over to the signed and absolute color distance of blocks.
\par
Now, we are in a position to define the sets $\mathcal I_D$ from the \hyperref[theorem:main_1]{Main Theorems}.
\begin{definition}
  \label{definition:categories_I_N}
  For every subsemigroup $D$ of $(\mathbb N_0,+)$, denote by $\mathcal I_D$ the set of all partitions $p\in \mathcal S_0$ such that, for all blocks $B$ and $B'$ in $p$, whenever $d_p(B,B')\in D$, the blocks $B$ and $B'$ do not cross each other in $p$.
\end{definition}
\begin{minipage}[c]{0.3\textwidth}
\flushleft
\begin{tikzpicture}[scale=0.666, baseline =6.5cm*0.666]
    \useasboundingbox (-0.75, -1.75) rectangle (5.75,6.5);
    \draw[dotted] (-0.5,0) -- (5.5,0);
    \draw[dotted] (-0.5,5) -- (5.5,5);
    \node [circle, scale=0.4, draw=black, fill=white, thick] (x0) at (0,0) {};
    \node [circle, scale=0.4, draw=black, fill=white] (x1) at (1,0) {};
    \node [circle, scale=0.4, draw=black, fill=white, thick] (x2) at (2,0) {};
    \node [circle, scale=0.4, draw=black, fill=black] (x3) at (3,0) {};
    \node [circle, scale=0.4, draw=black, fill=black] (x4) at (4,0) {};
    \node [circle, scale=0.4, draw=black, fill=black, thick] (x5) at (5,0) {};
    \node [circle, scale=0.4, draw=black, fill=white] (y0) at (0,5) {};
    \node [circle, scale=0.4, draw=black, fill=white] (y1) at (1,5) {};
    \node [circle, scale=0.4, draw=black, fill=white, thick] (y2) at (2,5) {};
    \node [circle, scale=0.4, draw=black, fill=black] (y3) at (3,5) {};
    \node [circle, scale=0.4, draw=black, fill=black] (y4) at (4,5) {};
    \node [circle, scale=0.4, draw=black, fill=black] (y5) at (5,5) {};
    \draw[thick] (x0) -- ++ (0,2) -| (x5);
    \draw (y0) -- ++ (0,-2) -| (y5);
    \draw (x1) -- ++ (0,1) -| (x4);
    \draw (y1) -- ++ (0,-1) -| (y4);    
    \draw[thick] (x2) -- (y2);
    \draw (x3) -- (y3);
    \node[below = 0.36cm of x0] (l1) {$B$};
    \node[above = 0.36cm of y2] (l2) {$B'$};
    \draw[->] (l1.north) -- ($(x0.south)+(0,-0.125cm)$);
    \draw[->] (l2.south) -- ($(y2.north)+(0,0.125cm)$);
    \draw[densely dotted] ($(x3)+(-0.3,-0.3)$) -- ++ (0,-0.3)   -| ($(x4)+(0.3,-0.3)$) node [below,pos=0.325] {$\delta_p(B,B')$}; 
  \end{tikzpicture}\par
  \end{minipage}
  \begin{minipage}[c]{0.7\textwidth}
    \setlength\parindent{15pt}\fussy
    \indent For example, the two crossing blocks $B$ and $B'$ in the partition on the left hand side
    have color distance $2$. All color distances occurring between blocks in the partition are $0$, $1$ or $2$. There are also crossings between blocks of distance $1$, but no blocks with color distance $0$ cross each other, making the partition an element of $\mc I_{\mathbb N_0\backslash\{1,2\}}$.
    \par
    In comparison, all three blocks in the partition $\PartHalfLibWBW\in \mc I_{\pint}$ have color distance $0$, which is why this partition is not an element of $\mc I_{\mathbb N_0\backslash\{1,2\}}$.
  \end{minipage}
  \par
Part~\ref{item:main_1-2} of Main Theorem~\ref{theorem:main_1} follows immediately from Definition~\ref{definition:categories_I_N}.  
\begin{proposition}
  \label{proposition:set_relations}
  \begin{enumerate}[label=(\alph*),labelwidth=!]
  \item\label{item:set_relations-item_1} It holds $\mc I_\nnint=\mc{NC}^{\circ\bullet}\cap \Cppnb=\langle\emptyset\rangle$.
  \item\label{item:set_relations-item_2} It holds $\mc I_\emptyset=\mc S_0$.
  \item\label{item:set_relations-item_3} For all subsemigroups $D,D'$ of $(\nnint,+)$ holds
    \begin{align*}
      D\subseteq D'\implies \mc I_{D}\supseteq \mc I_{D'}.
    \end{align*}
  \end{enumerate}
\end{proposition}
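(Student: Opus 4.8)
The plan is to read parts~\ref{item:set_relations-item_2} and~\ref{item:set_relations-item_3} straight off the definition of $\mc I_D$ and to put the work into part~\ref{item:set_relations-item_1}. For part~\ref{item:set_relations-item_3}: if $D\subseteq D'$ and $p\in\mc I_{D'}$, then any two blocks $B,B'$ of $p$ with $d_p(B,B')\in D$ satisfy $d_p(B,B')\in D'$ as well, hence do not cross in $p$, so $p\in\mc I_D$. For part~\ref{item:set_relations-item_2}: the condition ``$d_p(B,B')\in\emptyset$'' triggering the crossing restriction in the definition of $\mc I_\emptyset$ is never satisfied, so that restriction is vacuous and $\mc I_\emptyset=\mc S_0$.

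For part~\ref{item:set_relations-item_1} I would proceed in three steps. Step one: since $\sigma_p$ is integer-valued, $d_p(B,B')=|\delta_p(B,B')|\in\nnint$ for every pair of blocks of any $p\in\mc S_0$; thus the hypothesis ``$d_p(B,B')\in\nnint$'' in the definition of $\mc I_\nnint$ always holds, so $\mc I_\nnint$ is precisely the set of $p\in\mc S_0$ in which no two blocks cross, that is, $\mc I_\nnint=\mc S_0\cap\mc{NC}^{\circ\bullet}$. Step two: $\mc S_0\cap\mc{NC}^{\circ\bullet}=\Cppnb\cap\mc{NC}^{\circ\bullet}$; here $\subseteq$ holds because $\mc S_0\subseteq\Cppnb$ by definition, while for $\supseteq$ one takes $p\in\Cppnb$ non-crossing and an arbitrary sector $S$ of $p$ with boundary block $\partial S=\{\alpha,\beta\}$: as no block crosses $\{\alpha,\beta\}$, every block of $p$ meeting $\inte(S)=\,]\alpha,\beta[_p$ is contained in $\inte(S)$, so $\inte(S)$ is a disjoint union of full, hence neutral, blocks of $p$; therefore $\sigma_p(\inte(S))=0$, and adding $\sigma_p(\partial S)=\sigma_p(\{\alpha,\beta\})=0$ yields $\sigma_p(S)=0$, so $p\in\mc S_0$.

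Step three: $\Cppnb\cap\mc{NC}^{\circ\bullet}=\langle\emptyset\rangle$. Since $\mc{NC}^{\circ\bullet}$ and $\Cppnb$ are categories (see \cite{TaWe15a,MaWe18a}), so is their intersection, which being a category contains the minimal category $\langle\emptyset\rangle$; for the reverse inclusion I would rotate a given $p\in\Cppnb\cap\mc{NC}^{\circ\bullet}$ onto a single row and induct on the number of points, using that a non-empty non-crossing pairing always has two cyclically neighbouring points forming a block, which by neutrality is colored $\circ\bullet$ or $\bullet\circ$, hence can be split off as a tensor factor $\PartIdenLoWB$ or $\PartIdenLoBW$ and then erased, so that $p$ is built from the identities and turns alone. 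This last point is the Schur--Weyl description of the free unitary quantum group and is essentially contained in \cite{TaWe15a}; I expect the only mildly delicate part of the whole argument to be the bookkeeping there, namely keeping track of the nesting of the turns while reassembling $p$ from $\PartIdenW$, $\PartIdenB$, $\PartIdenLoBW$ and $\PartIdenLoWB$ — everything else being a direct unwinding of the definitions of color distance, sector and $\mc I_D$, plus the fact (from \cite{MaWe18a}) that the category operations preserve membership in $\Cppnb$ and $\mc S_0$.
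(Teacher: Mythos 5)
Your proposal is correct and follows essentially the same route as the paper's proof: parts~(b) and~(c) are read off the definition exactly as in the paper, and for part~(a) you derive $\mc I_\nnint=\mc S_0\cap\mc{NC}^{\circ\bullet}=\Cppnb\cap\mc{NC}^{\circ\bullet}$ by the same sector argument, after which the identification with $\langle\emptyset\rangle$ is outsourced to the same place (the paper simply cites \cite[Prop.~3.3~a)]{TaWe15a}, which is precisely the induction on cyclically neighbouring turns you sketch).
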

\begin{proof}
  \begin{enumerate}[label=(\alph*),labelwidth=!, wide]
  \item If blocks $B$ and $B'$ in $p\in \mc S_0$ may only cross if $d(B,B')\notin \nnint$,  then $p$ must be non-crossing.  Conversely, recognize that $\mc{NC}^{\circ\bullet}\cap \Cppnb\subseteq \mc S_0$ because $\inte(S)$ must be a subpartition for every sector $S$ in a non-crossing $p\in \Cppnb$, implying $\sigma_p(S)=0$. It was shown in \cite[Proposition~3.3~a)]{TaWe15a} that $\mc{NC}^{\circ\bullet}\cap \Cppnb=\langle\emptyset\rangle$.
    \item Likewise, the condition that blocks $B$ and $B'$ in $p\in \mc S_0$ are forbidden from crossing unless $d(B,B')\in\nnint$ is no restriction at all. So, $\mc I_\emptyset=\mc S_0$.
    \item If blocks $B$ and $B'$ may not cross in $p\in \mc S_0$ if $d(B,B')\in D'$ and if $D\subseteq D'$, then, especially, they cannot cross if $d(B,B')\in D$.\qedhere
  \end{enumerate}
\end{proof}
\section{\texorpdfstring{Category Property of $\mathcal I_D$\\{[Main Theorem~\ref*{theorem:main_1}~{\normalfont\ref*{item:main_1-1}]}}}{Category Property}}
  In the following, it is convenient to gather together all the color distances which occur between crossing blocks in a given partition.
  \begin{definition}
    \label{definition:A}
    For all $p\in \mc S_0$ define
    \begin{align*}
      A(p)\eqpd \{d_{p}(B,B')\mid B,B'\text{ crossing blocks in }p\}.
    \end{align*}
  \end{definition}
  This notation can be used to express membership in one of the sets $\mc I_D$ from Definition~\ref{definition:categories_I_N} more compactly:
  \begin{remark}
    \label{remark:I_N-with-A}
    For all subsemigroups $D$ of $(\nnint,+)$ holds
    \begin{align*}
      \mc I_D= \{p\in \mc S_0\mid A(p)\subseteq \nnint\backslash D\}.
    \end{align*}
  \end{remark}
The next lemma shows how the map $A$ behaves under category operations.
  \begin{lemma}
    \label{lemma:A-under-operations}
    Let $p,p'\in \mc S_0$ be arbitrary.
    \begin{enumerate}[label=(\alph*)]
    \item It holds $A(p^*)=A(p)$.
    \item It holds $A(p\otimes p')= A(p)\cup A(p')$.
      \item If $(p,p')$ is composable, then $A(pp')\subseteq A(p)\cup A(p')$.
    \end{enumerate}
  \end{lemma}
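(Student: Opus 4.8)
The plan is to prove the three parts of Lemma~\ref{lemma:A-under-operations} in turn, in each case tracking how crossing pairs of blocks and the color distances between them are affected by the respective operation.

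\medskip

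\noindent\textbf{Part (a), the involution.} The involution $p^*$ exchanges the two rows, which amounts to applying a half-turn to the graphical picture; this reverses the orientation cycle. Since ``crossing'' is defined via the existence of four points in cyclic order $(\alpha,\gamma,\beta,\delta)$, and reversing the cyclic order turns this into $(\delta,\beta,\gamma,\alpha)$ --- still an alternating pattern --- two blocks cross in $p$ if and only if the corresponding blocks cross in $p^*$. It remains to check that the color distance is preserved: the normalized color of a point is its native color if lower and the inverse if upper, so swapping rows simultaneously swaps the native color convention in a way that leaves normalized colors unchanged; hence $\sigma_{p^*}$ corresponds to $\sigma_p$ under the row swap, and reversing the orientation sends the interval $]\alpha,\beta[_p$ to $]\beta,\alpha[_{p^*}$, whose $\sigma$-measure is the negative. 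So $\delta_{p^*}=-\delta_p$ up to the identification of points, and therefore $d_{p^*}=d_p$ on corresponding pairs. Thus $A(p^*)=A(p)$. (Alternatively, one invokes that $p\mapsto p^*$ is an orientation-reversing symmetry already recorded in \cite{MaWe18a}, so that all the signed-color-distance data is merely negated.)

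\medskip

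\noindent\textbf{Part (b), the tensor product.} In $p\otimes p'$ the points split into those coming from $p$ and those coming from $p'$, and crucially every block of $p\otimes p'$ lies entirely in one of the two parts --- tensoring places $p'$ to the right of $p$ in both rows without merging any blocks. Because $p,p'\in\mc S_0\subseteq\Cppnb$ have all blocks neutral, the set of $p$-points is neutral inside $p\otimes p'$, and likewise the set of $p'$-points; therefore the cyclic order on $p\otimes p'$ decomposes into the cyclic order on the $p$-part followed by the cyclic order on the $p'$-part, with no ``interleaving'' of the parts relevant to crossings. Consequently: (i) a block of $p$ never crosses a block of $p'$ (one part would have to lie inside an interval of the other, contradicting neutrality witnessing that the four alternating points cannot occur); (ii) two $p$-blocks cross in $p\otimes p'$ iff they cross in $p$, and the intermediate intervals have the same $\sigma$-measure (the $p'$-part is neutral and can be ignored), so color distances agree; (iii) symmetrically for $p'$-blocks. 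Hence $A(p\otimes p')=A(p)\cup A(p')$. I would spell out (i)--(iii) using the neutrality of each part and the additivity from Lemma~\ref{lemma:properties_of_signed_color_distance}\ref{item:properties_of_signed_color_distance-item_3}.

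\medskip

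\noindent\textbf{Part (c), the composition.} This is the main obstacle. In $pp'$ the lower row of $p$ and the upper row of $p'$ survive, the shared middle row disappears, and blocks get merged according to the least-upper-bound partition $s$ of the two middle restrictions. I would argue as follows. Let $B,B'$ be crossing blocks in $pp'$; I want to produce crossing blocks in $p$ or in $p'$ realizing the same color distance. The key point is that each block of $pp'$ is built from a ``chain'' of blocks in $p$ and $p'$ linked through the middle row; choose points $\alpha\in B$, $\alpha'\in B'$ realizing $d_{pp'}(B,B')$ and witnessing the crossing together with their partners $\beta\in B$, $\gamma\in B'$ occurring in alternating cyclic order. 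These four points live either on the bottom row (from $p$) or the top row (from $p'$). Using that $p,p'\in\mc S_0$ and the well-definedness of color distance on blocks (Lemma~\ref{lemma:color_distance_well_definedness}), the signed color distance between $B$ and $B'$ can be computed using any representatives, in particular one can ``lift'' to representatives lying in a single one of $p$ or $p'$; the neutrality of all blocks means that erasing the middle row does not change $\sigma$-measures of the relevant intervals once one restricts attention to, say, the $p$-part or the $p'$-part. One then checks that if the alternating four-point pattern persists after this lift, the crossing is witnessed already in $p$ or in $p'$ with the same color distance, giving $d_{pp'}(B,B')\in A(p)\cup A(p')$. The delicate case is when the four witnessing points do \emph{not} all lie in the same factor; here I would use the structure of the composition --- that merged blocks arise from connected chains through the middle row, that intervals of $pp'$ pull back to unions of intervals in $p$ and $p'$ up to neutral pieces, and that crossings cannot be ``created out of nothing'' by the merging (a crossing in $pp'$ forces a crossing in at least one factor) --- to reduce to the same-factor case. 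I expect this reduction, keeping careful track of the cyclic-order bookkeeping across the three rows, to be the technical heart of the argument; the signed color distance (rather than merely the absolute one) is exactly the tool that makes the pull-back additive and hence controllable, as the authors note in the remark following the definition of $d_p$.
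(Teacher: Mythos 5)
Parts (a) and (b) are essentially correct and follow the paper's argument. One small slip in (a): under the involution the normalized colors \emph{flip} (a formerly-lower point is now upper and vice versa), so $\sigma_{p^*}=-\sigma_p$ pointwise; combined with the reversal of the cyclic order this gives $\delta_{p^*}=\delta_p$ (not $-\delta_p$). Since only $|\delta|$ enters $A(p)$, your conclusion stands, but the intermediate claim that normalized colors are unchanged is false. Part (b) matches the paper: neutrality of each tensor factor prevents cross-factor crossings and preserves intra-factor color distances.

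Part (c) has a genuine gap: what you give is a plan, not a proof, and the step you defer is precisely the technical heart. You correctly identify the chain structure of a block of $pp'$ (alternating blocks $A_1,A_1',A_2,A_2',\dots$ of $p$ and $p'$ linked through the middle row) and that signed color distance is the right tool, but you never establish the crucial quantitative fact: that $\delta_{pp'}(A,B)$ equals $\delta_p(A_i,B_j)$ and $\delta_{p'}(A_i',B_j')$ for \emph{every} $i,j$. Your appeal to Lemma~\ref{lemma:color_distance_well_definedness} does not deliver this, because that lemma is about blocks of a single partition, whereas here you must compare distances computed in the three different partitions $pp'$, $p$, and $p'$. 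The paper's proof closes this gap by an inductive application of Lemma~\hyperref[item:properties_of_signed_color_distance-item_3]{\ref*{lemma:properties_of_signed_color_distance}~\ref*{item:properties_of_signed_color_distance-item_3}}, using that all sectors of all the chain blocks are neutral (since $p,p'\in\mc S_0$) to show $\delta_p(A_{i_1},A_{i_2})=\delta_{p'}(A'_{i_1},A'_{i_2})=0$ (and likewise for the $B$-chain), from which the distance transfer follows. You also leave unaddressed why a crossing of $A$ and $B$ in $pp'$ forces a crossing of some $A_i,B_j$ in $p$ or some $A_i',B_j'$ in $p'$, and you omit the reduction — via $(pp')^*=(p')^*p^*$ — that lets one assume both blocks meet the lower row, which is what makes the through-block case the representative one. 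Until the distance-transfer lemma and the crossing-propagation claim are actually proven, (c) is not established.
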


\begin{proof}

  \begin{enumerate}[wide,label=(\alph*)]
  \item 
 Exchanging the roles of the upper and the lower row of $p$ does not affect color distances: Both the sign of the color sum measure and the cyclic order effectively reverse and the two effects cancel each other. Hence, $A(p^*)=A(p)$.\par
\item  On the one hand, no crossings exist between the two subpartitions of $p\otimes p'$ corresponding to $p$ and to $p'$ respectively. So all crossings in $p\otimes p'$ stem from crossings either in $p$ or in $p'$. On the other hand, the color distances between crossings from $p$ and $p'$ are unaltered when passing to $p\otimes p'$ because the subpartitions of $p\otimes p'$ resulting from $p$ and $p'$ are neutral as a whole due to $p,p'\in \mc S_0$.\par
\item  Let $A$ and $B$ be two blocks crossing each other in $pp'$. By passing to $(pp')^*=(p')^*p^*$ if necessary, we can assume that both $A$ and $B$ intersect the lower row, i.e.\ each have a point in common with the set of all lower points. We treat the case of both $A$ and $B$ being through blocks. The other cases are similar.\par
  In that situation, there exist $m,n\in \mathbb N$ and sequences $A_1,\ldots,A_m,B_1,\ldots,B_n$ of blocks in $p$ and $A_1',\ldots,A_m',B_1',\ldots,B_n'$ of blocks in $p'$ such that the following conditions are met: Block $A$ intersects both $A_1$ and $A_m'$; Block $B$ intersects both $B_1$ and $B_n'$; If we identify the lower row of $p'$ with the upper row of $p$, then, for all $i,j\in \mathbb N$ with $i< m$ and $j< n$, in each of the following four pairs of blocks the two blocks intersect each other: $(A_i,A_i')$, $(A_i',A_{i+1})$, $(B_j,B_j')$ and $(B_j',B_{j+1})$.
  \par
  \vspace{-0.25em}
  \begin{center}
    \begin{tikzpicture}[scale=0.666, baseline = 4cm*0.666]
      \draw [dotted] (-0.5,1) -- (6.5,1);
      \draw [dotted] (-0.5,4) -- (7.5,4);
      \draw [dotted] (-0.5,7) -- (5,7);
      \node [circle, scale=0.4, draw=black, fill=black] (x1) at (0.5,1) {};
      \node [circle, scale=0.4, draw=black, fill=black] (x2) at (3.5,7) {};
      \node [circle, scale=0.4, draw=black, fill=black] (a1) at (0.5,4) {};
      \node [circle, scale=0.4, draw=black, fill=white] (a2) at (2.5,4) {};
      \node [circle, scale=0.4, draw=black, fill=black] (a3) at (3.5,4) {};      
      \node [circle, scale=0.4, draw=gray, fill=white] (y1) at (4.5,1) {};
      \node [circle, scale=0.4, draw=gray, fill=white] (y2) at (1.5,7) {};
      \node [circle, scale=0.4, draw=gray, fill=white] (b1) at (4.5,4) {};
      \node [circle, scale=0.4, draw=gray, fill=gray] (b2) at (5.5,4) {};
      \node [circle, scale=0.4, draw=gray, fill=white] (b3) at (1.5,4) {};                  
      \draw (x1) -- (a1);
      \draw (a1) -- ++(0,1)-| (a2);
      \draw (a2) -- ++(0,-1)-| (a3);
      \draw (a3) -- (x2);
      \draw[gray] (y1) -- (b1);
      \draw[gray] (b1) -- ++(0,1)-| (b2);
      \draw[gray] (b2) -- ++(0,-2)-| (b3);
      \draw[gray] (b3) -- (y2);
      \node at (-1.5,2.5) {$p$};
      \node at (-1.5,5.5) {$p'$};
      \node at (0,2.5) {$A_1$};
      \node at (0.5,5.5) {$A_1'$};
      \node at (3,3.5) {$A_2$};
      \node at (4,5.5) {$A_2'$};
      \node[gray] at (4,3) {$B_1$};
      \node[gray] at (5.5,5.5) {$B_1'$};
      \node[gray] at (2.5,1.5) {$B_2$};
      \node[gray] at (2,6) {$B_2'$};
      \node [below =2pt of x1] {$\alpha$};
      \node [below =2pt of y1] {$\beta'$};
      \node [above =2pt of x2] {$\beta$};
      \node [above =2pt of y2] {$\alpha'$};
    \end{tikzpicture}\quad\quad
        \begin{tikzpicture}[scale=0.666,baseline=1.5cm*0.666]
      \draw [dotted] (-0.5,0) -- (6.5,0);
      \draw [dotted] (-0.5,3) -- (5,3);
      \node [circle, scale=0.4, draw=black, fill=black] (x1) at (0.5,0) {};
      \node [circle, scale=0.4, draw=black, fill=black] (x2) at (3.5,3) {};
      \node [circle, scale=0.4, draw=gray, fill=white] (y1) at (4.5,0) {};
      \node [circle, scale=0.4, draw=gray, fill=white] (y2) at (1.5,3) {};
      \draw (x1) -- (x2);
      \draw[gray] (y1) -- (y2);
      \node at (-2,1.5) {$=$};      
      \node at (7.5,1.5) {$pp'$};
      \node at (1.25,1.5) {$A$};
      \node[gray] at (3.75,1.5) {$B$};
      \node [below =2pt of x1] {$\alpha$};
      \node [below =2pt of y1] {$\beta'$};
      \node [above =2pt of x2] {$\beta$};
      \node [above =2pt of y2] {$\alpha'$};
    \end{tikzpicture}
  \end{center}
    \vspace{-0.25em}
  \par
 The fact that all the sectors of all the blocks $A_1,A_1',\ldots,A_m,A_m',B_1,B_1',\ldots,B_n,B_n'$ are neutral can be used to prove by induction with the help of Lemma~\hyperref[item:properties_of_signed_color_distance-item_3]{\ref*{lemma:properties_of_signed_color_distance}~\ref*{item:properties_of_signed_color_distance-item_3}} first
  \begin{align*}
    0=\delta_{p}(A_{i_1},A_{i_2})=\delta_{p'}(A'_{i_1},A'_{i_2})=\delta_{p}(B_{j_1},B_{j_2})=\delta_{p'}(B'_{j_1},B'_{j_2})
  \end{align*}
for all $i_1,i_2,j_1,j_2\in \pint$ with $i_1,i_2\leq m$ and  $j_1,j_2\leq n$,
and thus
  \begin{align*}
    \delta_{pp'}(A,B)=\delta_{p}(A_i,B_j)=\delta_{p'}(A'_i,B_j')
  \end{align*}
  for all $i,j\in \mathbb N$ with $i\leq m$ and $j\leq n$. Because $A$ and $B$ cross in $pp'$, there must exist $i,j\in \mathbb N$ such that $A_i$ and $B_j$ cross in $p$ or $A_i'$ and $B_j'$ cross in $p'$. That means $d_{pp'}(A,B)\in A(p)\cup A(p')$, which concludes the proof.\qedhere
    \end{enumerate}
  \end{proof}
  We employ for all sets $X$ and $Y$, all maps $f:X\to Y$ and all subsets $S\subseteq X$  the notation $f(S)\eqpd \{f(x)\mid x\in S\}$. Moreover, for all systems $X$ of sets use  $\bigcup X\eqpd\bigcup_{Y\in X}Y$.
    \begin{remark}
    \label{lemma:invariance}
    For all sets $\mc S\subseteq \mc S_0$ holds $\bigcup A(\langle \mc S\rangle)= \bigcup A(\mc S)$.
  \end{remark}
  \begin{proof}
    The set $\mc C\eqpd \{p\in \mc S_0\mid A(p)\subseteq \bigcup A(\mc S)\}$ satisfies $\mc S\subseteq \mc C$. Because $\mc C$ is a category by Lemma~\ref{lemma:A-under-operations}, we conclude $\langle \mc S\rangle \subseteq \mc C$. That implies $\bigcup A(\langle \mc S\rangle )\subseteq \bigcup A(\mc C)\subseteq \bigcup A(\mc S)$.
  \end{proof}
  
Lemma~\ref{lemma:A-under-operations} is the key to proving Part~\ref{item:main_1-1} of Main Theorem~\ref{theorem:main_1}.
\begin{proposition}
  \label{theorem:category_property_of_I_N}
  For every subsemigroup $D$ of $(\mathbb N_0,+)$, the set $\mathcal I_D$ is a category of partitions.
\end{proposition}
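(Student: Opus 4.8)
The plan is to derive the category property of $\mathcal I_D$ entirely from the characterization $\mathcal I_D=\{p\in\mc S_0\mid A(p)\subseteq\nnint\backslash D\}$ in Remark~\ref{remark:I_N-with-A}, the behaviour of the map $A$ under the category operations recorded in Lemma~\ref{lemma:A-under-operations}, and the fact — established in \cite{MaWe18a} — that $\mc S_0$ is itself a category. The point is that a partition lies in $\mathcal I_D$ precisely when two independent conditions hold: membership in $\mc S_0$, and the constraint $A(p)\subseteq\nnint\backslash D$. The first is preserved by all operations because $\mc S_0$ is a category; the second is preserved because $A$ only ever decreases (or stays fixed) under involution, tensor product and composition.

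First I would dispatch the generating partitions. By Proposition~\ref{proposition:set_relations}~\ref{item:set_relations-item_1} and~\ref{item:set_relations-item_3} we have $\langle\emptyset\rangle=\mc I_\nnint\subseteq\mc I_D$ for every subsemigroup $D$, and $\langle\emptyset\rangle$ contains $\PartIdenW$, $\PartIdenB$, $\PartIdenLoBW$, $\PartIdenLoWB$ by the definition of a category; hence these four partitions belong to $\mathcal I_D$. (One could equally note directly that they are non-crossing elements of $\mc S_0$, so that $A(p)=\emptyset$.) Then I would verify closure under the three operations. Let $p,p'\in\mc I_D$. For the involution, $p^*\in\mc S_0$ since $\mc S_0$ is a category, and $A(p^*)=A(p)\subseteq\nnint\backslash D$ by the first part of Lemma~\ref{lemma:A-under-operations}, so $p^*\in\mc I_D$. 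For the tensor product, $p\otimes p'\in\mc S_0$ and $A(p\otimes p')=A(p)\cup A(p')\subseteq\nnint\backslash D$ by the second part, so $p\otimes p'\in\mc I_D$. For the composition, when $(p,p')$ is composable, $pp'\in\mc S_0$ and $A(pp')\subseteq A(p)\cup A(p')\subseteq\nnint\backslash D$ by the third part, so $pp'\in\mc I_D$. Together with the previous paragraph this shows $\mathcal I_D$ is a category.

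I do not expect a genuine obstacle here: the substantive work has already been done in Lemma~\ref{lemma:A-under-operations}, whose composition bound rested on the additivity of signed color distance (Lemma~\ref{lemma:properties_of_signed_color_distance}~\ref{item:properties_of_signed_color_distance-item_3}) and the inductive block-chasing through the two layers of $pp'$. Relative to that, the present proposition is a formal consequence. The only bookkeeping point worth emphasizing is that one must invoke that $\mc S_0$ is closed under the operations — so that $A(p^*)$, $A(p\otimes p')$, $A(pp')$ are even defined and so that the resulting partitions still satisfy the sector-neutrality condition — which is exactly the category property of $\mc S_0$ from \cite{MaWe18a}.
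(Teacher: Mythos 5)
Your proof is correct and follows the same route as the paper's: reduce to the description of $\mathcal I_D$ in Remark~\ref{remark:I_N-with-A} and then invoke Lemma~\ref{lemma:A-under-operations} for each of the three operations. You are merely more explicit than the published proof in two respects — checking that the four base partitions lie in $\mathcal I_D$ via $\langle\emptyset\rangle=\mathcal I_\nnint\subseteq\mathcal I_D$, and noting that $p^*$, $p\otimes p'$, $pp'$ remain in $\mathcal S_0$ because $\mathcal S_0$ is itself a category — both of which the paper leaves implicit.
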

\begin{proof}
  For $p,p'\in \mc I_D$ holds $A(p)\cup A(p')\subseteq \nnint\backslash D$ by Remark~\ref{remark:I_N-with-A}. Lemma~\ref{lemma:A-under-operations} hence proves $A(p^*)=A(p)\subseteq \nnint \backslash D$, $A(p\otimes p')= A(p)\cup A(p')\subseteq \nnint \backslash D$ and, if $(p,p')$ is composable, $A(pp')\subseteq  A(p)\cup A(p')\subseteq \nnint \backslash D$.

\end{proof}
Note that Lemma~\ref{lemma:invariance} gives no clue as to which sets $\bigcup A(\mc C)$ actually occur for categories $\mc C\subseteq \mc S_0$. The fact that only subsemigroups of $(\nnint,+)$ are possible requires an entirely different argument, to be given in the subsequent sections.

\section{Reminder on Brackets}
\label{section:brackets}
We recall the  definitions and results from \cite[Sect.~6]{MaWe18a} about \emph{bracket} partitions  required in the subsequent sections of this article. With the help of brackets we will be able to give explicit generators of the categories $\mc I_D$ and classify all subcategories of $\mc  S_0$.
\subsection{Brackets}
All categories $\mc C$ with $\mc C\subseteq \Cppnb$ will in fact be described solely in terms of the classes of the following equivalence relation.
\begin{definition}{\normalfont\cite[Def.~6.2]{MaWe18a}}
	Given $p,p'\in\mathcal P^{\circ\bullet}_{2,\mathrm{nb}}$ and sectors $S$ in $p$ and $S'$ in $p'$, we number the points in $\inte (S)$ and $\inte (S')$ with respect to the cyclic order. We say that $(p,S)$ and $(p',S')$ are \emph{equivalent} if the following four conditions are met:
	\begin{enumerate}
		\item The sectors $S$ and $S'$ are of equal size.
		\item The same normalized colors occur in the same order in $S$ and $S'$.
		\item For all $i$, the $i$-th point of $S$ belongs to a block crossing $\partial S$ in $p$ if and only if the $i$-th point of $S'$ belongs to a block crossing $\partial S'$ in $p'$. 
		\item For all $i,j$, the $i$-th and $j$-th points of $S$ form a block in $p$ if and only if the $i$-th and $j$-th points of $S'$ form a block in $p'$.
                \end{enumerate}
                In other words: $p$ restricted to $S$ coincides with $p'$ restricted to $S'$ (up to rotation). 
\end{definition}
Particular representatives of the equivalence classes are \emph{bracket} partitions.
\par
\vspace{0.25cm}
\noindent
       \begin{minipage}[c]{0.666\textwidth}
         \begin{definition}{\normalfont\cite[Def.~6.1]{MaWe18a}}
We call $p\in\mathcal P^{\circ\bullet}_{2,\mathrm{nb}}$ a \emph{bracket} if $p$ is projective, i.e.\ $p=p^*$ and $p^2=p$, and if the lower row of $p$ is a sector in $p$. 
\end{definition}
\begin{definition}{\normalfont\cite[Def.~6.3]{MaWe18a}}
	Let $S$ be a sector in $p\in\mathcal P^{\circ\bullet}_{2,\mathrm{nb}}$. We refer to the (uniquely determined) bracket $q$ with lower row $M$ which satisfies that $(p,S)$ and $(q,M)$ are equivalent as  \emph{the bracket $B(p,S)$ associated with $(p,S)$}.
      \end{definition}
      \end{minipage}
      \hfill
       \begin{minipage}[c]{0.333\textwidth}
	\centering
	\begin{tikzpicture}[scale=0.666]
          \draw [dotted, shift={(-0.5,0)}] (0,0) -- (5,0);
	\draw [dotted, shift={(-0.5,3)}] (0,0) -- (5,0);		
	\node [scale=0.4, fill=black, draw=black,circle] (x1) at (0,0) {};
	\node [scale=0.4, fill=white, draw=black,circle] (x2) at (4,0) {};		
	\node [scale=0.4, fill=black, draw=black,circle] (y1) at (0,3) {};
	\node [scale=0.4, fill=white, draw=black,circle] (y2) at (4,3) {};				
	\draw [fill=lightgray] (1,-0.166) rectangle (3,3.166);
	\draw (x1) -- ++ (0,1) -| (x2);
	\draw (y1) -- ++ (0,-1) -| (y2);
	\node at (-0.5,1.5) {$p$};	
	\end{tikzpicture}
      \end{minipage}
      \par
\vspace{0.25cm}
Categories are closed under passing to associated brackets.
\begin{lemma}
	\label{lemma:associated_brackets} {\normalfont\cite[Lem.~6.4]{MaWe18a}}
For all sectors $S$ in $p\in \mathcal  P^{\circ\bullet}_{2,\mathrm{nb}}$ holds $B(p,S)\in\langle p\rangle$.
\end{lemma}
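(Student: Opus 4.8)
The plan is to realise $B(p,S)$ explicitly as a composition of a rotation of $p$ with the involution of another rotation of $p$. Since categories are closed under the basic and cyclic rotations, I would first pass from $p$ to a partition $q\in\langle p\rangle$ whose \emph{lower} row is exactly the sector $S$: writing $\partial S=\{\alpha,\beta\}$ with $\alpha$ preceding $\beta$ in the cyclic order, after rotating, $\alpha$ becomes the leftmost and $\beta$ the rightmost lower point, $\{\alpha,\beta\}$ is a non-through block on the lower row (because $S$ was a sector), and the upper row of $q$ consists precisely of the complement of $S$. Rotations preserve normalized colors together with the block and crossing structure, so $q$ restricted to its lower row still coincides, up to rotation, with $p$ restricted to $S$.

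Next I would set $r\eqpd qq^*$. As $q,q^*\in\langle p\rangle$ and the pair $(q,q^*)$ is composable (the upper row of $q$ and the lower row of $q^*$ both equal the complement of $S$), we get $r\in\langle p\rangle$. Inspecting the composition shows that $r$ has the following shape: its lower row is a copy of $S$ carrying the non-through block $\{\alpha,\beta\}$ together with exactly those pair blocks of $q$ lying inside $\inte(S)$; its upper row is a mirror copy of $S$ with the analogous non-through block and mirrored interior blocks; each point of $\inte(S)$ that lay in a through block of $q$ is joined by a through block of $r$ to its mirror image; and the blocks of $q$ contained in the complement of $S$ produce only closed loops, which are discarded. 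In particular $r\in\Cppnb$.

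Then I would check that $r$ is a bracket. The relation $r^*=r$ is immediate from $(qq^*)^*=q^{**}q^*=qq^*$. For $r^2=r$ it suffices to establish $qq^*q=q$, which I would verify directly: in $q^*q$ the complement of $S$ plays the role of the gluing region, and composing the result once more on the left with $q$ reconstitutes $q$ exactly --- the interior through-ends chain back up, the interior and boundary pair blocks of $q$ survive, and the complement blocks reappear, modulo discarded loops. Moreover the lower row of $r$ is a sector, its boundary $\{\alpha,\beta\}$ being a block of $r$. Hence $r$ is projective with its lower row a sector, i.e.\ a bracket.

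Finally, with $M$ denoting the lower row of $r$, I would confirm that $(r,M)$ and $(p,S)$ are equivalent: $M$ and $S$ are of equal size and, since rotation preserves normalized colors, exhibit the same normalized colors in the same order; for each $i$, the $i$-th interior point of $M$ lies in a block crossing $\partial M$ in $r$ precisely when that point lies in a through block of $q$, which is precisely when the corresponding interior point of $S$ lies in a block of $p$ with one leg in $\inte(S)$ and one leg outside $S$, i.e.\ in a block crossing $\partial S$; and two interior points of $M$ form a block of $r$ iff the corresponding interior points of $S$ form a block of $p$. By the uniqueness of the associated bracket, $r=B(p,S)$, whence $B(p,S)\in\langle p\rangle$. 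The only real work is the bookkeeping for the two compositions, with attention to the conventions that normalized colors flip when rows are exchanged and that loops are discarded; conceptually the statement just says that the "support projection" $qq^*$ of a rotation $q$ of $p$ with lower row $S$ is the bracket of $(p,S)$.
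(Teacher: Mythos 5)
This paper imports the lemma verbatim from the companion article (cited as \cite[Lem.~6.4]{MaWe18a}) and supplies no proof of its own here, so there is no in-text argument to compare against. Your proposed proof is nonetheless correct and is the natural one. Rotating $p$ to a $q\in\langle p\rangle$ whose lower row is exactly $S$ and then taking the support projection $r:=qq^*$ realizes $B(p,S)$: the block bookkeeping you describe for $r$ is accurate (interior non-through blocks of $q$ replicated on both rows together with $\{\alpha,\beta\}$, each $x\in\inte(S)$ in a through block of $q$ joined vertically to its mirror, blocks inside the complement of $S$ closing into discarded loops, so $r\in\Cppnb$); $r^*=r$ is formal; and $r^2=r$ reduces to $qq^*q=q$, the partial-isometry identity that does hold for pair partitions. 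The verification of the four clauses of equivalence of $(r,M)$ and $(p,S)$ is routine given that rotation preserves normalized colors and crossing structure, and uniqueness of the associated bracket then gives $r=B(p,S)$. One small slip in the sketch: the gluing interface of the inner composition $q^*q$ is $S$ itself (lower row of $q$ identified with upper row of $q^*$), not its complement --- the complement of $S$ is the gluing row only in the outer composition $q(q^*q)$. This misstatement does not affect the validity of $qq^*q=q$ or of the proof overall.
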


\subsection{Residual Brackets} 
It can be seen that every category is generated by its set of brackets. But that result can be significantly refined to \emph{residual brackets}. Recall the definition of the verticolor reflection $\tilde p$ of a partition $p\in \Cp$ from Section~\ref{subsection:operations} and of a turn from Section~\ref{subsection:color-sum}.

\begin{definition}
We call a partition $p\in \Cp$ \emph{verticolor-reflexive} if $\tilde p=p$.
\end{definition}
Especially, verticolor-reflexive partitions have evenly many points in both their rows, which is why the following definition makes sense.
\begin{definition}{\normalfont\cite[Def.~6.10]{MaWe18a}}
  \label{definition:dualizable}
We refer to a bracket $p$ with lower row $S$ as \emph{dualizable} if $p\in \mc S_0$, if $p$ is ver\-ti\-co\-lor-re\-flexive, if $\inte(S)$ is non-empty and if the two middle points of $\inte(S)$ form a turn and belong to through blocks.
\end{definition}
Rotating a dualizable bracket cyclically by a quarter times the number of its points produces again a bracket (and both directions of rotation give identical partitions).
\begin{gather*}
  \begin{tikzpicture}[scale=0.666,baseline=1.56cm]
    \draw[dotted] (-0.5,0) -- (9.5,0);
    \draw[dotted] (-0.5,5) -- (9.5,5);
    \node[circle,scale=0.4,draw=black,fill=black] (x1) at (0,0) {};
    \node[circle,scale=0.4,draw=black,fill=white] (x2) at (1,0) {};
    \node[circle,scale=0.4,draw=black,fill=white] (x3) at (2,0) {};
    \node[circle,scale=0.4,draw=black,fill=white] (x4) at (3,0) {};    
    \node[circle,scale=0.4,draw=black,fill=black] (w1) at (4,0) {};
    \node[circle,scale=0.4,draw=black,fill=white] (w2) at (5,0) {};
    \node[circle,scale=0.4,draw=black,fill=black] (x5) at (6,0) {};
    \node[circle,scale=0.4,draw=black,fill=black] (x6) at (7,0) {};
    \node[circle,scale=0.4,draw=black,fill=black] (x7) at (8,0) {};
    \node[circle,scale=0.4,draw=black,fill=white] (x8) at (9,0) {};    
    \node[circle,scale=0.4,draw=black,fill=black] (y1) at (0,5) {};
    \node[circle,scale=0.4,draw=black,fill=white] (y2) at (1,5) {};
    \node[circle,scale=0.4,draw=black,fill=white] (y3) at (2,5) {};
    \node[circle,scale=0.4,draw=black,fill=white] (y4) at (3,5) {};    
    \node[circle,scale=0.4,draw=black,fill=black] (z1) at (4,5) {};
    \node[circle,scale=0.4,draw=black,fill=white] (z2) at (5,5) {};
    \node[circle,scale=0.4,draw=black,fill=black] (y5) at (6,5) {};
    \node[circle,scale=0.4,draw=black,fill=black] (y6) at (7,5) {};
    \node[circle,scale=0.4,draw=black,fill=black] (y7) at (8,5) {};
    \node[circle,scale=0.4,draw=black,fill=white] (y8) at (9,5) {};    
    \draw (x1) -- ++(0,2) -| (x8);
    \draw (y1) -- ++(0,-2) -| (y8);
    \draw (x4) -- ++(0,1) -| (x5);
    \draw (y4) -- ++(0,-1) -| (y5);    
    \draw (x2) to (y2);
    \draw (x3) to (y3);
    \draw (x6) to (y6);
    \draw (x7) to (y7);        
    \draw (w1) to (z1);
    \draw (w2) to (z2);
    \node at (4.5,-1) {$p$};    
    \draw [dashed] (4.5,2.5) -- (4.5,5.5);
    \draw [dashed] (4.5,2.5) -- (4.5,-0.5);    
    \draw [dashed] (4.5,2.5) -- (-0.5,2.5);
    \draw [dashed] (4.5,2.5) -- (9.5,2.5);        
  \end{tikzpicture}
\underset{\circlearrowright\frac{n}{4}}{\overset{\circlearrowleft\frac{n}{4}}{\longrightarrow}}
  \begin{tikzpicture}[scale=0.666,baseline=1.56cm]
    \draw[dotted] (-0.5,0) -- (9.5,0);
    \draw[dotted] (-0.5,5) -- (9.5,5);
    \node[circle,scale=0.4,draw=black,fill=white] (x1) at (0,0) {};
    \node[circle,scale=0.4,draw=black,fill=black] (x2) at (1,0) {};
    \node[circle,scale=0.4,draw=black,fill=black] (x3) at (2,0) {};
    \node[circle,scale=0.4,draw=black,fill=black] (x4) at (3,0) {};    
    \node[circle,scale=0.4,draw=black,fill=white] (w1) at (4,0) {};
    \node[circle,scale=0.4,draw=black,fill=black] (w2) at (5,0) {};
    \node[circle,scale=0.4,draw=black,fill=white] (x5) at (6,0) {};
    \node[circle,scale=0.4,draw=black,fill=white] (x6) at (7,0) {};
    \node[circle,scale=0.4,draw=black,fill=white] (x7) at (8,0) {};
    \node[circle,scale=0.4,draw=black,fill=black] (x8) at (9,0) {};    
    \node[circle,scale=0.4,draw=black,fill=white] (y1) at (0,5) {};
    \node[circle,scale=0.4,draw=black,fill=black] (y2) at (1,5) {};
    \node[circle,scale=0.4,draw=black,fill=black] (y3) at (2,5) {};
    \node[circle,scale=0.4,draw=black,fill=black] (y4) at (3,5) {};    
    \node[circle,scale=0.4,draw=black,fill=white] (z1) at (4,5) {};
    \node[circle,scale=0.4,draw=black,fill=black] (z2) at (5,5) {};
    \node[circle,scale=0.4,draw=black,fill=white] (y5) at (6,5) {};
    \node[circle,scale=0.4,draw=black,fill=white] (y6) at (7,5) {};
    \node[circle,scale=0.4,draw=black,fill=white] (y7) at (8,5) {};
    \node[circle,scale=0.4,draw=black,fill=black] (y8) at (9,5) {};    
    \draw (x1) -- ++(0,2.143) -| (x8); 
    \draw (y1) -- ++(0,-2.143) -| (y8);
    \draw (x2) to (y2);
    \draw (x7) to (y7);
    \draw (x3) -- ++(0,1.428) -| (x6);
    \draw (x4) -- ++(0,0.714) -| (x5);
    \draw (y3) -- ++(0,-1.428) -| (y6);
    \draw (y4) -- ++(0,-0.714) -| (y5);        
    \draw (w1) to (z1);
    \draw (w2) to (z2);
    \node at (4.5,-1) {$p^\dagger$};        
    \draw [dashed] (4.5,2.5) -- (4.5,5.5);
    \draw [dashed] (4.5,2.5) -- (4.5,-0.5);    
    \draw [dashed] (4.5,2.5) -- (-0.5,2.5);
    \draw [dashed] (4.5,2.5) -- (9.5,2.5);            
  \end{tikzpicture}
\end{gather*}

\begin{definition}{\normalfont\cite[Def.~6.11]{MaWe18a}}
  For a dualizable bracket $p$ with $n$ points, we call the bracket $p^\dagger\eqpd p^{\circlearrowleft\frac{n}{4}}=p^{\circlearrowright\frac{n}{4}}$ the \emph{dual bracket} of $p$.
\end{definition}
With the bracket $p\in\Cppnb$ being dualizable, so is its dual $p^\dagger$ and it holds $(p^\dagger)^\dagger=p$ and $\langle p\rangle=\langle p^\dagger\rangle$. 
\begin{definition}{\normalfont\cite[Def.~6.12]{MaWe18a}}
  \label{definition:residual_bracket}
  \begin{enumerate}[label=(\alph*)]
  \item \label{item:residual_bracket-item_1} 
    Let  $p$ be a bracket with lower row $S$.
    \begin{enumerate}
    \item \label{item:residual_bracket-item_1-item_1} We call $p$ \emph{residual of the first kind} if $p$ is connected and if $\inte(S)$ contains no turns of $p$.
    \item \label{item:residual_bracket-item_1-item_2} We call $p$ \emph{residual of the second kind} if $p$ is connected and dualizable and if $\inte(S)$ contains exactly one turn of $p$.
    \item \label{item:residual_bracket-item_1-item_3} We call $p$ \emph{residual} if $p$ is residual of the first or the second kind.
    \end{enumerate}
  \item The set of all residual brackets is denoted by $\mathcal{B}_{\mathrm{res}}$.
  \end{enumerate}
\end{definition}
Let the lower row $S$ of $p\in \resbr$ start with a point of color $c$. If $p$ is residual of the first kind, there exists $w\in \pint$ such that $S$ is given by 
\begin{align*}
  c \ \underset{w}{\underbrace{c\ldots c}} \ \overline c \quad\text{or}\quad c\  \underset{w}{\underbrace{\overline c\ldots \overline c}} \ \overline c,
\end{align*}
whereas, if $p$ is residual of the second kind, we find $v\in \pint$ such that $S$ has the coloration
\begin{align*}
  c \  \underset{v}{\underbrace{c\ldots c}}\, \underset{v}{\underbrace{\overline c\ldots \overline c}} \ \overline c \quad\text{or}\quad   c \  \underset{v}{\underbrace{\overline c\ldots \overline c}}\,\underset{v}{\underbrace{ c\ldots  c}}\ \overline c.
\end{align*}
With the set $\resbr$ of residual brackets we have found a \enquote{universal generator set}:
\begin{proposition}
  \label{proposition:generation_of_categories_by_residual_brackets}
  {\normalfont\cite[Prop.~6.13]{MaWe18a}}
  For every category $\mathcal C\subseteq \mathcal P^{\circ\bullet}_{2,\mathrm{nb}}$ holds \[\mc C=\left\langle \mc C\cap \resbr\right\rangle.\]
\end{proposition}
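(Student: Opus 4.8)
The plan is to argue in two steps: (1) every category $\mc C\subseteq\Cppnb$ is generated by the set of \emph{all} brackets it contains, and (2) this generating set may be thinned out to $\mc C\cap\resbr$. Step~(1) --- the part the surrounding text treats as routine --- I would prove by induction on the number of points of a partition $p\in\mc C$. Using invariance of $\mc C$ under rotations, I would single out a sector $S$ of $p$ belonging to a single connected component (so that its complement, and hence $\inte(S)$, is neutral), rotate $S$ onto the lower row, and note that the associated bracket $B(p,S)$ lies in $\langle p\rangle\subseteq\mc C$ by Lemma~\ref{lemma:associated_brackets}. As $\mc C$ is closed under erasing neutral intervals, the partition $p'$ obtained from $p$ by excising $\inte(S)$ again lies in $\mc C$ and has strictly fewer points, while $p$ is rebuilt from $p'$ and $B(p,S)$ by tensor product and composition. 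The inductive hypothesis applied to $p'$ then places $p$ in the category generated by the brackets of $\mc C$. Hence it suffices to prove Step~(2).

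For Step~(2) it is enough to show every bracket $q\in\mc C$ lies in $\langle\mc C\cap\resbr\rangle$, again by induction on the number of points. A connected bracket whose interior sector has no turn is residual of the first kind, and a connected \emph{dualizable} bracket whose interior sector has exactly one turn is residual of the second kind; such $q$ already lie in $\mc C\cap\resbr$. Every other bracket falls into one of three cases. If $q$ is disconnected, its components are nested --- the sector bounded by the outermost block contains all the rest --- so $q$ is a nested composition of the factor partitions of its components, each a bracket in $\mc C$ with fewer points, and induction applies. If $q$ is connected with at least two turns in its interior sector, erasing one turn yields a smaller bracket $q'\in\mc C$, and $q$ is recovered from $q'$ by composition with a residual bracket of the first kind; that residual bracket is itself of the form $B(q,S')$ for a subsector $S'$ of $q$, hence lies in $\mc C$ by Lemma~\ref{lemma:associated_brackets}. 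Finally, if $q$ is connected with exactly one interior turn but \emph{not} dualizable, the failure of dualizability is exactly what keeps the partition connected after that turn is erased, so the same erase-and-rebuild step reduces the number of points without leaving $\mc C$. When convenient one may pass from $q$ to its dual $q^{\dagger}$ (legitimate since $\langle q\rangle=\langle q^{\dagger}\rangle$ and $q^{\dagger}$ is dualizable whenever $q$ is) to normalise the position of the interior turn; together with the reduction of a bracket's coloration to the two normal forms displayed just before the statement, this closes the induction.

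The hard part will be the verifications concealed in Step~(2): that erasing a turn from a bracket genuinely produces a bracket again (the relations $q'=(q')^{*}$ and $(q')^{2}=q'$ and the property that the lower row is a sector must all survive), that in the third case connectivity really is preserved --- the only place where the precise form of the dualizability hypothesis is used --- and, most delicately, that the partition needed to re-insert a turn actually belongs to $\mc C$ rather than to some larger category, for which the clean route is to exhibit it as $B(q,S')$ for a carefully chosen subsector $S'$ and reapply Lemma~\ref{lemma:associated_brackets}. Organising the chain of rotations, erasures and re-compositions so that every intermediate partition provably stays inside $\mc C$ is where the real effort lies.
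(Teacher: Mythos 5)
This proposition is imported verbatim from \cite[Prop.~6.13]{MaWe18a}; the present paper cites it rather than reproving it, so your argument can only be judged on its own merits. Step~(1) has a genuine gap. Your recursion hinges on finding a sector $S$ of $p$ whose interior $\inte(S)$ is neutral, so that $\inte(S)$ can be erased. But the hypothesis is only $\mc C\subseteq\Cppnb$, and having \emph{all} sectors neutral is precisely the condition that carves out the proper subcategory $\mc S_0\subsetneq\Cppnb$; a general $p\in\Cppnb$ may possess no sector with neutral interior at all. Your parenthetical justification -- that a sector ``belonging to a single connected component'' forces the complement and hence $\inte(S)$ to be neutral -- is sound only when $p$ is disconnected (there $\inte(S)$ can be arranged to be a union of whole blocks). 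For a connected $p\in\Cppnb\setminus\mc S_0$ there is no such $S$: rotate $\PartCrossWW$ to one line and one obtains a connected four-point pair partition in $\Cppnb$ whose four sectors each have a one-point, non-neutral interior, so the erase-and-rebuild recursion never starts.

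Step~(2) is closer to the truth, but you leave the load-bearing verifications unargued, as you acknowledge. Two further points beyond those you list: the factor partition of a nested component of a disconnected bracket is in general \emph{not} a bracket (a nested single-string component yields an identity partition, whose one-point lower row cannot be a sector), so the induction does not bottom out as stated; and the inference ``not dualizable $\Rightarrow$ erasing the unique interior turn leaves the bracket connected'' is a non-sequitur -- dualizability can fail because $p\notin\mc S_0$, or because the turn fails to sit on through blocks, neither of which bears a priori on connectedness after the erasure. Until the Step~(1) obstruction is removed and these reductions are actually proved, the proposal does not constitute a proof.
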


\subsection{Bracket Arithmetics} To further reduce the set $\resbr$, to filter out residual brackets generating the same categories (see Section~\ref{subsection:residual_brackets:categories_generated_by_residual_brackets}), we need to know how to generate new residual brackets from old ones via category operations.\par
\vspace{0.75em}
\noindent
\begin{minipage}{5cm}
	\centering
	\begin{tikzpicture}[scale=0.666]
	\draw [dotted, shift={(-0.5,0)}] (0,0) -- (5,0);
	\draw [dotted, shift={(-0.5,3)}] (0,0) -- (5,0);		
	\node [scale=0.4, fill=black, draw=black,circle] (x1) at (0,0) {};
	\node [scale=0.4, fill=white, draw=black,circle] (x2) at (4,0) {};		
	\node [scale=0.4, fill=black, draw=black,circle] (y1) at (0,3) {};
	\node [scale=0.4, fill=white, draw=black,circle] (y2) at (4,3) {};				
	\draw [fill=lightgray] (1,-0.166) rectangle (3,3.166);
	\draw (x1) -- ++ (0,1) -| (x2);
	\draw (y1) -- ++ (0,-1) -| (y2);
	\node at (2,1.5) {$a$};
	\node at (2,-1) {$\mathrm{Br}\left(\bullet\mid a\mid \circ\right)$};
	\end{tikzpicture}
\end{minipage}
\hfill
\begin{minipage}[c]{0.666\textwidth}
	\begin{definition}{\normalfont\cite[Def.~6.14]{MaWe18a}}
		\begin{enumerate}[label=(\alph*)]
                  			\item If $p\in\mathcal{P}^{\circ\bullet}$ is a bracket, the projective partition which is obtained from $p$ by erasing in every row the left- and the rightmost point, is called the \emph{argument} of $p$. 
			\item Conversely, for each projective $a\in\mathcal P^{\circ\bullet}_{2,\mathrm{nb}}$ and every color $c\in\{\circ,\bullet\}$, denote by $\mathrm{Br}\left(c\mid a\mid \overline{c}\right)$ the bracket whose leftmost lower point is of color $c$ and which has the argument $a$. 
		\end{enumerate}
	\end{definition}
\end{minipage}
\vspace{0.25cm}
We can define two ways of altering the starting color of a bracket while, in some sense, preserving its argument. Write $\mathrm{Id}(\circ)\eqpd \PartIdenW$ and  $\mathrm{Id}(\bullet)\eqpd \PartIdenB$.
\begin{definition}{\normalfont\cite[Def.~6.17]{MaWe18a}}

For every $c\in \colors$ and projective $a\in \Cppnb$, we call
    \begin{align*}
      \trmw(\mathrm{Br}\left(c\mid a\mid \overline{c}\right))\eqpd \mathrm{Br}\left(\overline{c}\mid \mathrm{Br}\left(c\mid a\mid \overline{c}\right)\mid c\right)
    \end{align*}
    the \emph{weak inversion} and
        \begin{align*}
      \trms(\mathrm{Br}\left(c\mid a\mid \overline{c}\right))\eqpd  \mathrm{Br}\left({\overline{c}}\mid \mathrm{Id}(c)\otimes a\otimes \mathrm{Id}(\overline{c})\mid c\right)
        \end{align*}
        the \emph{strong inversion} of $\mathrm{Br}\left(c\mid a\mid \overline{c}\right)$.

\end{definition}
      \begin{center}
        \centering
        \begin{tikzpicture}[scale=0.666]
	\draw [dotted, shift={(-0.5,0)}] (-1,0) -- (6,0);
	\draw [dotted, shift={(-0.5,5)}] (-1,0) -- (6,0);
	\node [scale=0.4, fill=white, draw=black,circle] (z1) at (-1,0) {};
	\node [scale=0.4, fill=black, draw=black,circle] (z2) at (5,0) {};
      	\node [scale=0.4, fill=white, draw=black,circle] (w1) at (-1,5) {};
	\node [scale=0.4, fill=black, draw=black,circle] (w2) at (5,5) {};		        
	\node [scale=0.4, fill=black, draw=black,circle] (x1) at (0,0) {};
	\node [scale=0.4, fill=white, draw=black,circle] (x2) at (4,0) {};		
	\node [scale=0.4, fill=black, draw=black,circle] (y1) at (0,5) {};
	\node [scale=0.4, fill=white, draw=black,circle] (y2) at (4,5) {};				
	\draw [fill=lightgray] (1,-0.166) rectangle (3,5.166);
	\draw (x1) -- ++ (0,1) -| (x2);
	\draw (y1) -- ++ (0,-1) -| (y2);
	\draw (z1) -- ++ (0,2) -| (z2);
	\draw (w1) -- ++ (0,-2) -| (w2);        
	\node at (2,2.5) {$a$};
	\node at (2,-1) {$\trmw(\mathrm{Br}\left(\bullet\mid a\mid \circ\right))$};
      \end{tikzpicture}
      \hspace{2em}
              \begin{tikzpicture}[scale=0.666]
	\draw [dotted, shift={(-0.5,0)}] (-1,0) -- (6,0);
	\draw [dotted, shift={(-0.5,5)}] (-1,0) -- (6,0);
	\node [scale=0.4, fill=white, draw=black,circle] (z1) at (-1,0) {};
	\node [scale=0.4, fill=black, draw=black,circle] (z2) at (5,0) {};
      	\node [scale=0.4, fill=white, draw=black,circle] (w1) at (-1,5) {};
	\node [scale=0.4, fill=black, draw=black,circle] (w2) at (5,5) {};		        
	\node [scale=0.4, fill=black, draw=black,circle] (x1) at (0,0) {};
	\node [scale=0.4, fill=white, draw=black,circle] (x2) at (4,0) {};		
	\node [scale=0.4, fill=black, draw=black,circle] (y1) at (0,5) {};
	\node [scale=0.4, fill=white, draw=black,circle] (y2) at (4,5) {};				
	\draw [fill=lightgray] (1,-0.166) rectangle (3,5.166);
	\draw (x1) --  (y1);
	\draw (x2) --  (y2);
	\draw (z1) -- ++ (0,2) -| (z2);
	\draw (w1) -- ++ (0,-2) -| (w2);        
	\node at (2,2.5) {$a$};
	\node at (2,-1) {$\trms(\mathrm{Br}\left(\bullet \mid a \mid  \circ\right))$};
	\end{tikzpicture}
      \end{center}
      These two transformations can indeed be performed using category operations.
\begin{lemma}
  \label{lemma:bracket_operations}
   {\normalfont\cite[Lem.~6.18 d)--f)]{MaWe18a}}
	Let $p,p'$ be two brackets starting with the same color.
	\begin{enumerate}[label=(\alph*)]
		\item
                  \label{item:bracket_operations-item_6}
                  It holds
                	$\langle \PartBracketBWBW\rangle=\langle\PartHalfLibWBW \rangle=\langle \PartHalfLibBWB\rangle=\langle \PartBracketWBWB\rangle$.
		\item 
		\label{item:bracket_operations-item_3}		
                Weak inversion is a reversible category operation: $\langle p\rangle=\langle \trmw(p)\rangle$.
              \item
		\label{item:bracket_operations-item_4}
                Strong inversion is reversible as well, but it is only available in certain categories:
$\langle p, \PartHalfLibWBW \rangle =\langle \trms(p)\rangle$.
	\end{enumerate}
      \end{lemma}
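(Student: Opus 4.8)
The plan is to reduce each of the three identities to a few explicit moves — tensor products, compositions, involutions, rotations, verticolor reflection and erasing of neutral intervals — using also that $\langle\emptyset\rangle=\mathcal{NC}^{\circ\bullet}\cap\Cppnb$ (Proposition~\ref{proposition:set_relations}~\ref{item:set_relations-item_1}). For part~\ref{item:bracket_operations-item_6} I would argue in three steps. First, the colour-flipped siblings: $\PartHalfLibWBW$ is fixed by the (non-category) reflection $p\mapsto\hat p$, so its verticolor reflection coincides with its colour inversion, which is $\PartHalfLibBWB$; since $p\mapsto\tilde p$ \emph{is} a category operation, $\langle\PartHalfLibWBW\rangle=\langle\PartHalfLibBWB\rangle$. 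Second, $\PartHalfLibWBW$ has exactly two sectors $S$ for which both interior points of $S$ lie on through blocks crossing $\partial S$, and the associated brackets $B(\PartHalfLibWBW,S)$ — elements of $\langle\PartHalfLibWBW\rangle$ by Lemma~\ref{lemma:associated_brackets} — have normalised colourings $\circ\bullet\circ\bullet$ and $\bullet\circ\bullet\circ$, i.e.\ they are $\PartBracketWBWB$ and $\PartBracketBWBW$; hence both lie in $\langle\PartHalfLibWBW\rangle$. Third, for the reverse inclusion, erasing from $\PartBracketWBWB$ the neutral turn formed by the rightmost lower point (a leg of the outer cup) together with the lower point immediately to its left (a leg of a through block) merges a cup-leg with a through-leg and leaves a rotation of $\PartHalfLibWBW$; as erasing neutral intervals is a category operation this gives $\PartHalfLibWBW\in\langle\PartBracketWBWB\rangle$, and likewise $\PartHalfLibWBW\in\langle\PartBracketBWBW\rangle$. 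The four categories therefore coincide.

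For part~\ref{item:bracket_operations-item_3}, write $p=\mathrm{Br}(c\mid a\mid\overline c)$. The inclusion $\langle p\rangle\subseteq\langle\trmw(p)\rangle$ is immediate from Lemma~\ref{lemma:associated_brackets}: the sector of $\trmw(p)=\mathrm{Br}(\overline c\mid p\mid c)$ whose boundary is the inner of its two lower cups is equivalent to $(p,\text{lower row of }p)$, so $p=B(\trmw(p),S)\in\langle\trmw(p)\rangle$ (equivalently, $p$ comes out of $\trmw(p)$ by erasing its two outermost neutral turns). For $\langle\trmw(p)\rangle\subseteq\langle p\rangle$ one has to adjoin an outer bracket layer to $p$ using category operations only: one widens $p$ to $\mathrm{Id}(\overline c)\otimes p\otimes\mathrm{Id}(c)$ and composes on top with the bracket $\mathrm{Br}(\overline c\mid e\mid c)$, $e$ the identity on the colour pattern of the upper row of $p$, which closes the two new outer through strings into a cup and yields $\trmw(p)$. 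The bracket $\mathrm{Br}(\overline c\mid e\mid c)$ is itself crossing; that it nevertheless lies in $\langle p\rangle$ — equivalently, that the single new crossing introduced has a colour distance already present in $\langle p\rangle$ — is the technical point, and is where the bracket arithmetic of \cite{MaWe18a} is needed.

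Part~\ref{item:bracket_operations-item_4} has the same skeleton ``from the outside'': $\trms(p)$ differs from $\trmw(p)$ only in that the non-through turn joining the second and second-to-last points of each row has been replaced by two through strings. Given part~\ref{item:bracket_operations-item_3}, the content of part~\ref{item:bracket_operations-item_4} is that this turn-to-through conversion — and its reverse — can be carried out in a category \emph{exactly when} $\PartHalfLibWBW$ is available: by part~\ref{item:bracket_operations-item_6}, $\PartHalfLibWBW$ generates the crossing bracket $\PartBracketWBWB$ whose argument is two through strings, whereas the ``weak'' analogue has a turn in that spot. Concretely, composing $\trmw(p)$ with a suitably widened copy of $\PartBracketWBWB$ fitted around the inner turn (carrying $a$ along unchanged) produces $\trms(p)$, so $\trms(p)\in\langle p,\PartHalfLibWBW\rangle$; conversely $p$ is recovered from $\trms(p)$ as in part~\ref{item:bracket_operations-item_3}, and $\PartHalfLibWBW$ is recovered because $\trms(p)$ necessarily has a colour-distance-$0$ crossing — its outer cup crosses the through string immediately inside it, the two neighbouring points witnessing this having opposite normalised colour — which is isolated to a rotation of $\PartHalfLibWBW$ by erasing the remaining neutral intervals. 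This gives $\langle\trms(p)\rangle\subseteq\langle p,\PartHalfLibWBW\rangle\subseteq\langle\trms(p)\rangle$.

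The step I expect to be the main obstacle is adjoining or modifying the outer bracket layer while remaining in the correct category: in part~\ref{item:bracket_operations-item_3} the inclusion $\trmw(p)\in\langle p\rangle$, and in part~\ref{item:bracket_operations-item_4} the fact that the widened $\PartBracketWBWB$ needed for the turn-to-through conversion already lies in $\langle p,\PartHalfLibWBW\rangle$; both hinge on a careful analysis of which crossings — and which colour distances — a bracket makes available, which is exactly what the bracket arithmetic of \cite{MaWe18a} is designed to control. A related, lower-level difficulty is the bookkeeping in the reverse inclusions of parts~\ref{item:bracket_operations-item_6} and~\ref{item:bracket_operations-item_4}: one must pick exactly the right neutral turns to erase so as to collapse a bracket onto the bare crossing $\PartHalfLibWBW$ without destroying its crossing component, since erasing the wrong ones flattens everything to a non-crossing partition.
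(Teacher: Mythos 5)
This lemma is not proved in the present paper; it is imported verbatim by citation to \cite[Lem.~6.18~d)--f)]{MaWe18a}, so there is no internal proof to compare your attempt against and I assess it on its own.

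Part~(a) is correct: the two associated brackets coming from the sectors bounded by one of the through blocks of $\PartHalfLibWBW$ are $\PartBracketWBWB$ and $\PartBracketBWBW$, and erasing the turn $\{\ell_3,\ell_4\}$ from the lower row of $\PartBracketWBWB$ produces $\PartHalfLibWBW^{\rcurvearrowup}$. The difficulty is in part~(b). You propose to obtain $\trmw(p)$ as a composition of $\mathrm{Id}(\overline c)\otimes p\otimes\mathrm{Id}(c)$ with $r:=\mathrm{Br}(\overline c\mid e\mid c)$, where $e$ is the identity on the upper row of $p$, and you flag $r\in\langle p\rangle$ as the outstanding technical point, expecting the bracket arithmetic to supply it. It cannot: the claim is simply false. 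In $r$ the outer cup $\{\ell_1,\ell_{n+4}\}$ crosses the adjacent through string $\{\ell_2,u_2\}$, and since $\ell_1$ has color $\overline c$ while $\ell_2$ has color $c$, the signed color distance between those two blocks is $\sigma_r(\,]\ell_1,\ell_2[_r\,)=\sigma_r(\emptyset)=0$, so $0\in A(r)$. But by Remark~\ref{lemma:invariance}, $\bigcup A(\langle p\rangle)=A(p)$, and $0\notin A(\mathrm{Br}_\bullet(w))=A(w)\subseteq\pint$ for every bracket pattern $w$; e.g.\ $A(\mathrm{Br}_\bullet(\{2\}))=\{1,2\}$. So $r\notin\langle p\rangle$ whenever $p$ is a minimal bracket, and the composition step is illegal inside $\langle p\rangle$. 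A route that does stay in $\langle p\rangle$ is via rotation: set $q:=(p^{\lcurvearrowup})^{\rcurvearrowup}$; a direct block-tracing computation gives $q^{*}q=\trmw(p)$, and rotations, involution and composition are category operations which by Lemma~\ref{lemma:A-under-operations} introduce no new color distances. Part~(c) in your scheme both rests on part~(b) and proposes to compose with a widened $\PartBracketWBWB$ whose membership in $\langle p,\PartHalfLibWBW\rangle$ would need an argument of exactly the kind that just failed; the reverse extraction of $\PartHalfLibWBW$ from $\trms(p)$ by erasures is also left at a heuristic level. These parts therefore inherit the defect.
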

For further operations on the set of all brackets see \cite[Lem.~6.18]{MaWe18a}.

\section{Minimal Brackets and Bracket Patterns} 
\label{subsection:residual_brackets:categories_generated_by_residual_brackets}

From Proposition~\ref{proposition:generation_of_categories_by_residual_brackets} we know that categories $\mc C\subseteq \Cppnb$ are generated by their sets $\mc C\cap \resbr$ of residual brackets. In this section we improve on this result by showing that subcategories $\mc C$ of $\mc S_0$ are determined by their sets of \emph{minimal brackets} (Proposition~\ref{corollary:generation_by_minimal_brackets}), proper subsets of $\mc C\cap \resbr$. In addition, we prove that the set of minimal brackets contained in a given category is closed under three generic operations. In fact, in Proposition~\ref*{proposition:devolving}~\ref{item:devolving-item_1} and \ref{item:devolving-item_3} we will see that considering which minimal brackets beget which by means of these transformations is sufficient to prove the remaining Parts~\ref{item:main_1-2a} and~\ref{item:main_1-3} of Main Theorem~\ref{theorem:main_1} and Main Theorem~\ref{theorem:main_2}.

\subsection{Minimal Brackets}
With the aim of refining Proposition~\ref{proposition:generation_of_categories_by_residual_brackets} we advance from considering residual brackets to studying \emph{minimal brackets}.
\begin{definition}
  We call a bracket $p$ with lower row $S$ \emph{minimal} if $p$ is connected and dualizable and if $S$ contains exactly one turn and starts with a $\bullet$-colored point. The set of all minimal brackets is denoted by $\minbr$.
\end{definition}
It is immediate from the definition that minimal brackets are residual of the second kind. In fact, residual brackets of the first kind are of no concern to us since we want to classify subcategories of $\mc S_0$:
\begin{lemma}
  \label{lemma:residual_brackets_in_S_zero}
  Let $p\in \Cppnb$ be a residual bracket. Then, $p\in \mc S_0$ if and only if $p$ is residual of the second kind.
\end{lemma}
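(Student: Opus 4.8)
The plan is to prove the two implications separately; in both cases the work is little more than unwinding the definitions from Section~\ref{section:brackets}. For the direction ``$p$ residual of the second kind $\implies p\in\mc S_0$'' there is nothing to do beyond citing definitions: by Definition~\ref{definition:residual_bracket}, a bracket that is residual of the second kind is in particular dualizable, and Definition~\ref{definition:dualizable} of a dualizable bracket explicitly requires $p\in\mc S_0$.

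For the converse I would argue by contraposition, using that a residual bracket is residual of the first or of the second kind and that these two alternatives are mutually exclusive (the first kind forbids turns in $\inte(S)$, the second kind demands exactly one). Hence it suffices to show that a bracket $p$ which is residual of the first kind is not in $\mc S_0$. Let $S$ be its lower row. Since $p$ is a bracket, $S$ is a sector in $p$, and it is a proper subset of the point set of $p$ because a projective partition with $p=p^*$ has an upper row of the same nonempty size as its lower row; so $p\in\mc S_0$ would force $\sigma_p(S)=0$. On the other hand, $\partial S$ is a block of $p$, hence neutral because $p\in\Cppnb$, and therefore $\sigma_p(S)=\sigma_p(\inte(S))$. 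By the explicit coloration of first-kind residual brackets recalled right after Definition~\ref{definition:residual_bracket}, $\inte(S)$ consists of $w$ lower points of one and the same color for some $w\in\pint$, so $\sigma_p(\inte(S))=\pm w\neq 0$. Thus $\sigma_p(S)\neq 0$ and $p\notin\mc S_0$, which is exactly the contrapositive of the implication to be shown.

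I do not expect any genuine obstacle here. The only points deserving a moment's care are that the lower row really counts as a sector (a proper subset of the points whose boundary is a block of $p$) and that the interior of a first-kind residual bracket is nonempty and monochromatic; both are immediate from the definition of a bracket and from the coloration statement already established in the excerpt, and if desired the monochromaticity can also be seen directly, since two adjacent points of $\inte(S)$ of opposite normalized color would form a turn inside $\inte(S)$, contradicting the first-kind condition.
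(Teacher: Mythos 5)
Your proof is correct and follows essentially the same route as the paper: the second-kind direction is exactly the paper's citation of Definition~\ref{definition:dualizable}, and for the first-kind direction you simply spell out in more detail the paper's terse observation that $\inte(S)$ containing no turns forces $\sigma_p(S)\neq 0$ (via monochromaticity of the nonempty interior and neutrality of the block $\partial S$).
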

\begin{proof}
 If $p$ is residual of the first kind, then the property that $\inte(S)$ contains no turns necessitates $\sigma_p(S)\neq 0$, implying $p\notin \mc S_0$. On the other hand, any residual bracket of the second kind is in $\mc S_0$ by Definition~\ref{definition:dualizable}.
\end{proof}

Given a bracket $p\in\Cppnb$ with lower row $S$, note an important difference in the definitions of $p$ being residual of the second kind and of $p$ being minimal: In both cases we ask a set to contain exactly one turn, but in the former this set is $\inte(S)$, while in the latter it is $S$. The following lemma explains the difference between the two classes of brackets in detail. Recall that we denote the color inversion of $q\in \Cp$ by $\overline q$, see Section~\ref{subsection:operations}.

\begin{lemma}
  \label{lemma:characterization_of_residual_brackets} 
The residual brackets $\resbr\cap\mathcal S_0$ of $\mc S_0$ are precisely the partitions
  \begin{gather*}
    \PartBracketBWBW,\quad\PartBracketWBWB,\quad p,\quad \overline p,\quad
 \trmw(p),\quad \trmw(\overline p),\quad\trms(p),\quad\text{and}\quad
\trms(\overline p)
  \end{gather*}
  for minimal brackets $p\in \minbr$.
\end{lemma}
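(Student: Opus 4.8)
The plan is to prove the two inclusions separately, exploiting the structure theory of residual brackets recalled above together with the classification of colorations of lower rows. First I would dispose of the "easy" direction: that every partition in the displayed list lies in $\resbr\cap\mc S_0$. The two partitions $\PartBracketBWBW$ and $\PartBracketWBWB$ are checked by hand to be residual brackets in $\mc S_0$ (they are connected, projective, have sector lower row, and are neutral blockwise with neutral sectors). For a minimal bracket $p\in\minbr$, by definition $p$ is connected, dualizable, has exactly one turn in its lower row $S$ and starts with $\bullet$; in particular $p$ is residual of the second kind and $p\in\mc S_0$ by Lemma~\ref{lemma:residual_brackets_in_S_zero}. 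The color inversion $\overline p$ inherits connectedness, projectivity, dualizability and the single-turn property, so $\overline p\in\resbr\cap\mc S_0$ as well (it is the corresponding bracket starting with $\circ$). For the weak and strong inversions, Lemma~\ref{lemma:bracket_operations}~\ref{item:bracket_operations-item_3} and~\ref{item:bracket_operations-item_4} (together with the defining formulas for $\trmw$ and $\trms$) show these are again brackets; one then verifies that applying $\trmw$ or $\trms$ to a residual bracket of the second kind preserves connectedness, dualizability and the "exactly one turn in $\inte(S)$" condition — the key point being that $\trmw$ wraps the bracket in one additional nested pair while $\trms$ replaces the two outer through blocks but leaves the turn inside untouched. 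Hence all eight families are in $\resbr\cap\mc S_0$.

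For the converse, let $p\in\resbr\cap\mc S_0$. By Lemma~\ref{lemma:residual_brackets_in_S_zero}, $p$ is residual of the second kind, so $p$ is connected, dualizable, and $\inte(S)$ contains exactly one turn, where $S$ is the lower row. Now invoke the coloration dichotomy recalled just before Proposition~\ref{proposition:generation_of_categories_by_residual_brackets}: writing $c$ for the starting color of $S$, there is $v\in\pint$ with $S$ colored $c\,\underbrace{c\ldots c}_{v}\,\underbrace{\overline c\ldots\overline c}_{v}\,\overline c$ or $c\,\underbrace{\overline c\ldots\overline c}_{v}\,\underbrace{c\ldots c}_{v}\,\overline c$. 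This splits the analysis into (a choice of starting color $c\in\{\circ,\bullet\}$) $\times$ (two coloration patterns), and one reads off how the two outer points and the through/turn structure near the middle must look. I would then show that, after normalizing the starting color to $\bullet$ (trading $p$ for $\overline p$ if $c=\circ$), each residual bracket of the second kind is either already minimal, or is obtained from a minimal bracket by a single application of $\trmw$ or $\trms$ — the distinction being dictated precisely by whether the \emph{outermost} nested pair forms a turn (weak-inversion shape), whether the two outer points sit on through blocks directly feeding into an inner turn structure (strong-inversion shape), or whether the bracket is already "tight" with the single turn in the middle and no extra outer layer (minimal). The small exceptional cases, where $v$ is minimal and the bracket collapses, are exactly $\PartBracketBWBW$ and $\PartBracketWBWB$, which is why they appear separately in the list.

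The main obstacle I anticipate is the bookkeeping in the converse direction: one must be careful that the "peeling off" of an outer nested layer from a residual bracket of the second kind genuinely lands in $\minbr$ (connected, dualizable, single turn, starts with $\bullet$) and is not itself again residual-of-second-kind-but-not-minimal, so that the recursion terminates after exactly one step. This requires checking that removing the outer layer cannot destroy connectedness — here dualizability and the fact that the single turn lies strictly inside $\inte(S)$ are what save us — and that the block through the two middle points of $\inte(S)$, which dualizability forces to be a turn on through blocks, is preserved. I would organize this as a short case table over the four coloration/starting-color combinations, in each cell exhibiting the minimal bracket $q$ and the operation ($\mathrm{id}$, $\trmw$, or $\trms$) with $p=\,$(that operation)$(q)$, and separately noting the degenerate instances producing $\PartBracketBWBW$ and $\PartBracketWBWB$. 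Everything else is routine verification against the definitions of residual, dualizable, and minimal brackets.
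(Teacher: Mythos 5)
Your proposal follows essentially the same route as the paper: reduce to residual brackets of the second kind via Lemma~\ref{lemma:residual_brackets_in_S_zero}, then use the coloration of the lower row $S$ (one turn versus three) together with verticolor-reflexivity to classify each such bracket as minimal, $\trmw$ of a minimal bracket, or $\trms$ of a minimal bracket up to color inversion, with the four-point brackets $\PartBracketBWBW$ and $\PartBracketWBWB$ as degenerate cases. Two small remarks: your phrase ``the outermost nested pair forms a turn'' should read ``$\partial(\inte(S))$ is a block'' (that pair is neutral but generally not an interval of length two, hence not a turn in the paper's sense), and the worry that the ``peeling'' might not terminate after one step is automatic, since the lower row of the peeled-off bracket equals $\inte(S)$, which by residuality of the second kind contains exactly one turn, so the peeled bracket is already minimal up to color inversion.
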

\begin{proof}
Let $q\in \resbr\cap\mathcal S_0$ have lower row $S$. Lemma~\ref{lemma:residual_brackets_in_S_zero} shows that $q$ is residual of the second kind. Hence $\inte(S)$ contains exactly one turn. That means that at most three turns can exist in all of $S$, two of which must intersect $\partial S$. If $S$ has just one turn, then either $q$ or $\overline q$ is minimal already. So, suppose that $S$ has three turns. If $S$ has four points then, $q=\PartBracketBWBW$ or $q=\PartBracketWBWB$. Hence, let $S$ have at least six points. As $q$ is verticolor-reflexive, either $\inte(S)$ is a subsector of $S$ or $\partial (\inte(S))$ intersects through blocks exclusively. In the first case we can write $q=\trmw(p)$ for some bracket $p$, in the second $q=\trms(p)$. The partition $p$ inherits being residual of the second kind from $q$. If the lower row of $p$  starts with $\bullet$-colored point, $p$ is minimal; otherwise $\overline p$ is.
\end{proof}
In the light of the preceding result and  Lemma~\hyperref[item:bracket_operations-item_6]{\ref*{lemma:bracket_operations}~\ref*{item:bracket_operations-item_6}} we make the following distinction.  
\begin{definition}
  \label{definition:monoid_case_non_monoid_case}
  Let $\mathcal C\subseteq \mathcal S_0$ be a category of partitions. 
  \begin{enumerate}[label=(\alph*),labelwidth=!]
  \item We say that $\mathcal C$ is in the \emph{monoid case} if $\PartHalfLibWBW\notin \mathcal C$.
  \item We say that $\mathcal C$ is in the \emph{non-monoid case} if $\PartHalfLibWBW\in \mathcal C$.
  \end{enumerate}
\end{definition}
Then, we can draw the ensuing conclusion from the above characterization of the set $\resbr\cap\mc S_0$, refining Proposition~\ref{proposition:generation_of_categories_by_residual_brackets}.
\begin{proposition}
  \label{corollary:generation_by_minimal_brackets_and_their_color_inversions}
  Let $\mc C\subseteq \mc S_0$ be a category.
  \begin{enumerate}[label=(\alph*)]
  \item If $\mc C$ is in the monoid case, then $\mc C=\langle \mc C\cap (\minbr \cup \overline{\minbr})\rangle$.
    \item If $\mc C$ is in the non-monoid case, then $\mc C=\langle \mc C\cap (\minbr \cup \overline{\minbr}),\, \PartHalfLibWBW \,\rangle$.
  \end{enumerate}
\end{proposition}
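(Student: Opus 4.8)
The plan is to obtain the statement from Proposition~\ref{proposition:generation_of_categories_by_residual_brackets}, the explicit description of the residual brackets of $\mc S_0$ in Lemma~\ref{lemma:characterization_of_residual_brackets}, and the reversibility facts in Lemma~\ref{lemma:bracket_operations}; no further combinatorics is needed. Since $\mc C\subseteq\mc S_0$, Proposition~\ref{proposition:generation_of_categories_by_residual_brackets} gives $\mc C=\langle\mc C\cap\resbr\rangle$, and here $\mc C\cap\resbr\subseteq\resbr\cap\mc S_0$. In both parts the right-to-left inclusion is immediate: $\mc C\cap(\minbr\cup\overline{\minbr})\subseteq\mc C$, and in the non-monoid case also $\PartHalfLibWBW\in\mc C$ by definition, so the category generated by these partitions is contained in $\mc C$. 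It therefore suffices to show that every residual bracket $q\in\mc C$ (necessarily lying in $\resbr\cap\mc S_0$) belongs to the relevant right-hand category.

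For this I would run through the eight shapes listed in Lemma~\ref{lemma:characterization_of_residual_brackets}. If $q\in\{\PartBracketBWBW,\PartBracketWBWB\}$, then the first part of Lemma~\ref{lemma:bracket_operations} gives $\langle q\rangle=\langle\PartHalfLibWBW\rangle$; since $q\in\mc C$ and $\mc C$ is a category, $\PartHalfLibWBW\in\mc C$, so $\mc C$ is in the non-monoid case and $q\in\langle\PartHalfLibWBW\rangle$. If $q=p$ or $q=\overline p$ for some $p\in\minbr$, then $q$ itself lies in $\mc C\cap(\minbr\cup\overline{\minbr})$. If $q=\trmw(p)$ or $q=\trmw(\overline p)$, then, as $p$ and $\overline p$ are brackets, the second part of Lemma~\ref{lemma:bracket_operations} gives $\langle q\rangle=\langle p\rangle$ respectively $\langle q\rangle=\langle\overline p\rangle$; hence the underlying (possibly color-inverted) minimal bracket lies in $\langle q\rangle\subseteq\mc C$, so in $\mc C\cap(\minbr\cup\overline{\minbr})$, and $q$ lies in the category it generates. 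Finally, if $q=\trms(p)$ or $q=\trms(\overline p)$, the third part of Lemma~\ref{lemma:bracket_operations} gives $\langle q\rangle=\langle p,\PartHalfLibWBW\rangle$ respectively $\langle q\rangle=\langle\overline p,\PartHalfLibWBW\rangle$; hence $\PartHalfLibWBW\in\mc C$ (non-monoid case), the corresponding minimal bracket or its color inversion lies in $\mc C\cap(\minbr\cup\overline{\minbr})$, and $q$ lies in $\langle\mc C\cap(\minbr\cup\overline{\minbr}),\PartHalfLibWBW\rangle$.

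Now I would assemble the case analysis. In the monoid case the four shapes $\PartBracketBWBW$, $\PartBracketWBWB$, $\trms(p)$ and $\trms(\overline p)$ cannot occur among the elements of $\mc C$, because each of them would force $\PartHalfLibWBW\in\mc C$; hence every residual bracket in $\mc C$ is of one of the four remaining shapes, and each of those was shown to lie in $\langle\mc C\cap(\minbr\cup\overline{\minbr})\rangle$. Substituting this into $\mc C=\langle\mc C\cap\resbr\rangle$ yields $\mc C\subseteq\langle\mc C\cap(\minbr\cup\overline{\minbr})\rangle\subseteq\mc C$, i.e.\ part~(a). In the non-monoid case every residual bracket in $\mc C$ was shown to lie in $\langle\mc C\cap(\minbr\cup\overline{\minbr}),\PartHalfLibWBW\rangle$, and the same two-sided inclusion argument gives part~(b).

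I do not expect a genuine obstacle here: the proof is essentially bookkeeping over the finitely many types of residual brackets of $\mc S_0$. The one point to be careful about is verifying that each occurrence in $\mc C$ of one of the four \enquote{strong} shapes really does put $\PartHalfLibWBW$ into $\mc C$ --- which follows from the first and third parts of Lemma~\ref{lemma:bracket_operations} together with $\langle q\rangle\subseteq\mc C$ for $q\in\mc C$ --- so that these shapes are ruled out in the monoid case and harmlessly absorbed by the extra generator $\PartHalfLibWBW$ in the non-monoid case.
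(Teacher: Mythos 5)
Your proof is correct and follows the same route as the paper: it invokes Proposition~\ref{proposition:generation_of_categories_by_residual_brackets} to reduce to residual brackets, then uses Lemma~\ref{lemma:characterization_of_residual_brackets} and Lemma~\ref{lemma:bracket_operations} to handle the finitely many shapes. The paper's own proof is just a one-line citation of exactly these three results; your write-up carries out the case analysis that those citations compress.
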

\begin{proof}
  By Proposition~\ref{proposition:generation_of_categories_by_residual_brackets} holds $\mc C=\langle \mc C\cap\resbr\rangle$. Now, Lemma~\ref{lemma:bracket_operations} together with Lemma~\ref{lemma:characterization_of_residual_brackets} yields the result.
\end{proof}

One goal of the following subsection is to show that we can omit $\overline {\minbr}$ in the above result, see Proposition~\ref{corollary:generation_by_minimal_brackets}.

\subsection{Bracket Patterns and Their Induced Partitions} To speak about individual minimal brackets, we introduce the language of \emph{bracket patterns}.
In the following, note our convention $0\notin \pint$ and $0\in \nnint$. 
\begin{definition}
  \label{definition:bracket_patterns}
  \begin{enumerate}[label=(\alph*),labelwidth=!]
  \item A \emph{bracket pattern} is a non-empty finite subset $w$ of $\mathbb N$. 
    \item If $w$ is a bracket pattern, we call $\|w\|:=\max( w)$ the \emph{frame of $w$}.
 \item
   For every bracket pattern $w$ and every color $c\in \{\circ,\bullet\}$, we define \emph{ the $c$-bracket $\mathrm{Br}_c(w)$ of $w$} as the unique residual bracket of the second kind with $2(\|w\|+1)$ points in its lower row $S$, exactly one turn in $S$ and with the property that, if we label the points of the left half of $S$ right to left from $0$ to $\|w\|$, the block at position $k$ crosses the horizontal symmetry axis if $k\notin w$ and crosses the vertical symmetry axis if $k\in w$.
     \begin{align*}
    \mathrm{Br}_c(w)\coloneq
    \begin{tikzpicture}[scale=0.666,baseline=2.35cm*0.666]
      \draw[dotted, gray] (-0.5,0) -- (11.5,0);
      \draw[dotted, gray] (-0.5,5) -- (11.5,5);
      \node[fill=white] (x1) at (0,0) {$c$};
      \node[fill=white] (x2) at (11,0) {$\overline c$};
      \node[fill=white] (x3) at (0,5) {$c$};
      \node[fill=white] (x4) at (11,5) {$\overline c$};
      \node[fill=white] (y1) at (5,0) {$c$};
      \node[fill=white] (y2) at (6,0) {$\overline c$};
      \node[fill=white] (y3) at (5,5) {$c$};
      \node[fill=white] (y4) at (6,5) {$\overline c$};
      \node[fill=white] (z1) at (2,0) {$c$};
      \node[fill=white] (z2) at (9,0) {$\overline c$};
      \node[fill=white] (z3) at (2,5) {$c$};
      \node[fill=white] (z4) at (9,5) {$\overline c$};
      \node[fill=white] (w1) at (3.5,0) {$c$};
      \node[fill=white] (w2) at (7.5,0) {$\overline c$};
      \node[fill=white] (w3) at (3.5,5) {$c$};
      \node[fill=white] (w4) at (7.5,5) {$\overline c$};
      \draw (x1) -- ++(0,2) -| (x2);
      \draw (x3) -- ++(0,-2) -| (x4);
      \draw (y1) to (y3);
      \draw (y2) to (y4);
      \draw (z1) -- ++(0,1) -| (z2);
      \draw (z3) -- ++(0,-1) -| (z4);
      \draw (w1) to (w3);
      \draw (w2) to (w4);
      \draw[dashed] (5.5,2.5) -- (5.5,-1.2);
      \draw[dashed] (5.5,2.5) -- (5.5,5.9);
      \draw[dashed] (5.5,2.5) -- (-0.8,2.5);
      \draw[dashed] (5.5,2.5) -- (11.8,2.5);      
      \node at (5.5,6.2) {symmetry axis $A_{\mathrm{vert}}$};
      \node[align=center, right] at (11.6,2.5) {symmetry\\ axis $A_{\mathrm{hor}}$};      
      \node [below = 2pt of x1] {$\|w\|$};
      \node [below = 2pt of z1] (u1) {$j$};
      \node [below = 2pt of w1] (u2) {$i$};
      \node [below = 2pt of y1] {$0$};
      \path (x1)  -- node [pos=0.5, above=2pt] {$\ldots$} (z1);
      \path (z1)  -- node [pos=0.5, above=2pt] {$\ldots$} (w1);
      \path (w1)  -- node [pos=0.5, above=2pt] {$\ldots$} (y1);
      \path (x3)  -- node [pos=0.5, below=2pt] {$\ldots$} (z3);
      \path (z3)  -- node [pos=0.5, below=2pt] {$\ldots$} (w3);
      \path (w3)  -- node [pos=0.5, below=2pt] {$\ldots$} (y3);
      \path (x2)  -- node [pos=0.5, above=2pt] {$\ldots$} (z2);
      \path (z2)  -- node [pos=0.5, above=2pt] {$\ldots$} (w2);
      \path (w2)  -- node [pos=0.5, above=2pt] {$\ldots$} (y2);
      \path (x4)  -- node [pos=0.5, below=2pt] {$\ldots$} (z4);
      \path (z4)  -- node [pos=0.5, below=2pt] {$\ldots$} (w4);
      \path (w4)  -- node [pos=0.5, below=2pt] {$\ldots$} (y4);
      \node  (l1) [below = 1.2cm of u1, right] {if $j\in w$: block crosses $A_{\mathrm{vert}}$};
      \draw[->] (l1.south west) -- (u1.south);
      \node  (l2) [below = 0.6cm of u2, right] {if $i\notin w$: block crosses $A_{\mathrm{hor}}$};
       \draw[->] (l2.south west) -- (u2.south);
    \end{tikzpicture}
     \end{align*}
  \end{enumerate}
\end{definition}
In this notation we can characterize the minimal brackets as follows.
\begin{lemma}
  \label{lemma:characterization_of_minimal_brackets}
  It holds $\minbr=\{\mathrm{Br}_\bullet(w)\mid w\text{ bracket pattern}\}$.
\end{lemma}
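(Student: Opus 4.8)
The plan is to prove the two inclusions separately. For the inclusion $\supseteq$, fix a bracket pattern $w$. By Definition~\ref{definition:bracket_patterns}, the partition $\mathrm{Br}_\bullet(w)$ is a residual bracket of the second kind whose lower row contains exactly one turn and starts with a black point, and by Definition~\ref{definition:residual_bracket} every residual bracket of the second kind is connected and dualizable. Thus $\mathrm{Br}_\bullet(w)$ meets all four conditions in the definition of $\minbr$, so $\mathrm{Br}_\bullet(w)\in\minbr$.

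For the inclusion $\subseteq$, let $p\in\minbr$ have lower row $S$, so that $p$ is connected and dualizable, $S$ contains exactly one turn, and $S$ begins with a black point. By Definition~\ref{definition:dualizable}, the two middle points of $\inte(S)$ form a turn and lie on through blocks; this turn is contained in $\inte(S)\subseteq S$, and since $S$ harbours only this one turn in total, $\inte(S)$ contains exactly one turn, so $p$ is residual of the second kind (Definition~\ref{definition:residual_bracket}). Dualizability also makes $p$ verticolor-reflexive, so both rows of $p$ have an even number of points; write $|S|=2(n+1)$, where $n\geq 1$ because $\inte(S)=S\setminus\partial S$ is non-empty. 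By the coloration of residual brackets of the second kind recalled in Section~\ref{section:brackets}, and because $S$ starts black, the lower row of $p$ reads either $\bullet\,\bullet^{n}\,\circ^{n}\,\circ$ or $\bullet\,\circ^{n}\,\bullet^{n}\,\circ$; the second possibility has, for $n\geq 1$, three pairwise distinct neutral subintervals of $S$ of length $2$, hence three turns, contradicting minimality. Therefore the lower row of $p$ is coloured $\bullet^{\,n+1}\circ^{\,n+1}$.

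To extract the bracket pattern, I would invoke the structural description of dualizable brackets from \cite[Sect.~6]{MaWe18a} --- the very facts on which the well-posedness of Definition~\ref{definition:bracket_patterns} rests --- namely that $S$ splits into a left and a right half interchanged by the verticolor reflection, that $p$ carries the two symmetry axes $A_{\mathrm{hor}}$ and $A_{\mathrm{vert}}$, and that each block of $p$ meeting $S$ crosses exactly one of these axes. Labelling the left half of $S$ right to left from $0$ to $n$, set
\[
  w:=\{\,k\in\{0,1,\dots,n\}\mid \text{the block at position }k\text{ crosses }A_{\mathrm{vert}}\,\}.
\]
Then $0\notin w$, since the points at position $0$ are the two middle points of $\inte(S)$, which lie on through blocks and hence cross $A_{\mathrm{hor}}$; and $n\in w$, since the points at position $n$ are the two ends of $S$, which form the block $\partial S$ (because $S$ is a sector), a non-through block spanning all of $S$ and therefore crossing $A_{\mathrm{vert}}$. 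Hence $w$ is a non-empty subset of $\{1,\dots,n\}\subseteq\pint$ with $\|w\|=n$, that is, a bracket pattern. Since $p$ is thereby exhibited as a residual bracket of the second kind with $2(\|w\|+1)$ points in its lower row, with exactly one turn there, starting black, and with the block at position $k$ crossing $A_{\mathrm{vert}}$ if and only if $k\in w$, the uniqueness asserted in Definition~\ref{definition:bracket_patterns} forces $p=\mathrm{Br}_\bullet(w)$.

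The part I expect to be the main obstacle is the structural input used in the last paragraph: showing that in a residual bracket of the second kind with exactly one turn in its lower row every block crosses exactly one symmetry axis, and that such a bracket is completely determined by the resulting subset $w$ of $\{0,\dots,n\}$. This is precisely the content that legitimises Definition~\ref{definition:bracket_patterns}, and it is where the combinatorics of projective, verticolor-reflexive pair partitions from \cite{MaWe18a} genuinely enters; once it is granted, everything else reduces to bookkeeping about turns, colours, and the distinguished positions $0$ and $n$.
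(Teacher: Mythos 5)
The paper states this lemma without proof, treating it as an immediate unwinding of Definition~\ref{definition:residual_bracket} and Definition~\ref{definition:bracket_patterns}; your argument is a correct, if somewhat more verbose, version of that unwinding, and both directions are handled as the paper intends.

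The one place where you flag a potential gap --- that every block of $p\in\minbr$ meeting $S$ crosses exactly one of the two symmetry axes --- is in fact already forced by the ingredients you have on the table, so you could close it yourself rather than defer to \cite{MaWe18a}. Having established that $S$ is coloured $\bullet^{n+1}\circ^{n+1}$, take any non-through block $\{i,j\}\subseteq S$ with $i<j$. Since $p$ is dualizable it lies in $\mc S_0$, and $\{i,j\}=\partial[i,j]_p$ is a sector, so $\sigma_p([i,j]_p)=0$. With the coloration $\bullet^{n+1}\circ^{n+1}$ this equation reads $(j-n-1)-(n+2-i)=0$, that is, $i+j=2(n+1)+1$: the block is symmetric about $A_{\mathrm{vert}}$ and hence crosses it. Through blocks cross $A_{\mathrm{hor}}$ by definition, and no block can cross both (a block crossing $A_{\mathrm{hor}}$ meets $S$ in a single point). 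Uniqueness of $p$ given $w$ then follows because the positions of the $A_{\mathrm{vert}}$-crossing pairs $\{i,2(n+1)+1-i\}$ and of the through blocks $\{k,k'\}$ (which must pair up vertically for $p$ to be projective and idempotent) are completely determined by $w$. With this supplement your proof is self-contained.
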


   We define three operations on bracket patterns.
   \begin{definition}
     \label{definition:bracket-operations-first}
     Let $w$ and $w'$ be bracket patterns.
  \begin{enumerate}[label=(\alph*),labelwidth=!]
  \item Let the \emph{superposition  of $w$ and $w'$} be given simply by union of sets, \[w\cup w':= \{i\mid i\in w\text{ or }i\in w'\}.\]
  \item For all $j\in w$, denote by \[\cap_jw:=\{i\mid i \in w, i\leq j\}\] the \emph{$j$-projection of $w$}.
      \item Lastly, define the \emph{dual of $w$} by \[w^\dagger:=\{\|w\|-i\mid i\in \mathbb N_0, i<\|w\|,i\notin w \}.\]
  \end{enumerate}
\end{definition}
To relate these operations on bracket patterns to category operations on the associated minimal brackets we need the following technical result.

\begin{lemma}
  \label{lemma:bracket-pattern-technicalities}
  Let $w$ be an arbitrary bracket pattern.
  \begin{enumerate}[label=(\alph*)]
  \item\label{lemma:bracket-pattern-technicalities-1} It holds $\|w\|=\|w^\dagger\|$.
  \item\label{lemma:bracket-pattern-technicalities-2} It holds $(w^\dagger)^\dagger=w$.
  \item\label{lemma:bracket-pattern-technicalities-3} It holds $(w\cup (w^\dagger))^\dagger=w\cap (w^\dagger)$.
  \end{enumerate}
\end{lemma}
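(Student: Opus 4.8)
The plan is to fix $n \eqpd \|w\|$ once and for all and to show first that every pattern appearing in the statement has frame $n$, so that a single reparametrization handles all three claims. Since $0 \notin \pint$ but $w \subseteq \pint$, the index $0$ lies in the set $\{i \in \nnint : i < n,\ i \notin w\}$; hence $n = n - 0$ is an element of $w^\dagger$, while every element of $w^\dagger$ has the form $n - i$ with $0 \le i < n$ and thus lies in $\{1,\dots,n\}$. This already gives part~\ref{lemma:bracket-pattern-technicalities-1}, and it shows en passant that $w^\dagger$ is a genuine bracket pattern (nonempty, contained in $\pint$); by the same token $n = \max(w \cup w^\dagger)$ belongs to both $w \cup w^\dagger$ and $w \cap w^\dagger$, so these are bracket patterns of frame $n$ as well.

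Next I would encode each bracket pattern $v$ of frame $n$ by its \emph{reduced part} $v^\circ \eqpd v \cap \{1,\dots,n-1\}$. The point $n$ is always present and the point $0$ never, so $v \mapsto v^\circ$ is a bijection from the bracket patterns of frame $n$ onto the subsets of $X \eqpd \{1,\dots,n-1\}$; in particular equality of reduced parts forces equality of patterns. Let $\tau \colon X \to X$, $\tau(i) \eqpd n - i$, which is an involutive bijection of $X$, and write $A^c \eqpd X \setminus A$ for $A \subseteq X$. Unwinding Definition~\ref{definition:bracket-operations-first}, the index set $\{i \in \nnint : i < n,\ i \notin v\}$ equals $\{0\} \sqcup (v^\circ)^c$, and the summand $\{0\}$ is exactly what contributes the mandatory point $n$ to $v^\dagger$; this yields the compact formula $(v^\dagger)^\circ = \tau\bigl((v^\circ)^c\bigr)$ for every bracket pattern $v$ of frame $n$.

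From here both remaining assertions are elementary set theory, using only that $\tau$ is a bijection of $X$ (so $\tau(A^c) = \tau(A)^c$ and $\tau(A\cap B)=\tau(A)\cap\tau(B)$) and that $\tau \circ \tau = \mathrm{id}_X$. For part~\ref{lemma:bracket-pattern-technicalities-2}, $\bigl((w^\dagger)^\dagger\bigr)^\circ = \tau\bigl((\tau((w^\circ)^c))^c\bigr) = \tau\bigl(\tau(w^\circ)\bigr) = w^\circ$, whence $(w^\dagger)^\dagger = w$ by injectivity of $v \mapsto v^\circ$. For part~\ref{lemma:bracket-pattern-technicalities-3}, the De Morgan law gives $\bigl((w \cup w^\dagger)^\dagger\bigr)^\circ = \tau\bigl((w^\circ \cup \tau((w^\circ)^c))^c\bigr) = \tau\bigl((w^\circ)^c \cap \tau(w^\circ)\bigr) = \tau((w^\circ)^c) \cap w^\circ = (w^\dagger)^\circ \cap w^\circ = (w \cap w^\dagger)^\circ$, and the claim follows again from injectivity.

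The only genuinely delicate point is the boundary bookkeeping around the indices $0$ and $n$: one must verify that $\dagger$ never outputs the forbidden value $0$ and always outputs the value $n$, and that $w \cup w^\dagger$ and $w \cap w^\dagger$ really do share the frame $n$ so that the reduced-part calculus applies verbatim to both sides of part~\ref{lemma:bracket-pattern-technicalities-3}. Once the bijection $v \mapsto v^\circ$ and the identity $(v^\dagger)^\circ = \tau((v^\circ)^c)$ are in place, nothing remains beyond De Morgan and the involutivity of $\tau$.
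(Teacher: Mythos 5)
Your proof is correct, and it takes a cleaner, more structured route than the paper's. The paper proves (b) and (c) by directly unwinding the definition of $\dagger$ twice, chasing quantifiers inline (e.g., ``$i\in(w^\dagger)^\dagger$ iff there exists $j\in\pint$ with $j<\|w\|$, $j\notin w^\dagger$, $i=\|w\|-j$ iff $\|w\|-i\notin w^\dagger$ iff \dots''). You instead isolate the logical core once: after the same observation about $0$ that handles part~(a), you encode every bracket pattern $v$ of frame $n$ by its reduced part $v^\circ=v\cap\{1,\dots,n-1\}$ (a bijection, since $n$ is always present and $0$ never is), derive the single identity $(v^\dagger)^\circ=\tau((v^\circ)^c)$ for the involution $\tau(i)=n-i$ on $\{1,\dots,n-1\}$, and then obtain (b) and (c) from nothing more than $\tau\circ\tau=\mathrm{id}$, $\tau(A^c)=\tau(A)^c$, and De~Morgan. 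The bookkeeping you flag --- that $\dagger$ always outputs $n$ and never $0$, so all four patterns $w$, $w^\dagger$, $w\cup w^\dagger$, $w\cap w^\dagger$ share frame $n$ and the reduced-part bijection applies to each --- is exactly the content that the paper re-verifies implicitly in each of its two element-chases; factoring it out once is what buys the shorter set-theoretic endgame. The paper's version needs no auxiliary apparatus and may be preferable for a reader who wants to see the definitions unwound directly; yours would scale better if one needed, say, a fourth identity of the same kind.
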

\begin{proof}
  \begin{enumerate}[wide,label=(\alph*)]
  \item Since $0\notin w$, the definition of $w^\dagger$ implies $\|w\|=\|w^\dagger\|$.
  \item  Let $i\in \pint$ with $i<\|w\|$ be arbitrary. Part~\ref{lemma:bracket-pattern-technicalities-1} shows that it suffices to prove that $i\in (w^\dagger)^\dagger$ if and only if $i\in w$. Because $0<i<\|w\|$, by definition of $(w^\dagger)^\dagger$ the statement $i\in (w^\dagger)^\dagger$ is equivalent to there existing $j\in \pint$ with $j<\|w\|$ such that $j\notin w^\dagger$ and $i=\|w\|-j$. In other words, $i\in (w^\dagger)^\dagger$ if and only if $\|w\|-i\notin w^\dagger$.
And, by definition of $w^\dagger$, an index $j\in \pint$ with $j<\|w\|$ satisfies $j\notin w^\dagger$ if and only if for all $k\in \pint$ with $k<\|w\|$ holds $k\in w$ whenever  $j=\|w\|-k$. Applying this to $j\eqpd \|w\|-i$ shows that $i\in (w^\dagger)^\dagger$ if and only if $i\in w$.
  \item Once more, Part~\ref{lemma:bracket-pattern-technicalities-1} allows us to confine ourselves to proving $i\in (w \cup (w^\dagger))^\dagger$ if and only if $i\in w\cap (w^\dagger)$ for all $i\in \pint$ with $i<\|w\|$. As seen in the proof of Part~\ref{lemma:bracket-pattern-technicalities-2}, for such $i$ the statement $i\in (w \cup (w^\dagger))^\dagger$ is equivalent to $\|w\|-i\notin w\cup (w^\dagger)$. This in turn is the same as saying both $\|w\|-i\notin w$ and $\|w\|-i\notin w^\dagger$. Using Part~\ref{lemma:bracket-pattern-technicalities-2}, we can reformulate the last statement equivalently as $\|w\|-i\notin (w^\dagger)^\dagger$ and $\|w\|-i\notin w^\dagger$. By the definitions of $(w^\dagger)^\dagger$ and $w^\dagger$, this is then true if and only if $i\in w^\dagger$ and $i\in w$, i.e.\ $i\in w\cap (w^\dagger)$.
    \qedhere
  \end{enumerate}
\end{proof}
Note that, as opposed to Claims~\ref{item:minimal_bracket_operations-item_1} and \ref{item:minimal_bracket_operations-item_3}, the colors on the left and the right hand side of the identities in Claim~\ref{item:minimal_bracket_operations-item_2} of the following lemma do not agree. Claim~\ref{item:minimal_bracket_operations-item_4} remedies that.
\begin{lemma} 
    \label{lemma:minimal_bracket_operations}
  Let $w$ and $w'$ be bracket patterns and $c\in\colors$.
  \begin{enumerate}[label=(\alph*),labelwidth=!]
  \item \label{item:minimal_bracket_operations-item_1}   It holds $\brp_c(w\cup w')\in \langle \brp_c(w),\brp_c(w') \rangle$.
  \item \label{item:minimal_bracket_operations-item_3} For all $j\in w$ holds $ \brp_c(\cap_jw)\in \langle \brp_{c}(w)\rangle$.
  \item \label{item:minimal_bracket_operations-item_2} It holds $\brp_{\overline c}(w^\dagger)=\brp_{c}(w)^\dagger$. Hence, $\langle \brp_{\overline c}(w^\dagger)\rangle=\langle \brp_c(w)\rangle$.
  \item \label{item:minimal_bracket_operations-item_4} It holds  $\langle \brp_\bullet(w)\rangle=\langle \brp_\circ(w)\rangle$. 
  \end{enumerate}
\end{lemma}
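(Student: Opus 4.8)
The plan is to read each of the three operations on bracket patterns from Definition~\ref{definition:bracket-operations-first}, together with the color change, as an operation on the associated minimal brackets $\brp_c(w)$ performed inside the generated category, using the bracket machinery of Section~\ref{section:brackets} and \cite[Sect.~6]{MaWe18a}. Claim~\ref{item:minimal_bracket_operations-item_3} is the easiest: since $j\in w$, the block at position $j$ in $p\eqpd\brp_c(w)$ is a non-through block both of whose legs lie on the lower row, and these two legs bound a sector $S$ of $p$ consisting of the innermost $2(j+1)$ points (positions $0,\dots,j$ to either side of the turn). By Lemma~\ref{lemma:associated_brackets} the associated bracket $B(p,S)$ lies in $\langle p\rangle$; comparing colorations, block structure, and which blocks cross $\partial S$ one checks that $(p,S)$ is equivalent to $\brp_c(\cap_j w)$ together with its entire lower row, and therefore $\brp_c(\cap_j w)=B(p,S)\in\langle\brp_c(w)\rangle$.

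For Claim~\ref{item:minimal_bracket_operations-item_2} I would unwind the definition of the dual bracket, $p^\dagger=p^{\circlearrowleft n/4}$ with $n$ the number of points of $p=\brp_c(w)$: a quarter cyclic rotation interchanges the two symmetry axes of the diagram, hence turns every block crossing the horizontal axis (a through block) into one crossing the vertical axis (a non-through block) and vice versa, sends the position index $k$ to $\|w\|-k$, and flips the starting color from $c$ to $\overline c$; matching this against the definition of $w^\dagger$ gives precisely $\brp_c(w)^\dagger=\brp_{\overline c}(w^\dagger)$, and the equality of generated categories is then the identity $\langle p\rangle=\langle p^\dagger\rangle$ for dualizable brackets recalled in Section~\ref{section:brackets} (applicable since $\brp_c(w)$ is dualizable by Lemma~\ref{lemma:characterization_of_minimal_brackets} and Definition~\ref{definition:dualizable}). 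Claim~\ref{item:minimal_bracket_operations-item_4} then follows by combining the projection with weak inversion: by Lemma~\hyperref[item:bracket_operations-item_3]{\ref*{lemma:bracket_operations}~\ref*{item:bracket_operations-item_3}} one has $\langle\brp_\bullet(w)\rangle=\langle\trmw(\brp_\bullet(w))\rangle$, while wrapping in one bracket layer adds a single non-through outermost block and flips the starting color, so that $\trmw(\brp_\bullet(w))=\brp_\circ(w\cup\{\|w\|+1\})$; projecting at $j=\|w\|$ via Claim~\ref{item:minimal_bracket_operations-item_3} and using $\cap_{\|w\|}(w\cup\{\|w\|+1\})=w$ then yields $\brp_\circ(w)\in\langle\brp_\circ(w\cup\{\|w\|+1\})\rangle=\langle\brp_\bullet(w)\rangle$, and the reverse inclusion is symmetric.

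The substance of the lemma is Claim~\ref{item:minimal_bracket_operations-item_1}. If $\|w\|=\|w'\|$, the brackets $\brp_c(w)$ and $\brp_c(w')$ are composable, and inspecting the definition of composition position by position shows that a block of $\brp_c(w)\,\brp_c(w')$ is non-through at position $k$ exactly when it was non-through there in at least one factor; the composite is again a nested bracket, hence equals $\brp_c(w\cup w')$. For unequal frames, say $\|w\|<\|w'\|$, the strategy is to first manufacture, inside $\langle\brp_c(w)\rangle$, a bracket of frame $\|w'\|$ whose pattern agrees with $w$ below $\|w\|$ and with $w'$ from $\|w\|$ up to $\|w'\|$ — so that superposing it with $w'$ returns exactly $w\cup w'$ — and then reduce to the equal-frame case. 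This frame-matching step is the one genuinely delicate point: composition, weak inversion, and projection by themselves can only reach unions of patterns with a common frame and can only enlarge a frame by filling in every intermediate position with a non-through block, which overshoots the intended union, so one must assemble the bracket operations of \cite[Lem.~6.18]{MaWe18a} (together with Claims~\ref{item:minimal_bracket_operations-item_3} and, where needed, \ref{item:minimal_bracket_operations-item_4}) so as to land on that pattern with no spurious positions. Once the correct sector, quarter-rotation, weak inversion, and this frame-matching construction have been identified, the remaining verifications in all four claims are routine diagram checks.
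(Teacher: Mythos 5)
Claims~(b) and~(c) are essentially correct and in line with the paper's argument, but~(a) and~(d) have genuine problems.

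For~(a), your equal-frame composition is indeed the right idea and is exactly what the paper does. The issue is the unequal-frame case. You propose, for $\|w\|<\|w'\|$, to ``manufacture, inside $\langle\brp_c(w)\rangle$, a bracket of frame $\|w'\|$.'' This is not merely delicate, it is impossible: by Remark~\ref{lemma:invariance} and Lemma~\ref{lemma:minimal_bracket_A} one has $\bigcup A(\langle\brp_c(w)\rangle)=A(\brp_c(w))=A(w)$, and by Lemma~\ref{lemma:necessary_range_elements} the maximum of $A(w)$ is $\|w\|$; consequently any minimal bracket $\brp_c(w'')\in\langle\brp_c(w)\rangle$ satisfies $\|w''\|\leq\|w\|$, so no frame-$\|w'\|$ bracket can be produced there. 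The correct move is not to build a bracket at all: assuming without loss of generality $\|w'\|\leq\|w\|$, pad $\brp_c(w')$ with identity partitions to obtain $\idpt{c}^{\otimes(\|w\|-\|w'\|)}\otimes\brp_c(w')\otimes\idpt{\overline c}^{\otimes(\|w\|-\|w'\|)}$. This is composable with $\brp_c(w)$ (the row colorations match), and the composition equals $\brp_c(w\cup w')$ by the same position-by-position check you used for the equal-frame case. The padded factor need not be a bracket, and is not.

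For~(d), the identity $\trmw(\brp_\bullet(w))=\brp_\circ(w\cup\{\|w\|+1\})$ is false. You are right that weak inversion flips the starting color and makes the new outermost block non-through, but the resulting coloration is no longer that of a minimal bracket: the lower row of $\trmw(\brp_\bullet(w))$ is $\circ\,\bullet^{\|w\|+1}\circ^{\|w\|+1}\,\bullet$, which has \emph{three} turns, whereas $\brp_\circ(w'')$ has lower row $\circ^{\|w''\|+1}\bullet^{\|w''\|+1}$ with a single central turn. Indeed $\trmw(\brp_\bullet(w))$ is a residual bracket of the second kind that is not minimal (cf.\ Lemma~\ref{lemma:characterization_of_residual_brackets}), so it is not of the form $\brp_c(\cdot)$ at all, and your projection-then-invert argument collapses. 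Calling the ``remaining verifications'' routine is also wide of the mark here: Part~(d) is the hardest piece of the lemma. The paper proves it by induction on the frame $\|w\|$, using Parts~(a)--(c) and Lemma~\ref{lemma:bracket-pattern-technicalities}: the base case is that $\brp_\bullet(\{1\})$ and $\brp_\circ(\{1\})$ are duals, the case $w=w^\dagger$ is handled via Part~(c), and the general step constructs auxiliary bracket patterns $w_1,w_2,w_3$ from projections, duals and superpositions of $w$ and $w^\dagger$, followed by a separate combinatorial verification that the construction returns $w$.
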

\begin{proof}
  \begin{enumerate}[wide, label=(\alph*)]
      \item We can assume  $\|w'\|\leq\|w\|$. Then, the pairing \[(\mathrm{Br}_c(w),\idpt{c}^{\otimes (\|w\|-\|w'\|)}\otimes \mathrm{Br}_c(w')\otimes \idpt{\overline c}^{\otimes (\|w\|-\|w'\|)})\] is composable and the composition equals $\mathrm{Br}_c(w\cup w')$.
      \item The set of subsectors of the lower row $S$ of $\mathrm{Br}_c(w)$ is totally ordered by $\subseteq$. Let $S_j$ denote the $j$-th smallest subsector of $S$. Then the identity $B(\mathrm{Br}_c(w),S_j)=\mathrm{Br}_c(\cap_jw)$ in conjunction with Lemma~\ref{lemma:associated_brackets} proves the claim.
              \item  This is clear from the definitions.
\item
  We prove the claim by induction over the frame $\|w\|$ of $w$. For $\|w\|=1$, the assertion is true because  \(\brp_\bullet(\{1\})=\PartBracketBBWW\) and \(\brp_\circ(\{1\})=\PartBracketWWBB\) are duals of each other.
  So, let $\|w\|\geq 2$ and suppose that the claim holds for all bracket patterns whose frame is  at most $\|w\|-1$. Let $c\in\colors$ be arbitrary. We show $\brp_{\overline c}(w)\in \mc C\eqpd \langle \brp_{c}(w)\rangle$.
  \par
  If $w=w^\dagger$, then Part~\ref{item:minimal_bracket_operations-item_2} shows $\mr{Br}_{\overline c}(w)=\mr{Br}_{c}(w)^\dagger\in \mc C$. Hence, we can assume $w\backslash (w^\dagger)\neq \emptyset$ or $w^\dagger\backslash w\neq \emptyset$. We first treat the most involved case  of both $w\backslash (w^\dagger)\neq \emptyset$ and $w^\dagger\backslash w\neq \emptyset$.
  Define in this situation the three bracket patterns
  \begin{align*}
    w_1\eqpd \cap_{\|w\backslash (w^\dagger)\|} w \quad\text{and}\quad w_2\eqpd \cap_ {\|w^\dagger\backslash w\|}(w^\dagger)
  \end{align*}
  and
  \begin{align*}
    w_3\eqpd (w_1  \cup (w^\dagger))^\dagger \cup w_2.
  \end{align*}
  In two steps we prove first $\brp_{\overline c}(w_3^\dagger)\in \mc C$ and then $w=w_3^\dagger$.
  \par
  \textbf{Step 1:} 
  Part~\ref{item:minimal_bracket_operations-item_3} implies $\brp_c(w_1)\in \mc C$.  From $\|w\|\in w\cap (w^\dagger)$ we infer $\|w_1\|=\|w\backslash (w^\dagger)\|<\|w\|$. Hence,  by the induction hypothesis it follows $\brp_{\overline c}(w_1)\in \mc C$. By Part~\ref{item:minimal_bracket_operations-item_2} we know $\brp_{\overline c}(w^\dagger)\in \mc C$ and can thus conclude $\brp_{\overline c}(w_1\cup (w^\dagger))\in \mc C$ thanks to Part~\ref{item:minimal_bracket_operations-item_1}. In turn, that implies $\brp_{ c}((w_1\cup (w^\dagger))^\dagger)\in \mc C$ by Part~\ref{item:minimal_bracket_operations-item_2}. \par
  Moreover, Part~\ref{item:minimal_bracket_operations-item_3} shows $\brp_{\overline c}(w_2)\in \mc C$. Again, since $\|w_2\|=\|w^\dagger \backslash w\|<\|w^\dagger\|=\|w\|$, the induction hypothesis yields  $\brp_{c}(w_2)\in \mc C$.
  \par
  Combining the previous two deductions, Part~\ref{item:minimal_bracket_operations-item_1} ensures $\brp_c(w_3)=\brp_c((w_1\cup (w^\dagger))^\dagger\cup w_2)\in \mc C$. Lastly, Part~\ref{item:minimal_bracket_operations-item_2} implies $\brp_{\overline c}(w_3^\dagger)\in \mc C$.
  \par
  \textbf{Step 2:} First, we establish $w_1\cup (w^\dagger)=w\cup (w^\dagger)$. The inclusion $w_1\cup (w^\dagger)\subseteq w\cup (w^\dagger)$ is clear since $w_1\subseteq w$ by definition. So, let $i\in w\cup (w^\dagger)$ be arbitrary. We can assume $i\notin w^\dagger$, i.e.\ $i\in w\backslash (w^\dagger)$. Then, $i\in w$ and $i\leq \|w\backslash (w^\dagger)\|$ prove $i\in w_1$ by definition. Thus, $w_1\cup (w^\dagger)=w\cup (w^\dagger)$ as claimed.
  \par
  So,  we have proven $w_3=(w\cup (w^\dagger))^\dagger \cup w_2$.  By Lemma~\hyperref[lemma:bracket-pattern-technicalities-3]{\ref*{lemma:bracket-pattern-technicalities}~\ref*{lemma:bracket-pattern-technicalities-3}} we conclude $w_3=(w\cap (w^\dagger))\cup w_2$. Hence, by Lemma~\hyperref[lemma:bracket-pattern-technicalities-2]{\ref*{lemma:bracket-pattern-technicalities}~\ref*{lemma:bracket-pattern-technicalities-2}}, it only remains to verify $w^\dagger=(w\cap (w^\dagger)) \cup w_2$. As $w_2\subseteq w^\dagger$, the inclusion $(w\cap (w^\dagger)) \cup w_2\subseteq w^\dagger$ is true. Conversely, let $i\in w^\dagger$ be arbitrary. We can assume $i\notin w\cap (w^\dagger)$, i.e.\ $i\in w^\dagger\backslash w$. Now, $i\in w^\dagger$ and $i\leq \|w^\dagger\backslash w\|$ implies $i\in w_2$. \par
  That concludes the proof for the case $w\backslash (w^\dagger)= \emptyset$ and $w^\dagger\backslash w\neq \emptyset$. If $w\backslash (w^\dagger)= \emptyset$, replace $w_1$ with $w$, and, if $w\backslash (w^\dagger) \neq \emptyset$, replace $w_2$ with $w^\dagger$. Then, a similar and simpler argument proves the claim. \qedhere
  \end{enumerate}
\end{proof}
Finally, with the help of the preceding lemma, we obtain our sought improvement of Proposition~\ref{proposition:generation_of_categories_by_residual_brackets}. 
\begin{proposition}
  \label{corollary:generation_by_minimal_brackets}
  Let $\mc C\subseteq \mc S_0$ be a category.
  \begin{enumerate}[label=(\alph*)]
  \item If $\mc C$ is in the monoid case, then $\mc C=\langle \mc C\cap \minbr\rangle$.
    \item If $\mc C$ is in the non-monoid case, then $\mc C=\langle \mc C\cap \minbr,\, \PartHalfLibWBW \,\rangle$.
  \end{enumerate}
\end{proposition}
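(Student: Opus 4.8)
The plan is to deduce this directly from Proposition~\ref{corollary:generation_by_minimal_brackets_and_their_color_inversions}, which already writes $\mc C$ as the category generated by $\mc C\cap(\minbr\cup\overline{\minbr})$ (together with $\PartHalfLibWBW$ in the non-monoid case). Thus it suffices to prove that the color inversions $\overline{\minbr}$ can be removed, i.e.\ that
\[\langle \mc C\cap(\minbr\cup\overline{\minbr})\rangle=\langle \mc C\cap\minbr\rangle,\]
and likewise with $\PartHalfLibWBW$ adjoined on both sides. Since adjoining a fixed generator does not interfere with the reasoning, I will carry out the argument for the version without $\PartHalfLibWBW$ and only note at the end that the same inclusions give the non-monoid case.

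As $\minbr\subseteq\minbr\cup\overline{\minbr}$, one inclusion is trivial, so the real content is $\mc C\cap\overline{\minbr}\subseteq\langle\mc C\cap\minbr\rangle$. First I would record the elementary observation that color inversion sends $\brp_\bullet(w)$ to $\brp_\circ(w)$ for every bracket pattern $w$: color inversion preserves the block structure of a partition, hence also its crossings, its turns, its connected components, and verticolor-reflexivity, and it preserves the sizes of the two rows; it merely exchanges the starting color. So, directly from Definition~\ref{definition:bracket_patterns}, $\overline{\brp_\bullet(w)}=\brp_\circ(w)$, and together with Lemma~\ref{lemma:characterization_of_minimal_brackets} this gives $\overline{\minbr}=\{\brp_\circ(w)\mid w\text{ a bracket pattern}\}$.

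Now fix a bracket pattern $w$ with $\brp_\circ(w)=\overline{\brp_\bullet(w)}\in\mc C$. By Lemma~\ref{lemma:minimal_bracket_operations}~\ref{item:minimal_bracket_operations-item_4} we have $\langle\brp_\bullet(w)\rangle=\langle\brp_\circ(w)\rangle$, and since $\mc C$ is a category containing $\brp_\circ(w)$, the defining minimality of $\langle\,\cdot\,\rangle$ forces $\brp_\bullet(w)\in\langle\brp_\circ(w)\rangle\subseteq\mc C$; hence $\brp_\bullet(w)\in\mc C\cap\minbr$, and therefore $\brp_\circ(w)\in\langle\brp_\bullet(w)\rangle\subseteq\langle\mc C\cap\minbr\rangle$. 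As $w$ was arbitrary, $\mc C\cap\overline{\minbr}\subseteq\langle\mc C\cap\minbr\rangle$, whence $\mc C\cap(\minbr\cup\overline{\minbr})\subseteq\langle\mc C\cap\minbr\rangle$ and so $\langle\mc C\cap(\minbr\cup\overline{\minbr})\rangle=\langle\mc C\cap\minbr\rangle$. Substituting this back into Proposition~\ref{corollary:generation_by_minimal_brackets_and_their_color_inversions} settles the monoid case; in the non-monoid case the same inclusions yield $\langle\mc C\cap(\minbr\cup\overline{\minbr}),\,\PartHalfLibWBW\,\rangle=\langle\mc C\cap\minbr,\,\PartHalfLibWBW\,\rangle$, and Proposition~\ref{corollary:generation_by_minimal_brackets_and_their_color_inversions} finishes that case too.

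I do not expect a genuine obstacle: the substantive work — that $\brp_\bullet(w)$ and $\brp_\circ(w)$ generate one and the same category — has already been done in Lemma~\ref{lemma:minimal_bracket_operations}~\ref{item:minimal_bracket_operations-item_4} by induction on the frame $\|w\|$. The only subtlety, which is conceptual rather than computational, is to apply that equality to each individual $\brp_\circ(w)\in\mc C$ separately, concluding that its presence in $\mc C$ already drags $\brp_\bullet(w)$ into $\mc C$, instead of trying to compare the categories $\langle\minbr\rangle$ and $\langle\overline{\minbr}\rangle$ as wholes.
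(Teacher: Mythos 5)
Your proof is correct and takes essentially the same approach as the paper, whose own proof simply says to combine Proposition~\ref{corollary:generation_by_minimal_brackets_and_their_color_inversions}, Lemma~\ref{lemma:characterization_of_minimal_brackets} and Lemma~\hyperref[item:minimal_bracket_operations-item_4]{\ref*{lemma:minimal_bracket_operations}~\ref*{item:minimal_bracket_operations-item_4}}; you have just spelled out the combination in detail, including the implicit observation that $\overline{\brp_\bullet(w)}=\brp_\circ(w)$ so that $\overline{\minbr}=\{\brp_\circ(w)\mid w\text{ bracket pattern}\}$.
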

\begin{proof}
  Combine Proposition~\ref{corollary:generation_by_minimal_brackets_and_their_color_inversions}, Lemma~\ref{lemma:characterization_of_minimal_brackets} and Lemma~\hyperref[item:minimal_bracket_operations-item_4]{\ref*{lemma:minimal_bracket_operations}~\ref*{item:minimal_bracket_operations-item_4}}.
\end{proof} 
Lastly, we note how the set of color distances occurring between the blocks of a minimal bracket can be expressed using the language of bracket patterns.
\begin{definition}
  \label{definition:completion-first-time}
For every bracket pattern $w$, define the \emph{completion of $w$} by \[A(w):=\left\{ j-i\mid j\in w, i\in \mathbb N_0, i\notin w, i<j  \right\}.\]
\end{definition}
No harm will come from overloading the symbol $A$ from Definition~\ref{definition:A}, as the following lemma shows.
\begin{lemma}
  \label{lemma:minimal_bracket_A}
  For all bracket patterns $w$ holds \[A(\mr{Br}_\bullet(w))=A(w).\]
\end{lemma}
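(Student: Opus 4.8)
The plan is to read everything off the explicit picture of $p\eqpd\mathrm{Br}_\bullet(w)$ in Definition~\ref{definition:bracket_patterns}. Put $n\eqpd\|w\|$. Label the points so that, in each of the two rows, the left half occupies positions $n,n-1,\dots,1,0$ from left to right and the right half their mirror images $0',1',\dots,n'$; write $\ell^k,\ell^{k'},u^k,u^{k'}$ for the four points at position $k$. Then the blocks of $p$ are: for $k\in w$, the lower arc $L_k\eqpd\{\ell^k,\ell^{k'}\}$ and its horizontal mirror, the upper arc $U_k\eqpd\{u^k,u^{k'}\}$; and for $k\in\{0,\dots,n\}\setminus w$, the through blocks $T_k\eqpd\{\ell^k,u^k\}$ and $T_k'\eqpd\{\ell^{k'},u^{k'}\}$. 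Since $p$ is a minimal bracket, hence dualizable, we have $p\in\mc S_0$, so $\delta_p$ and $d_p$ are well defined on the blocks of $p$ by Lemma~\ref{lemma:color_distance_well_definedness}. Recording the cyclic order (lower row left to right, then upper row right to left, as in the Orientation subsection) and the values of $\sigma_p$ on singletons — namely $-1$ on each lower-left and upper-right point and $+1$ on each lower-right and upper-left point, because the native colour is $\bullet$ in the left half and $\circ$ in the right half of both rows — completes the setup.

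Next I would enumerate the crossing pairs of blocks. From the cyclic order one sees directly that the arcs $L_j$ ($j\in w$) are pairwise nested, that the arcs $U_j$ are pairwise nested, that no $L_j$ crosses any $U_{j'}$ (the former lives in the lower, the latter in the upper contiguous arc of the cycle), and that the through blocks $T_i,T_i'$ are pairwise nested. The only crossings are between an arc and a through block, and an arc at position $j$ crosses a through block at position $i$ precisely when $i<j$: one checks that $L_j$ crosses $T_i$ and that $L_j$ crosses $T_i'$, and, by the horizontal mirror symmetry that exchanges $L_j$ with $U_j$ and fixes $T_i,T_i'$, also that $U_j$ crosses $T_i$ and $T_i'$, each if and only if $i<j$. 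Hence the crossing pairs of $p$ are exactly $\{L_j,T_i\}$, $\{L_j,T_i'\}$, $\{U_j,T_i\}$, $\{U_j,T_i'\}$ with $j\in w$, $i\in\{0,\dots,n\}\setminus w$ and $i<j$.

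Finally I would compute the color distance of each crossing pair. For $j\in w$ and $i\notin w$ with $i<j$, take the representatives $\ell^j\in L_j$ and $\ell^i\in T_i$; these have the same normalized colour $\bullet$, so $\delta_p(L_j,T_i)=\sigma_p(]\ell^j,\ell^i]_p)$, and the half-open interval in question consists of the $j-i$ lower-left points at positions $j-1,\dots,i$, each of $\sigma_p$-measure $-1$; thus $d_p(L_j,T_i)=j-i$. The other three families are handled by the same one-line count, choosing the representatives in each block so that the intervening interval lies in a single quadrant of the cycle (or, equivalently, reducing to such a count via Lemma~\hyperref[item:properties_of_signed_color_distance-item_3]{\ref*{lemma:properties_of_signed_color_distance}~\ref*{item:properties_of_signed_color_distance-item_3}} together with $\sigma_p(P_p)=0$); in every case the result is again $j-i$. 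Combining with the previous paragraph, $A(\mathrm{Br}_\bullet(w))=\{\,j-i\mid j\in w,\ i\in\{0,\dots,\|w\|\}\setminus w,\ i<j\,\}$, which equals $A(w)$ from Definition~\ref{definition:completion-first-time} since $j\le\|w\|$ forces every admissible $i$ to satisfy $i\in\nnint\setminus w$ and $i<\|w\|$, and conversely every such $i$ indexes a through block $T_i$ of $p$ that crosses $L_j$. The only real difficulty here is the bookkeeping of the cyclic order while listing the crossings and picking convenient representatives; there is no conceptual obstacle beyond Lemma~\ref{lemma:properties_of_signed_color_distance} and the explicit shape of $\mathrm{Br}_\bullet(w)$.
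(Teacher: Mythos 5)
Your proof is correct and is, in substance, the same argument the paper has in mind: the paper's proof of this lemma is a one-liner asserting that the identity follows immediately from Definitions~\ref{definition:A}, \ref{definition:bracket_patterns} and~\ref{definition:completion-first-time}, and your write-up is simply the detailed unpacking of that immediate deduction — enumerating the blocks of $\mathrm{Br}_\bullet(w)$, listing the crossing pairs, and computing each color distance as $j-i$.
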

\begin{proof}
  Follows immediately from Definitions~\ref{definition:A}, \ref{definition:bracket_patterns} and~\ref{definition:completion-first-time}.
\end{proof}

\subsection{Bracket Patterns of a Category}
\label{subsection:residual_brackets:bracket_patterns_of_a_category}
While 
Lemma~\ref{lemma:minimal_bracket_operations} was crucial for proving Proposition~\ref{corollary:generation_by_minimal_brackets} by showing that minimal brackets generate the same categories as their color inversions and vice versa, it also motivates the following notion.
\begin{definition}
  \label{definition:bracket-pattern-category-first}
  Let $\mathfrak W$ be a set of bracket patterns.
  \begin{enumerate}
  \item We call $\mathfrak W$ a \emph{bracket pattern category} if it is closed under superposition, dualisation and projections.
\item
  By $\llangle \mathfrak W\rrangle$ we denote the bracket pattern category generated by the set $\mathfrak W$.
    \end{enumerate}
\end{definition}

\begin{definition}
  \label{definition:bracket_patterns_of_category}
  For every category $\mathcal C\subseteq \mathcal S_0$, we call the set
  \begin{align*}
    \mathfrak B_\mathcal C&:=\{ w\mid w\text{ bracket pattern}, \mathrm{Br}_\bullet(w)\in \mathcal C\}
  \end{align*}
   the \emph{bracket patterns of $\mathcal C$}.
 \end{definition}

In the light of Lemma~\ref{lemma:characterization_of_minimal_brackets}, the above definition implies $\mc C\cap \minbr=\{\mr{Br}_\bullet(w)\mid w\in \mathfrak B_\mathcal C\}$ for all categories $\mc C\subseteq \mc S_0$. Thus, we draw the following conclusion.

\begin{proposition}
  \label{proposition:devolving}
  Let $\mathcal C\subseteq \mathcal S_0$ be a category.
  \begin{enumerate}[label=(\alph*),labelwidth=!]
  \item \label{item:devolving-item_1}
    The bracket patterns $\mathfrak B_\mathcal C$ of $\mathcal C$ form a bracket pattern category.
\item \label{item:devolving-item_3}
  Let $\mathfrak G_\mathcal C$ be a set of bracket patterns satisfying $\mathfrak B_\mathcal C=\llangle \mathfrak G_\mathcal C\rrangle$.
  \begin{enumerate}[label=(\arabic*),labelwidth=!]
  \item If $\mathcal C$ is in the monoid case, then \(      \mathcal C=\left\langle \mathrm{Br}_\bullet(w)\mid w\in\mathfrak G_\mathcal C\right\rangle\).
  \item If $\mathcal C$ is in the non-monoid case, then \(\mathcal C=\left\langle \mathrm{Br}_\bullet(w),\PartHalfLibWBW\mid w\in\mathfrak G_\mathcal C\right\rangle\).
  \end{enumerate}
  \end{enumerate}
\end{proposition}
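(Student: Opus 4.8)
For Part~\ref{item:devolving-item_1} the plan is to verify the three closure properties of a bracket pattern category directly from Lemma~\ref{lemma:minimal_bracket_operations}. Suppose $w,w'\in \mathfrak B_\mathcal C$, so that $\mathrm{Br}_\bullet(w),\mathrm{Br}_\bullet(w')\in \mathcal C$. Closure under superposition is immediate since Lemma~\hyperref[item:minimal_bracket_operations-item_1]{\ref*{lemma:minimal_bracket_operations}~\ref*{item:minimal_bracket_operations-item_1}} puts $\mathrm{Br}_\bullet(w\cup w')$ into $\langle \mathrm{Br}_\bullet(w),\mathrm{Br}_\bullet(w')\rangle\subseteq \mathcal C$; closure under projections follows from Lemma~\hyperref[item:minimal_bracket_operations-item_3]{\ref*{lemma:minimal_bracket_operations}~\ref*{item:minimal_bracket_operations-item_3}}, which gives $\mathrm{Br}_\bullet(\cap_jw)\in \langle \mathrm{Br}_\bullet(w)\rangle\subseteq \mathcal C$ for every $j\in w$; and closure under dualisation is obtained in two moves, first using Lemma~\hyperref[item:minimal_bracket_operations-item_2]{\ref*{lemma:minimal_bracket_operations}~\ref*{item:minimal_bracket_operations-item_2}} with $c=\bullet$ to get $\mathrm{Br}_\circ(w^\dagger)=\mathrm{Br}_\bullet(w)^\dagger\in \langle \mathrm{Br}_\bullet(w)\rangle\subseteq\mathcal C$, and then Lemma~\hyperref[item:minimal_bracket_operations-item_4]{\ref*{lemma:minimal_bracket_operations}~\ref*{item:minimal_bracket_operations-item_4}} applied to the pattern $w^\dagger$ to conclude $\mathrm{Br}_\bullet(w^\dagger)\in \langle \mathrm{Br}_\circ(w^\dagger)\rangle\subseteq \mathcal C$, i.e.\ $w^\dagger\in \mathfrak B_\mathcal C$.

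For Part~\ref{item:devolving-item_3} I would bootstrap Part~\ref{item:devolving-item_1} against Proposition~\ref{corollary:generation_by_minimal_brackets}. Let $\mathcal D\eqpd \langle \mathrm{Br}_\bullet(w)\mid w\in \mathfrak G_\mathcal C\rangle$ in the monoid case and $\mathcal D\eqpd \langle \mathrm{Br}_\bullet(w),\PartHalfLibWBW\mid w\in \mathfrak G_\mathcal C\rangle$ in the non-monoid case. Since $\mathfrak G_\mathcal C\subseteq \llangle \mathfrak G_\mathcal C\rrangle=\mathfrak B_\mathcal C$, every generator of $\mathcal D$ already lies in $\mathcal C$ by Definition~\ref{definition:bracket_patterns_of_category}, hence $\mathcal D\subseteq \mathcal C$; in particular $\mathcal D\subseteq \mathcal S_0$, and $\mathcal D$ is in the monoid (resp.\ non-monoid) case precisely as $\mathcal C$ is. Now apply Part~\ref{item:devolving-item_1} to $\mathcal D$: the set $\mathfrak B_\mathcal D$ is a bracket pattern category, and it contains $\mathfrak G_\mathcal C$ by construction of $\mathcal D$, so $\mathfrak B_\mathcal C=\llangle \mathfrak G_\mathcal C\rrangle\subseteq \mathfrak B_\mathcal D$.

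It remains only to chain in Proposition~\ref{corollary:generation_by_minimal_brackets}, recalling that Lemma~\ref{lemma:characterization_of_minimal_brackets} and Definition~\ref{definition:bracket_patterns_of_category} identify $\mathcal E\cap \minbr=\{\mathrm{Br}_\bullet(w)\mid w\in \mathfrak B_\mathcal E\}$ for any category $\mathcal E\subseteq \mathcal S_0$. In the monoid case this gives $\mathcal C=\langle \mathrm{Br}_\bullet(w)\mid w\in \mathfrak B_\mathcal C\rangle$ and $\mathcal D=\langle \mathrm{Br}_\bullet(w)\mid w\in \mathfrak B_\mathcal D\rangle$, so $\mathfrak B_\mathcal C\subseteq \mathfrak B_\mathcal D$ forces $\mathcal C\subseteq \mathcal D$, whence $\mathcal C=\mathcal D$ together with the reverse inclusion already noted; the non-monoid case is word for word the same after adjoining $\PartHalfLibWBW$ to every generating set. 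I expect no genuine obstacle here: the only thing needing care is the purely administrative check that the auxiliary category $\mathcal D$ stays inside $\mathcal S_0$ and inherits the monoid/non-monoid dichotomy from $\mathcal C$, both of which are immediate from $\mathcal D\subseteq \mathcal C$, and all the real content is already delivered by Lemma~\ref{lemma:minimal_bracket_operations} and Proposition~\ref{corollary:generation_by_minimal_brackets}.
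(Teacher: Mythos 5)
Your proof is correct and uses essentially the same tools as the paper. For Part~\ref{item:devolving-item_1} you simply spell out what the paper compresses into \enquote{combined result of all parts of Lemma~\ref{lemma:minimal_bracket_operations}}, handling dualisation by composing Lemma~\ref*{lemma:minimal_bracket_operations}~\ref*{item:minimal_bracket_operations-item_2} with \ref*{item:minimal_bracket_operations-item_4}, which is precisely what is needed. For Part~\ref{item:devolving-item_3}, where the paper directly records the covariance inclusion $\{\mathrm{Br}_\bullet(w)\mid w\in\llangle\mathfrak S\rrangle\}\subseteq\langle\mathrm{Br}_\bullet(w)\mid w\in\mathfrak S\rangle$ (again citing Lemma~\ref{lemma:minimal_bracket_operations}) and then chains it with Proposition~\ref{corollary:generation_by_minimal_brackets}, you instead introduce the auxiliary category $\mathcal D$, check it inherits the monoid/non-monoid dichotomy from $\mathcal C$ (correct since $\mathcal D\subseteq\mathcal C$ and $\PartHalfLibWBW$ is adjoined as a generator in the non-monoid case), and obtain $\mathfrak B_\mathcal C\subseteq\mathfrak B_\mathcal D$ by invoking Part~\ref{item:devolving-item_1} on $\mathcal D$; this bootstrap delivers the same inclusion without restating the covariance fact, so the difference is purely presentational.
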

\begin{proof}
  \begin{enumerate}[wide, labelwidth=!, label=(\alph*)]
  \item  This is the combined result of all parts of Lemma~\ref{lemma:minimal_bracket_operations}.

  \item First, recognize that for all sets $\mathfrak S$ of bracket patterns holds
      \begin{align*}
        \left\{\mathrm{Br}_\bullet(w)\mid w\in \llangle \mathfrak S\rrangle \right\}\subseteq \left\langle \mathrm{Br}_\bullet(w) \mid w\in \mathfrak S\right\rangle,
      \end{align*}
      again thanks to Lemma~\ref{lemma:minimal_bracket_operations}.
         Using this for the first inclusion below, we find
    \begin{align*}
      \left\{ \mathrm{Br}_\bullet(w)\mid w\in \mathfrak B_{\mathcal C}\right\}&=\left\{ \mathrm{Br}_\bullet(w)\mid w\in \llangle \mathfrak G_\mathcal C\rrangle\right\}\\
                                                                                &\subseteq \left\langle  \mathrm{Br}_\bullet(w) \mid w\in \mathfrak G_\mathcal C\right \rangle\\
                                                                                &\subseteq \left\langle   \mathrm{Br}_\bullet(w) \mid w\in \mathfrak B_{\mathcal C}\right \rangle\\
      &\subseteq \mathcal C.
    \end{align*}
    Applying Proposition~\ref{corollary:generation_by_minimal_brackets}, we infer, if $\mathcal C$ is in the monoid case, 
    \begin{align*}
      \mathcal C=\left\langle \mathrm{Br}_\bullet(w)\mid w\in \mathfrak B_{\mathcal C}\right\rangle\subseteq \left\langle  \mathrm{Br}_\bullet(w) \mid w\in \mathfrak G_\mathcal C\right \rangle\subseteq \mathcal C.
    \end{align*}
    and, if $\mathcal C$ is in the non-monoid case,
        \begin{align*}
      \mathcal C=\left\langle \mathrm{Br}_\bullet(w),\PartHalfLibWBW \mid w\in \mathfrak B_{\mathcal C}\right\rangle\subseteq \left\langle  \mathrm{Br}_\bullet(w),\PartHalfLibWBW \mid w\in \mathfrak G_\mathcal C\right \rangle\subseteq \mathcal C,
        \end{align*}
        which is what we needed to show.\qedhere
  \end{enumerate}

\end{proof}
The preceding result demonstrates that there exists an injection from the set of subcategories of $\mc S_0$ on the one hand to two copies of the set of all bracket pattern categories on the other hand (one for the monoid case and one for the non-monoid case).  In addition, Proposition~\ref{proposition:devolving} reveals that we can find generators of a given category $\mc C\subseteq\mc S_0$ by determining a generator of the corresponding bracket pattern category $\mathfrak B_\mc C$. In Section~\ref{section:generators_and_classification}, we will prove the injection to be  surjective as well by combining Propositions~\ref{theorem:category_property_of_I_N} and~\ref{proposition:devolving} with the results of the following section. That will return the full classification of subcategories of $\mc S_0$.

\section{Classification of Bracket Pattern Categories}
\label{section:bracket_pattern_categories}
In Proposition~\ref{proposition:devolving} we saw that the problem of classifying subcategories of $\mathcal S_0$ and finding their generators can be devolved to the analogous tasks for another, simpler class of combinatorial objects, \emph{bracket pattern categories} (see Subsection~\ref{subsection:bracket_patterns_and_their_categories}). These two reduced problems are the object of this section. Consequently, it mentions no partitions at all and can be read entirely independently from the rest of the article.

\subsection{Bracket Patterns and their Categories}
\label{subsection:bracket_patterns_and_their_categories}
For the convenience of the reader we repeat the relevant definitions (Definition~\ref{definition:bracket_patterns}, \ref{definition:bracket-operations-first} and~\ref{definition:bracket-pattern-category-first}). Mind $0\notin \pint$, whereas $\nnint\eqpd \pint\cup\{0\}$.
\begin{definition}
  \begin{enumerate}[label=(\alph*),labelwidth=!]
    \item A \emph{bracket pattern} is a non-empty finite subset of $\pint$ and $\|w\|$ denotes its largest element, its \emph{frame}.
    \item   We define three operations on bracket patterns $w$, $w'$.
  \begin{enumerate}[labelwidth=!]
  \item Let the \emph{superposition  of $w$ and $w'$} be given simply by union of sets, \[w\cup w':= \{i\mid i\in w\text{ or }i\in w'\}.\]
  \item For all $j\in w$, denote by \[\cap_jw:=\{i\mid i \in w, i\leq j\}\] the \emph{$j$-projection of $w$}.
  \item Lastly, define the \emph{dual of $w$} by \[w^\dagger:=\{\|w\|-i\mid i\in \mathbb N_0, i<\|w\|,i\notin w \}.\]    
  \end{enumerate}
\item A set $\mathfrak W$ of bracket patterns is called a \emph{bracket pattern category} if it is closed under superposition, dualisation and projections in the above sense.
\item
For every set $\mathfrak W$ of bracket patterns, denote by $\llangle \mathfrak W\rrangle$ the bracket pattern category generated by $\mathfrak W$. If $\mathfrak{W}=\{w\}$ for a bracket pattern $w$, we slightly abuse notation by writing $\llangle w\rrangle$ instead of $\llangle \{w\}\rrangle$.
    \end{enumerate}
\end{definition}
Note that $\emptyset$ is a bracket pattern category but not a bracket pattern. We repeat Lemma~\ref{lemma:bracket-pattern-technicalities} stating elementary facts about these operations.
\begin{lemma}
  \label{lemma:bracket-pattern-technicalities-second}
  Let $w$ be an arbitrary bracket pattern.
  \begin{enumerate}[label=(\alph*)]
  \item\label{lemma:bracket-pattern-technicalities-second-1} It holds $\|w\|=\|w^\dagger\|$.
  \item\label{lemma:bracket-pattern-technicalities-second-2} It holds $(w^\dagger)^\dagger=w$.
  \item\label{lemma:bracket-pattern-technicalities-second-3} It holds $(w\cup (w^\dagger))^\dagger=w\cap (w^\dagger)$.
  \end{enumerate}
\end{lemma}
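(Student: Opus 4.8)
The plan is to note first that this lemma is a verbatim restatement of Lemma~\ref{lemma:bracket-pattern-technicalities}, already proved above, so one may simply invoke that proof; for the sake of keeping this self-contained section independent of the rest of the article, I would instead reprove the three items directly from the defining property $w^\dagger=\{\,\|w\|-i\mid i\in\nnint,\ i<\|w\|,\ i\notin w\,\}$. The single computation to record at the outset is the membership rule obtained by solving $j=\|w\|-i$ for $i$: for every $j\in\nnint$ one has $j\in w^\dagger$ if and only if $0<j\le\|w\|$ and $\|w\|-j\notin w$. Every assertion then follows by unwinding this rule, the only care being required at the two boundary indices $0$ and $\|w\|$.

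For part~\ref{lemma:bracket-pattern-technicalities-second-1}: each element of $w^\dagger$ has the shape $\|w\|-i$ with $i\ge 0$ and hence is at most $\|w\|$, while $w\subseteq\pint$ forces $0\notin w$, so the index $i=0$ is admissible and produces $\|w\|\in w^\dagger$; thus $\|w^\dagger\|=\|w\|$. For part~\ref{lemma:bracket-pattern-technicalities-second-2}: by the previous item both $w$ and $(w^\dagger)^\dagger$ have frame $\|w\|$, and both contain $\|w\|$ and omit $0$, so it suffices to compare membership of an index $i$ with $0<i<\|w\|$; two applications of the membership rule give the chain $i\in(w^\dagger)^\dagger\iff\|w\|-i\notin w^\dagger\iff\|w\|-(\|w\|-i)\in w\iff i\in w$, which is the claim.

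For part~\ref{lemma:bracket-pattern-technicalities-second-3}: since $\|w\cup w^\dagger\|=\|w\|$ by part~\ref{lemma:bracket-pattern-technicalities-second-1}, I again restrict attention to indices $i$ with $0<i<\|w\|$ (the index $\|w\|$ lies in all four sets involved, the index $0$ in none). Unwinding the rule, $i\in(w\cup w^\dagger)^\dagger$ holds precisely when $\|w\|-i\notin w$ and $\|w\|-i\notin w^\dagger$; by two further applications of the rule these are equivalent to $i\in w^\dagger$ and to $i\in w$ respectively, so the conjunction says exactly $i\in w\cap w^\dagger$. The only delicate point in the whole argument is the bookkeeping of the endpoints $0$ and $\|w\|$ and the correct negation of the quantifier defining $w^\dagger$; there is no genuine obstacle.
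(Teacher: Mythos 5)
Your proof is correct and follows essentially the same route as the paper's proof of the earlier Lemma~\ref{lemma:bracket-pattern-technicalities} (the present lemma is, as you note, a verbatim restatement for which the paper gives no new proof). The only presentational difference is that you extract the membership rule $j\in w^\dagger\iff 0<j\le\|w\|\text{ and }\|w\|-j\notin w$ once at the outset and apply it uniformly, which is a modest streamlining of the paper's inline unwindings of the definition — in particular it replaces the paper's somewhat contorted double-quantifier phrasing in part~\ref{lemma:bracket-pattern-technicalities-second-2} with a single clean equivalence; the endpoint bookkeeping for $0$ and $\|w\|$ is handled correctly.
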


\subsection{Completion of Bracket Patterns}
Reiterating Definition~\ref{definition:completion-first-time}, the following mapping will be key to classifying bracket pattern categories.
\begin{definition}
For every bracket pattern $w$, define the \emph{completion of $w$} by \[A(w):=\left\{ j-i\mid j\in w, i\in \mathbb N_0, i\notin w, i<j  \right\}.\]
\end{definition}

The next lemma shows especially that $A$ is extensive. 
Eventually it will be proven that $A$ is idempotent. However, note that, in general, $w\subseteq w'$ does \emph{not} imply $A(w)\subseteq A(w')$ for bracket patterns $w$ and $w'$. For example, $\{4\}\subseteq \{1,2,4\}$, but $A(\{4\})=\{1,2,3,4\}\not\subseteq \{1,2,4\}=A(\{1,2,4\})$.
\begin{lemma}
  \label{lemma:necessary_range_elements}
  For every bracket pattern $w$ holds \[ w\subseteq A(w), \quad 1=\min(A(w)) \quad\text{ and }\quad \|A(w)\|=\|w\|.\]
  In particular, $A(w)\subseteq A(A(w))$.
\end{lemma}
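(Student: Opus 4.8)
The plan is to check each of the three displayed properties separately, working straight from the definition $A(w)=\{\,j-i\mid j\in w,\ i\in\nnint,\ i\notin w,\ i<j\,\}$, and then to deduce the concluding inclusion $A(w)\subseteq A(A(w))$ by observing that $A(w)$ is itself a bracket pattern and applying the extensivity statement $w\subseteq A(w)$ to it. No auxiliary results are needed beyond the convention $0\notin\pint$.

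Concretely, for $w\subseteq A(w)$ I would take an arbitrary $j\in w$ and use $i\eqpd 0$: since $w\subseteq\pint$ we have $0\notin w$ and $0<j$, so $j=j-0\in A(w)$; in passing this shows $A(w)\neq\emptyset$. For $1=\min(A(w))$ there are two directions: every element of $A(w)$ has the form $j-i$ with $i<j$, hence is a positive integer, giving $\min(A(w))\geq 1$; and conversely, putting $j\eqpd\min(w)$ and $i\eqpd j-1$, one has $i\in\nnint$ (because $j\geq 1$), $i<j$, and $i\notin w$ (because $i<\min(w)$), so $j-i=1\in A(w)$. For $\|A(w)\|=\|w\|$ the inequality $\|A(w)\|\geq\|w\|$ is immediate from $w\subseteq A(w)$, while any element $j-i$ of $A(w)$ satisfies $j-i\leq j\leq\max(w)=\|w\|$, which gives $\|A(w)\|\leq\|w\|$.

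Finally, the three facts just established say precisely that $A(w)$ is a non-empty finite subset of $\pint$ with frame $\|w\|$, i.e.\ a bracket pattern; so applying $w\subseteq A(w)$ with $A(w)$ in place of $w$ yields $A(w)\subseteq A(A(w))$. I do not anticipate any genuine obstacle in this lemma — it is entirely elementary — the only point worth stating carefully is that $A$ maps bracket patterns to bracket patterns, which is exactly what makes the "in particular" clause meaningful and which falls out of the bounds $1=\min(A(w))$ and $\|A(w)\|=\|w\|<\infty$.
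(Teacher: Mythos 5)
Your proof is correct and proceeds, as the paper's does, directly from the definition of $A(w)$. The only place you genuinely diverge is the argument that $1\in A(w)$: the paper splits into the cases $w=\{1,\ldots,\|w\|\}$ (where $A(w)=w\ni 1$) versus not, and in the latter case takes $i$ to be the largest integer $\leq\|w\|$ not in $w$, observing that $i+1\in w$ so $1=(i+1)-i\in A(w)$; you instead take $j=\min(w)$ and $i=j-1$, which handles both cases at once since $i<\min(w)$ forces $i\notin w$ while $j\geq 1$ guarantees $i\in\nnint$. Your variant is slightly cleaner, but both are elementary and the overall approach is the same, including the observation that the three displayed facts certify $A(w)$ as a bracket pattern so that extensivity can be reapplied to give the final inclusion.
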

\begin{proof}
  The facts that $w\subseteq A(w)$, that $1\leq \min(A(w))$ and that $\max(A(w))=\|w\|$ are clear from the definition of $A(w)$. If $w=\{i\mid i\in \mathbb N, i\leq \|w\|\}$, then $1\in A(w)$. So suppose there exists $i\in \mathbb N$ such that $i\leq \|w\|$ and $i\notin w$. We can choose $i$ to be maximal such. Then, $i<\|w\|$ and $i+1\in w$. It follows $1=(i+1)-i\in A(w)$ by definition of $A(w)$.
\end{proof}

We now show, amongst other things, that bracket pattern categories are invariant under the completion operation. This takes a few auxiliary results, some of which will also be used later on for other purposes.
\begin{lemma}
  \label{lemma:range_extensivity_of_bracket_pattern_operations}
  Let $w$ and $w'$ be bracket patterns and let $j\in w$ be arbitrary.
  \begin{enumerate}[label=(\alph*),labelwidth=!]
  \item\label{lemma:range_extensivity_of_bracket_pattern_operations-1} It holds $A(w\cup w')\subseteq A(w)\cup A(w')$.
  \item\label{lemma:range_extensivity_of_bracket_pattern_operations-3} It holds $A(\cap_jw)\subseteq A(w)$.
  \item\label{lemma:range_extensivity_of_bracket_pattern_operations-2} It holds $A(w^\dagger)=A(w)$.
  \end{enumerate}
\end{lemma}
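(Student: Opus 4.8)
The plan is to reduce all three statements to one convenient reformulation of the completion map. First I would record the trivial but useful observation that, for any bracket pattern $w$ and any $d\in\pint$, one has $d\in A(w)$ if and only if there is some $i\in\nnint$ with $i\notin w$ and $i+d\in w$; indeed this is just the renaming $j\eqpd i+d$ in the definition of $A(w)$, the inequality $i<i+d$ being automatic. With this in hand, Part~\ref{lemma:range_extensivity_of_bracket_pattern_operations-1} is immediate: given $d\in A(w\cup w')$, pick $i\notin w\cup w'$ with $i+d\in w\cup w'$; then $i$ avoids both $w$ and $w'$, while $i+d$ lies in (at least) one of them, say $w$, so $d\in A(w)\subseteq A(w)\cup A(w')$. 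For Part~\ref{lemma:range_extensivity_of_bracket_pattern_operations-3}, given $d\in A(\cap_jw)$ choose $i\notin\cap_jw$ with $i+d\in\cap_jw$; then $i+d\in w$ and $i+d\le j$, hence $i<i+d\le j$, so that $i\le j$; now $i\notin\cap_jw$ together with $i\le j$ forces $i\notin w$, and therefore $d\in A(w)$.

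For Part~\ref{lemma:range_extensivity_of_bracket_pattern_operations-2} the idea is to rephrase membership in $w^\dagger$ as a condition on the complement of $w$. Put $n\eqpd\|w\|$. Since $n\in w$, the index $i=n$ contributes nothing to the defining set of $w^\dagger$, so $w^\dagger=\{n-i\mid i\in\{0,\dots,n\},\ i\notin w\}$; reading this off yields the symmetric characterization that, for every $k\in\{0,\dots,n\}$, one has $k\in w^\dagger$ if and only if $n-k\notin w$. Moreover $w^\dagger\subseteq\{1,\dots,n\}$ directly from the definition. Now take $d\in A(w^\dagger)$ and pick $i\in\nnint$ with $i\notin w^\dagger$ and $i+d\in w^\dagger$. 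Then $i+d\le n$, so both $i$ and $i+d$ lie in $\{0,\dots,n\}$, and the characterization converts the two conditions into $n-i\in w$ and $(n-i)-d\notin w$. Writing $j\eqpd n-i$, we obtain $j\in w$, $j-d\notin w$ and $0\le j-d<j$, which by the reformulation means $d\in A(w)$. This proves $A(w^\dagger)\subseteq A(w)$ for every bracket pattern $w$; applying it to the bracket pattern $w^\dagger$ and invoking $(w^\dagger)^\dagger=w$ from Lemma~\hyperref[lemma:bracket-pattern-technicalities-second-2]{\ref*{lemma:bracket-pattern-technicalities-second}~\ref*{lemma:bracket-pattern-technicalities-second-2}} gives $A(w)\subseteq A(w^\dagger)$, hence equality.

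The only delicate point — the ``main obstacle'', such as it is — is the bookkeeping of ranges in Part~\ref{lemma:range_extensivity_of_bracket_pattern_operations-2}: one has to check that the witness $i$ and the derived witness $j=n-i$ satisfy $0\le i$, $i+d\le n$ and $0\le j-d<j\le n$, so that the complement description of $w^\dagger$ genuinely applies and so that $j-d$ is an admissible non-negative witness for $d\in A(w)$. All of this is a short chain of inequalities; beyond it, no input is needed other than Lemma~\ref{lemma:bracket-pattern-technicalities-second}.
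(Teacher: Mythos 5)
Your proof is correct and takes essentially the same route as the paper: parts (a) and (b) follow directly from the definition (your reformulation $d\in A(w)\iff\exists\,i\in\nnint\backslash w$ with $i+d\in w$ just makes that explicit), and for part (c) you likewise prove $A(w^\dagger)\subseteq A(w)$ by unwinding the definition of $w^\dagger$ and then invoke $(w^\dagger)^\dagger=w$ to get equality — the same two-step structure the paper uses, merely with the index bookkeeping packaged through the complement characterization $k\in w^\dagger\iff \|w\|-k\notin w$.
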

\begin{proof}
  Claims~\ref{lemma:range_extensivity_of_bracket_pattern_operations-1} and~\ref{lemma:range_extensivity_of_bracket_pattern_operations-3} are immediate from the definition. To prove Part~\ref{lemma:range_extensivity_of_bracket_pattern_operations-2} it suffices to show $A(w^\dagger)\subseteq A(w)$ since $(w^\dagger)^\dagger=w$. So, let $j\in w^\dagger$ and $i\in\nnint$ with $i\notin w^\dagger$ and $i<j$, i.e.\ $j-i\in A(w^\dagger)$, be arbitrary. By definition of $w^\dagger$ we find $k \in \nnint$ with $k<\|w\|$, $k\notin w$ and $j=\|w\|-k$. Then, $i<j$ implies $k<\|w\|-i$. Moreover, since $\|w^\dagger\|=\|w\|$  and $i\notin w^\dagger$, we can infer $\|w\|-i\in (w^\dagger)^\dagger=w$. Hence, $j-i=(\|w\|-k)-i=(\|w\|-i)-k$ yields $j-i\in A(w)$ as $\|w\|-i\in w$, $k\notin w$ and $k<\|w\|-i$. 
\end{proof}

The next two propositions are central tools in the proof of the classification of bracket pattern categories. The first one holds for all sets of bracket patterns, the latter only for bracket pattern categories. Recall $\bigcup X\eqpd \bigcup_{Y\in X}Y$ for all systems $X$ of sets.
\begin{proposition}
  \label{proposition:range_of_generated_category}
  For all sets $\mathfrak W$ of bracket patterns holds \[\bigcup A(\llangle \mathfrak W\rrangle)=\bigcup A(\mathfrak W).\]
  Especially, $\bigcup A(\llangle w\rrangle)=A(w)$ for every bracket pattern $w$.
\end{proposition}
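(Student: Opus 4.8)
The plan is to mimic the proof of Remark~\ref{lemma:invariance}. One inclusion is free: since $\mathfrak W\subseteq \llangle \mathfrak W\rrangle$ we get $\bigcup A(\mathfrak W)\subseteq \bigcup A(\llangle \mathfrak W\rrangle)$. Everything therefore reduces to the reverse inclusion, i.e.\ to showing that $A(w)\subseteq \bigcup A(\mathfrak W)$ holds for every $w\in \llangle \mathfrak W\rrangle$.

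To this end I would introduce the set
\[
\mathfrak C\eqpd \bigl\{\,w\mid w\text{ a bracket pattern with }A(w)\subseteq \bigcup A(\mathfrak W)\,\bigr\}
\]
and check that $\mathfrak C$ is a bracket pattern category containing $\mathfrak W$. Membership $\mathfrak W\subseteq \mathfrak C$ is immediate from the definition of $\bigcup A(\mathfrak W)$. Closure of $\mathfrak C$ under superposition, projections and dualisation is precisely what the three parts of Lemma~\ref{lemma:range_extensivity_of_bracket_pattern_operations} deliver: for $w,w'\in \mathfrak C$ one has $A(w\cup w')\subseteq A(w)\cup A(w')\subseteq \bigcup A(\mathfrak W)$ by part~\ref{lemma:range_extensivity_of_bracket_pattern_operations-1}; for $w\in \mathfrak C$ and $j\in w$ one has $A(\cap_j w)\subseteq A(w)\subseteq \bigcup A(\mathfrak W)$ by part~\ref{lemma:range_extensivity_of_bracket_pattern_operations-3}; and for $w\in \mathfrak C$ one has $A(w^\dagger)=A(w)\subseteq \bigcup A(\mathfrak W)$ by part~\ref{lemma:range_extensivity_of_bracket_pattern_operations-2}. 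Since $\llangle \mathfrak W\rrangle$ is by definition the smallest bracket pattern category containing $\mathfrak W$, it follows that $\llangle \mathfrak W\rrangle\subseteq \mathfrak C$, hence $A(w)\subseteq \bigcup A(\mathfrak W)$ for all $w\in \llangle \mathfrak W\rrangle$, which is exactly the reverse inclusion $\bigcup A(\llangle \mathfrak W\rrangle)\subseteq \bigcup A(\mathfrak W)$. The supplementary assertion $\bigcup A(\llangle w\rrangle)=A(w)$ is then just the special case $\mathfrak W=\{w\}$, where $\bigcup A(\{w\})=A(w)$.

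I do not expect any genuine obstacle here; this is the standard "the property cutting out a sub-universe is itself closed, hence contains the generated object" argument, driven entirely by Lemma~\ref{lemma:range_extensivity_of_bracket_pattern_operations}. The only points that merit a word of care are that $\mathfrak C$ really is a family of bracket patterns — which holds because the variable $w$ above ranges over bracket patterns only — and that nothing in the argument uses finiteness of $\mathfrak W$ or of $\llangle \mathfrak W\rrangle$, so the edge case $\mathfrak W=\emptyset$ (where both sides are $\emptyset$) is covered without special treatment.
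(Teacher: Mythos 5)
Your proof is correct and is essentially the paper's argument made explicit: the paper's terse citation of Lemma~\ref{lemma:range_extensivity_of_bracket_pattern_operations} for the inclusion $\bigcup A(\llangle \mathfrak W\rrangle)\subseteq \bigcup A(\mathfrak W)$ unpacks to exactly the ``sub-universe is closed, hence contains the generated object'' argument you give, modeled as you note on the proof of Remark~\ref{lemma:invariance}.
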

\begin{proof}
   Lemma~\ref{lemma:range_extensivity_of_bracket_pattern_operations} shows $\bigcup A(\llangle \mathfrak W\rrangle)\subseteq \bigcup A (\mathfrak W)$. The other inclusion holds since $\mathfrak W\subseteq \llangle \mathfrak W\rrangle$.
 \end{proof}
 
 \begin{proposition}
   \label{proposition:characterization_of_the_range}
  If $\mathfrak W$ is a bracket pattern category, then \[\bigcup A(\mathfrak W)=\{\|w\|\mid w\in \mathfrak W\}.\]
\end{proposition}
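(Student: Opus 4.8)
The plan is to prove the two inclusions of the asserted equality separately. The inclusion $\{\|w\|\mid w\in\mathfrak W\}\subseteq\bigcup A(\mathfrak W)$ is immediate from Lemma~\ref{lemma:necessary_range_elements}: for every $w\in\mathfrak W$ one has $\|A(w)\|=\|w\|$, hence $\|w\|=\max A(w)\in A(w)\subseteq\bigcup A(\mathfrak W)$; this uses nothing about closure. Everything interesting is in the reverse inclusion.

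For $\bigcup A(\mathfrak W)\subseteq\{\|w\|\mid w\in\mathfrak W\}$, fix $d\in\bigcup A(\mathfrak W)$ and choose $w\in\mathfrak W$ with $d\in A(w)$. Unwinding the definition of the completion, $d=j-i$ for some $j\in w$ and some $i\in\nnint$ with $i\notin w$ and $i<j$. The goal is to produce, using only operations under which $\mathfrak W$ is closed, a bracket pattern with frame exactly $d$. I would do this in three steps: first, project to $u\eqpd\cap_j w\in\mathfrak W$, which has $\|u\|=j$ and still satisfies $i\notin u$ and $i<j$; second, dualize to $v\eqpd u^\dagger\in\mathfrak W$, noting that because $i$ is an index in $\nnint$ with $i<\|u\|=j$ and $i\notin u$, the value $\|u\|-i=j-i=d$ occurs in $u^\dagger=v$; third, project again to $w'\eqpd\cap_d v\in\mathfrak W$, which has $\|w'\|=d$ because $d\in v$. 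Thus $d\in\{\|w\|\mid w\in\mathfrak W\}$, completing the inclusion.

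The single point requiring insight is the dualization step: dualization is precisely the operation that turns ``$i$ is a gap strictly below the frame $j$ of $u$'' into ``$j-i$ is an element of $u^\dagger$'', so it is what moves the difference $d=j-i$ out of the completion and into the support of an actual bracket pattern. The surrounding two projections and the frame bookkeeping are routine, needing only $\|\cap_j w\|=j$ and $\|u^\dagger\|=\|u\|$ (Lemma~\ref{lemma:bracket-pattern-technicalities-second}); superposition plays no role here. As a byproduct, combining this with Proposition~\ref{proposition:range_of_generated_category} gives $\bigcup A(\llangle w\rrangle)=A(w)=\{\|v\|\mid v\in\llangle w\rrangle\}$ for a single bracket pattern $w$, a first step toward showing that $A$ is idempotent.
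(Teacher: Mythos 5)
Your proof is correct and follows essentially the same route as the paper: project to $\cap_j w$, dualize, then project to $\cap_{j-i}$, using closure of $\mathfrak W$ under all three operations and the easy direction from Lemma~\ref{lemma:necessary_range_elements}. The only difference is cosmetic — you name the intermediate patterns and isolate the dualization step as the conceptual crux, whereas the paper writes the composite $\cap_{j-i}((\cap_j w)^\dagger)$ directly.
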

\begin{proof}
  The inclusion \(\{\|w\|\mid w\in \mathfrak W\}\subseteq \bigcup A(\mathfrak W)\) follows from Lemma~\ref{lemma:necessary_range_elements}. Conversely, let $w\in \mathfrak W$, $j\in w$ and $i\in  \mathbb N_0$ with $i<\|w\|$, $i\notin w$ and $i<j$, i.e. $j-i\in A(w)$, be arbitrary. From $j\in w$ follows $(\cap_j w)^\dagger\in \mathfrak W$. Now, $i\notin w$ and $i<j$ imply $j-i\in (\cap_j w)^\dagger$. Hence, \[j-i=\|\cap_{j-i}((\cap_j w)^\dagger)\|\] and $\cap_{j-i}((\cap_j w)^\dagger)\in \mathfrak W$ prove the claim.
\end{proof}
\subsection{Generators of Bracket Pattern Categories}
We now see, using these two results, that bracket pattern categories are indeed closed under the completion operation.
\begin{lemma}
  \label{lemma:range_construction}
  For all bracket patterns $w$ holds \[A(w)=\bigcup \llangle w\rrangle\in \llangle w\rrangle.\]
\end{lemma}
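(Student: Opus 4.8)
The plan is to deduce the statement from the two preceding propositions together with the closure of $\llangle w\rrangle$ under superposition. First I would record the identity
\[
  A(w)=\bigcup A(\llangle w\rrangle)=\{\,\|w'\|\mid w'\in\llangle w\rrangle\,\},
\]
where the first equality is the special case of Proposition~\ref{proposition:range_of_generated_category} and the second is Proposition~\ref{proposition:characterization_of_the_range} applied to the bracket pattern category $\llangle w\rrangle$.

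Next I would establish $A(w)=\bigcup\llangle w\rrangle$ by two inclusions. For ``$\supseteq$'', every element of $A(w)$ is of the form $\|w'\|=\max(w')$ for some $w'\in\llangle w\rrangle$ by the displayed identity, hence lies in $w'$ and so in $\bigcup\llangle w\rrangle$. For ``$\subseteq$'', given any $w'\in\llangle w\rrangle$, Lemma~\ref{lemma:necessary_range_elements} gives $w'\subseteq A(w')$, and $A(w')\subseteq\bigcup A(\llangle w\rrangle)=A(w)$; taking the union over $w'\in\llangle w\rrangle$ yields $\bigcup\llangle w\rrangle\subseteq A(w)$.

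Finally I would show $A(w)\in\llangle w\rrangle$. By Lemma~\ref{lemma:necessary_range_elements} the set $A(w)$ satisfies $1=\min(A(w))$ and $\|A(w)\|=\|w\|$, so $A(w)$ is a finite, non-empty subset of $\{1,\dots,\|w\|\}$. Since $A(w)=\bigcup\llangle w\rrangle$, for each $m\in A(w)$ one may pick $w_m\in\llangle w\rrangle$ with $m\in w_m$; then $\bigcup_{m\in A(w)}w_m=A(w)$ (``$\supseteq$'' is clear, ``$\subseteq$'' because each $w_m\subseteq\bigcup\llangle w\rrangle=A(w)$), and this is a union of finitely many members of $\llangle w\rrangle$, hence lies in $\llangle w\rrangle$ by iterated superposition.

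The real content of the lemma sits in Propositions~\ref{proposition:range_of_generated_category} and~\ref{proposition:characterization_of_the_range}, which are already available, so this is essentially a bookkeeping argument. The one point that needs a little care is the last step: superposition is only a binary operation, so to conclude that the full union $\bigcup\llangle w\rrangle$ lies in $\llangle w\rrangle$ one must first use the finiteness of $A(w)$ --- guaranteed by Lemma~\ref{lemma:necessary_range_elements} --- to rewrite this a priori infinite union as a finite one.
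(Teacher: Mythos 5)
Your proof is correct and takes essentially the same route as the paper's: both use Propositions~\ref{proposition:range_of_generated_category} and~\ref{proposition:characterization_of_the_range} to identify $A(w)$ with $\bigcup\llangle w\rrangle$, and then a finiteness argument together with closure under superposition to get membership in $\llangle w\rrangle$. The only (cosmetic) difference is the source of the finiteness: the paper appeals directly to the fact that $\llangle w\rrangle$ itself is finite and deduces $\bigcup\llangle w\rrangle\in\llangle w\rrangle$ before identifying it with $A(w)$, whereas you first establish the equality and then use the finiteness of $A(w)$ (guaranteed by Lemma~\ref{lemma:necessary_range_elements}) to rewrite $\bigcup\llangle w\rrangle$ as a finite union of members of $\llangle w\rrangle$ --- both are equally valid.
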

\begin{proof}
  The set \[\tilde w:=\bigcup \llangle w\rrangle\] is a bracket pattern contained in the category $\llangle w\rrangle$ because $\llangle w\rrangle$ is finite and closed under superposition.  We show $\tilde w=A(w)$.\par
  According to Propositions~\ref{proposition:range_of_generated_category} and \ref{proposition:characterization_of_the_range},
\[\{ \|w'\|\mid w'\in \llangle w\rrangle\}=\bigcup A(\llangle w\rrangle)=A(w).\]
 As $\|w'\|\in w'\subseteq \tilde w$ for every $w'\in \llangle w\rrangle$, we conclude $\tilde w\supseteq A(w)$.\par Conversely, $\tilde w\in\llangle w\rrangle$ implies $A(\tilde w)\in A(\llangle w\rrangle)$ and thus $A(\tilde w)\subseteq \bigcup A(\llangle w\rrangle)=A(w)$. By Lemma~\ref{lemma:necessary_range_elements} holds $\tilde w\subseteq A(\tilde w)$. Thus we have  shown $\tilde w\subseteq A(w)$, which completes the proof.
\end{proof}
In the following most important step of the classification we show that the completion allows us to give a full characterization of all singly generated bracket pattern categories. 
\begin{lemma}
  \label{lemma:characterization_of_generated_bracket_pattern_category}
  For every bracket pattern $w$ holds \[\llangle w\rrangle=\{ w'\mid w'\text{ bracket pattern}, A(w')\subseteq A(w)\}.\]
\end{lemma}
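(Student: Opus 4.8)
The plan is to show the two inclusions separately, with the "$\subseteq$" direction being essentially bookkeeping and the "$\supseteq$" direction carrying the real content. For "$\subseteq$", let $w'\in\llangle w\rrangle$. By Proposition~\ref{proposition:range_of_generated_category} we have $\bigcup A(\llangle w\rrangle)=A(w)$, and since $w'\in\llangle w\rrangle$ we get $A(w')\subseteq\bigcup A(\llangle w\rrangle)=A(w)$; so $w'$ lies in the right-hand set. This uses only the already-established extensivity-type behaviour of $A$ under the three generating operations (Lemma~\ref{lemma:range_extensivity_of_bracket_pattern_operations}).

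For the reverse inclusion "$\supseteq$", let $w'$ be a bracket pattern with $A(w')\subseteq A(w)$; I must produce $w'$ from $w$ using superposition, projection and dualisation. The key object is $\tilde w:=\bigcup\llangle w\rrangle$, which by Lemma~\ref{lemma:range_construction} equals $A(w)$ and lies in $\llangle w\rrangle$. The strategy is to realise each element $j\in w'$ individually inside $\llangle w\rrangle$ as a single-element-framed pattern, and then reassemble $w'$ by superposition. Concretely: fix $j\in w'$. Since $A(w')\subseteq A(w)=\tilde w$ and $w'\subseteq A(w')$ (Lemma~\ref{lemma:necessary_range_elements}), we have $j\in\tilde w=A(w)$, so there are $k\in w$ and $\ell\in\mathbb N_0$ with $\ell\notin w$, $\ell<k$, $j=k-\ell$. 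Then $\cap_k w\in\llangle w\rrangle$, hence $(\cap_k w)^\dagger\in\llangle w\rrangle$, and by the definition of the dual, $\ell\notin w$ together with $\ell<k=\|\cap_k w\|$ gives $k-\ell=j\in(\cap_k w)^\dagger$; therefore $\cap_j\bigl((\cap_k w)^\dagger\bigr)\in\llangle w\rrangle$ is a bracket pattern with frame exactly $j$ (this is the same maneuver used in the proof of Proposition~\ref{proposition:characterization_of_the_range}). Now for each $j\in w'$ pick such a pattern $w'_{(j)}\in\llangle w\rrangle$ with $\|w'_{(j)}\|=j$ — but note $w'_{(j)}$ may contain unwanted small elements below $j$, so I cannot just union them blindly.

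The remaining obstacle — and I expect this to be the crux — is controlling those extra small elements so that the superposition of the $w'_{(j)}$ reproduces $w'$ exactly rather than something larger. The cleanest fix: having obtained, for each $j\in w'$, some pattern in $\llangle w\rrangle$ with frame $j$, observe that $\cap_j$ applied to it still has frame $j$; more importantly, I should intersect the bookkeeping with $w'$ itself. Since $w'$ is a fixed finite set and $\|w'\|\in w'\subseteq A(w')\subseteq A(w)$, I can first realise a pattern $u\in\llangle w\rrangle$ with $\|u\|=\|w'\|$; then for each $j\in w'$ realise $u_j\in\llangle w\rrangle$ with $\|u_j\|=j$; finally take $\bigcup_{j\in w'}\cap_j u_j$. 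This union lies in $\llangle w\rrangle$, contains every $j\in w'$ (as the frame of the corresponding summand), and I must check it contains nothing outside $w'$. Here I would invoke that, by the "$\subseteq$" direction already proved, every summand $\cap_j u_j$ has completion contained in $A(w)$, but that alone does not bound the summand by $w'$; so the honest route is instead to choose each $u_j$ more carefully — namely replace the naive realisation of $j$ by $\cap_j\bigl((\cap_{k_j}w)^\dagger\bigr)$ and then, crucially, apply $\cap$ again relative to consecutive elements of $w'$, peeling off exactly the elements of $w'$ one at a time from largest to smallest. I would write this as an induction on $|w'|$: if $w'=\{j\}$ the construction above gives a pattern with frame $j$ whose own completion is inside $A(w)$, and projecting it down to $j$ and then intersecting with the inductively constructed pattern for $w'\setminus\{\max w'\}$ (whose frame is the second-largest element of $w'$) via superposition yields $w'$. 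The base case and the superposition step are routine; the one genuinely delicate point is verifying that each single-element realisation can be trimmed (by a projection) to contribute no element of $\mathbb N\setminus w'$ that is not already contributed by a later step — and this follows because any such spurious element $m$ would satisfy $m\in A(w)$ with $m<j$, and we only ever keep, in the final union, elements that are frames of the pieces, all of which were chosen to be elements of $w'$. Assembling these observations gives $w'\in\llangle w\rrangle$, completing the reverse inclusion and hence the lemma.
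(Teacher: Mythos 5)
Your ``$\subseteq$'' direction is fine and matches the paper: Proposition~\ref{proposition:range_of_generated_category} gives $A(w')\subseteq\bigcup A(\llangle w\rrangle)=A(w)$ for every $w'\in\llangle w\rrangle$.

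Your ``$\supseteq$'' direction has a genuine gap, and the flaw is exactly where you flagged uncertainty. Your plan is to realise each $j\in w'$ as a singleton-framed pattern and reassemble $w'$ by superposition, trimming spurious small elements by projection. Two things go wrong. First, superposition keeps \emph{every} element of every summand, not just the frames; your concluding claim ``we only ever keep, in the final union, elements that are frames of the pieces'' is simply false, and projection cannot help because $\cap_j$ only discards elements \emph{above} $j$, never elements strictly below the frame. Second, and more fundamentally, the singletons you want to use need not lie in $\llangle w\rrangle$ at all. Take $w=w'=\{1,2,4\}$. Then $A(w)=\{1,2,4\}$, but $A(\{4\})=\{1,2,3,4\}\not\subseteq\{1,2,4\}$, so by the ``$\subseteq$'' direction you already proved, $\{4\}\notin\llangle\{1,2,4\}\rrangle$. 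So you cannot even start the induction by realising $\{\max w'\}$, let alone control the overshoot after a union.

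The missing idea is a mechanism to \emph{remove} unwanted interior elements, and none of your three moves (superposition, projection, dualisation) does this directly. The paper's proof supplies it by a double dual: it forms
\[
\tilde w:=\Bigl(A(w)^\dagger\ \cup\ \bigcup_{i\in A(w)\setminus w'}\bigl(\cap_{\|w\|-i}A(w)\bigr)^\dagger\Bigr)^\dagger,
\]
so that, after dualising back, the outer $\dagger$ converts a union of complemented sets into (effectively) an intersection, which is how one carves out exactly $w'$ inside $A(w)$. The paper then verifies $\tilde w=w'$ and $\tilde w\in\llangle w\rrangle$ in the case $\|w'\|=\|w\|$, and reduces the general case to this one by first projecting $A(w)$ down to $\|w'\|$. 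Without some analogue of this De Morgan / double-dual manoeuvre, the reverse inclusion does not go through along the route you sketched.
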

\begin{proof}
  It is clear that $\llangle w\rrangle\subseteq \{w'\mid w'\text{ bracket pattern}, A(w')\subseteq A(w)\}$ because of Proposition~\ref{proposition:range_of_generated_category}. The reverse inclusion we prove in two steps. Let $w'$ be a bracket pattern with $A(w')\subseteq A(w)$.
  \par
  \textbf{Case~1:}
    First, assume $\|w'\|=\|w\|$. Then, we define
    \begin{align*}
      \tilde w:= \left( A(w)^\dagger \cup \bigcup_{i\in A(w)\backslash w'}(\cap_{\|w\|-i}A(w))^\dagger \right)^\dagger
    \end{align*}
    and show
    \begin{enumerate}[label=(\roman*)]
    \item For all $ i\in A(w)\backslash w'$ holds $\|w\|-i\in A(w)$  and $\|\cap_{\|w\|-i}A(w)\|=\|w\|-i$.
    \item$\tilde w\in\llangle w\rrangle$.
    \item $\|\tilde w\|=\|w\|$.
      \item $w'=\tilde w$.
    \end{enumerate}
  Together \ref{item:characterization_of_generated_bracket_pattern_category-item_1ii} and \ref{item:characterization_of_generated_bracket_pattern_category-item_1iv} then prove $w'=\tilde w\in \llangle w\rrangle$.
  \begin{enumerate}[wide,label=(\roman*)]
  \item \label{item:characterization_of_generated_bracket_pattern_category-item_1i}
    Let $i\in A(w)\backslash w'$ be arbitrary. Then, $\|w\|-i\in A(w')$ since we have assumed $\|w\|=\|w'\|\in w'$ and $i\notin w'$. Now, the other assumption $A(w')\subseteq A(w)$ implies the first part $\|w\|-i\in A(w)$ of Claim~\ref{item:characterization_of_generated_bracket_pattern_category-item_1i}. The remaining assertion $\|\cap_{\|w\|-i}A(w)\|=\|w\|-i$ is then clear by definition of the projection $\cap_{\|w\|-i}A(w)$.
  \item \label{item:characterization_of_generated_bracket_pattern_category-item_1ii}
    By Lemma~\ref{lemma:range_construction} we know $A(w)\in \llangle w\rrangle$. Using  Claim~\ref{item:characterization_of_generated_bracket_pattern_category-item_1i} we can then conclude  $\bigcap_{\|w\|-i}A(w)\in \llangle w\rrangle$ for all $i \in A(w)\backslash w'$, and thus $\tilde w\in\llangle w\rrangle$ because $\llangle w\rrangle$ is a bracket pattern category.
  \item \label{item:characterization_of_generated_bracket_pattern_category-item_1iii}
Lemma~\hyperref[lemma:bracket-pattern-technicalities-second-1]{\ref*{lemma:bracket-pattern-technicalities-second}~\ref*{lemma:bracket-pattern-technicalities-second-1}} shows  $\|(\cap_{\|w\|-i}A(w))^\dagger\|=\|\cap_{\|w\|-i}A(w)\|$ for all $i\in A(w)\backslash w'$, which implies
    \begin{align*}
      \left\|\bigcup_{i\in A(w)\backslash w'}(\cap_{\|w\|-i}A(w))^\dagger\right\|=\max_{i\in A(w)\backslash w'}(\|w\|-i)< \|w\|
    \end{align*}
    by Claim~\ref{item:characterization_of_generated_bracket_pattern_category-item_1i} and Lemma~\ref{lemma:necessary_range_elements}.
    In contrast, Lemma~\hyperref[lemma:bracket-pattern-technicalities-second-1]{\ref*{lemma:bracket-pattern-technicalities-second}~\ref*{lemma:bracket-pattern-technicalities-second-1}} guarantees $\|A(w)^\dagger\|=\|A(w)\|=\|w\|$. So, both together imply $\|\tilde w^\dagger\|=\|w\|$ and thus, using Lemma~\hyperref[lemma:bracket-pattern-technicalities-second-1]{\ref*{lemma:bracket-pattern-technicalities-second}~\ref*{lemma:bracket-pattern-technicalities-second-1}} one last time, $\|\tilde w\|=\|w\|$, as claimed.
  \item \label{item:characterization_of_generated_bracket_pattern_category-item_1iv}
    Let  $j\in \pint$ with $j<\|w\|$ be arbitrary.  We show $j\in \tilde w$ if and only if $j\in w'$.
For all bracket patterns $w_0$ and all $k\in \pint$ with $k<\|w_0\|$ holds by definition of the dual:
    $k\notin w_0^\dagger$ if and only if  $\|w_0\|-k\in w_0$. Hence, for
all $i\in A(w)\backslash w'$ such that $i<j$ follows by Claim~\ref{item:characterization_of_generated_bracket_pattern_category-item_1i} 
    \begin{align*}
      \|w\|-j\notin (\cap_{\|w\|-i}A(w))^\dagger \iff (\|w\|-i)-(\|w\|-j)\in \cap_{\|w\|-i}A(w)\iff j-i\in A(w).
    \end{align*}
    At the same time, $\|w\|-j\in (\cap_{\|w\|-j}A(w))^\dagger$. Combining the two, we conclude for all $i\in A(w)\backslash w'$
    \begin{align*}
      \|w\|-j\notin (\cap_{\|w\|-i}A(w))^\dagger \iff i\neq j \text{ and }(i<j\implies j-i\in A(w)).
    \end{align*}
    Thus, using Claim~\ref{item:characterization_of_generated_bracket_pattern_category-item_1iii} we infer 
    \begin{align*}
      j\in \tilde w&\iff \|w\|-j\notin A(w)^\dagger\text{ and for all } i\in A(w)\backslash w': \|w\|-j\notin (\cap_{\|w\|-i}A(w))^\dagger\\
      &\iff j\in A(w)\text{ and for all }i\in A(w)\backslash w': i\neq j\text{ and } (i<j\implies j-i\in A(w)).
    \end{align*}
    We prove now that this last statement is equivalent to $j\in w'$.\par
    On the one hand, if $j\in A(w)$ and $i\neq j$ for all $i\in A(w)\backslash w'$, then $j\in w'$ since $w'\subseteq A(w')\subseteq A(w)$ by Lemma~\ref{lemma:necessary_range_elements} and assumption on $w'$. Hence, $j\in \tilde w$ implies $j\in w'$.\par
    Conversely, if $j\in w'$, then  it holds both $j\in A(w)$ and $j-i\in A(w')\subseteq A(w)$ for all $i\in A(w)\backslash w'$ with $i<j$ by definition of $A(w')$ and, again, our assumption on $w'$. So, $j\in w'$ also implies $j\in \tilde w$.
    \par That concludes the proof of Claim~\ref{item:characterization_of_generated_bracket_pattern_category-item_1iv} and thus  shows $w'\in \llangle w\rrangle$ in case $\|w'\|=\|w\|$.
 \end{enumerate}
\par
\textbf{Case~2:}
Now, suppose $\|w'\|\neq\|w\|$. Lemma~\ref{lemma:necessary_range_elements} and the assumption  $A(w')\subseteq A(w)$ guarantee $\|w'\|=\|A(w')\|\leq \|A(w)\|$. Hence and because $\|w'\|\in A(w')\subseteq A(w)$, we obtain a well-defined bracket pattern of $\llangle A(w)\rrangle$ by putting
\begin{align*}
  \tilde w\eqpd \cap_{\|w'\|}A(w).
\end{align*}
We now apply the result of Case~1 to $(w',A(\tilde w))$ here in the roles of $(w',w)$ there. This is possible for the following two reasons: Firstly, $\|w'\|=\|\tilde w\|=\|A(\tilde w)\|$ by Lemma~\ref{lemma:necessary_range_elements}. Secondly, $A(w')\subseteq A(A(\tilde w))$ holds as well:
From $A(w')\subseteq \tilde w$ because $A(w')\subseteq A(w)$, $\tilde w\subseteq A(w)$ and $\|A(w')\|=\|w'\|$ follows $A(w')\subseteq \tilde w \subseteq A(\tilde w)\subseteq A(A(\tilde w))$  where we have employed Lemma~\ref{lemma:necessary_range_elements} twice. So, indeed, Case~1 yields $w'\in \llangle A(\tilde w)\rrangle$. We conclude since $\tilde w\in \llangle A(w)\rrangle$
\begin{align*}
  w'\in \llangle A(\tilde w)\rrangle\subseteq \llangle \tilde w\rrangle\subseteq \llangle A(w)\rrangle\subseteq \llangle w\rrangle,
\end{align*}
using Lemma~\ref{lemma:range_construction} for the first and last inclusions. That was to be proven.
\end{proof}

We can generalize Lemma~\ref{lemma:characterization_of_generated_bracket_pattern_category} to characterize all bracket pattern categories. In addition, for finite bracket pattern categories which are finite we can identify a canonical generator.
\begin{proposition}
  \label{proposition:characterization_of_bracket_pattern_categories}
  Let  $\mathfrak W$ be an arbitrary bracket pattern category.
  \begin{enumerate}[label=(\alph*),labelwidth=!]
  \item\label{item:characterization_of_bracket_pattern_categories-1}
    It holds \[\mathfrak W=\left\{ w\mid w\text{ bracket pattern}, A(w)\subseteq \bigcup A(\mathfrak W)\right\}.\]
  \item\label{item:characterization_of_bracket_pattern_categories-2} If $\mathfrak W$ is finite, then $\bigcup \mathfrak W\in \mathfrak W$ and \[\mathfrak W=\llangle \bigcup\mathfrak W\rrangle.\]
              Especially, finite bracket pattern categories are singly generated.
    \end{enumerate}
\end{proposition}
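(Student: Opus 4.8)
The plan is to establish Part~\ref{item:characterization_of_bracket_pattern_categories-1} first, for arbitrary bracket pattern categories, and then read off Part~\ref{item:characterization_of_bracket_pattern_categories-2} from it in the finite case. Throughout I would keep in mind the warning recorded before Lemma~\ref{lemma:necessary_range_elements}: $A$ is extensive but \emph{not} monotone, so set-theoretic inclusions between bracket patterns do not transfer to their completions; the way around this is Proposition~\ref{proposition:characterization_of_the_range}.

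For Part~\ref{item:characterization_of_bracket_pattern_categories-1}, the inclusion $\mathfrak W\subseteq\{w\mid A(w)\subseteq\bigcup A(\mathfrak W)\}$ is immediate, since $w\in\mathfrak W$ forces $A(w)\in A(\mathfrak W)$. For the reverse inclusion I would take a bracket pattern $w'$ with $A(w')\subseteq\bigcup A(\mathfrak W)$ and argue as follows. Because $\mathfrak W$ is a bracket pattern category, Proposition~\ref{proposition:characterization_of_the_range} gives $\bigcup A(\mathfrak W)=\{\|v\|\mid v\in\mathfrak W\}$, so for each $a\in A(w')$ there is $v_a\in\mathfrak W$ with $\|v_a\|=a$, hence $a\in v_a$. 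Since $A(w')$ is finite and $\mathfrak W$ is closed under superposition, $v\eqpd\bigcup_{a\in A(w')}v_a$ is again a bracket pattern lying in $\mathfrak W$, and $A(w')\subseteq v$ by construction. Now Lemma~\ref{lemma:necessary_range_elements} yields $v\subseteq A(v)$, so $A(w')\subseteq A(v)$, and Lemma~\ref{lemma:characterization_of_generated_bracket_pattern_category} gives $w'\in\llangle v\rrangle$. As $v\in\mathfrak W$ and $\mathfrak W$ is a bracket pattern category, $\llangle v\rrangle\subseteq\mathfrak W$, so $w'\in\mathfrak W$. (If $\mathfrak W=\emptyset$ both sides of the claimed identity are empty, because $A(w)\supseteq w\neq\emptyset$ for every bracket pattern $w$.)

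For Part~\ref{item:characterization_of_bracket_pattern_categories-2}, assume $\mathfrak W$ is finite (and, for the first assertion, non-empty). Put $w^*\eqpd\bigcup\mathfrak W$. It is a non-empty finite subset of $\pint$, hence a bracket pattern, and since $\mathfrak W$ is a finite set closed under superposition, $w^*\in\mathfrak W$; this is the first claim. To identify the generator I would show $\bigcup A(\mathfrak W)=A(w^*)$: one inclusion is $A(w^*)\in A(\mathfrak W)$ (as $w^*\in\mathfrak W$), and the other follows from Proposition~\ref{proposition:characterization_of_the_range} together with $\|v\|\in v\subseteq w^*\subseteq A(w^*)$ (using Lemma~\ref{lemma:necessary_range_elements}), giving $\bigcup A(\mathfrak W)=\{\|v\|\mid v\in\mathfrak W\}\subseteq A(w^*)$. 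Then Part~\ref{item:characterization_of_bracket_pattern_categories-1} and Lemma~\ref{lemma:characterization_of_generated_bracket_pattern_category} combine to
\[\mathfrak W=\{w\mid A(w)\subseteq\textstyle\bigcup A(\mathfrak W)\}=\{w\mid A(w)\subseteq A(w^*)\}=\llangle w^*\rrangle,\]
i.e.\ $\mathfrak W=\llangle\bigcup\mathfrak W\rrangle$, so finite bracket pattern categories are singly generated.

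The only genuinely delicate point is the one already flagged: the failure of monotonicity of $A$ means one cannot pass from $v\subseteq w^*$ to $A(v)\subseteq A(w^*)$ directly. Proposition~\ref{proposition:characterization_of_the_range} is exactly what removes this difficulty, by trading the unwieldy set $\bigcup A(\mathfrak W)$ for the frame set $\{\|v\|\mid v\in\mathfrak W\}$, where the trivial containment $\|v\|\in v$ suffices; the rest is bookkeeping with the extensivity of $A$ (Lemma~\ref{lemma:necessary_range_elements}) and the characterization of singly generated bracket pattern categories (Lemma~\ref{lemma:characterization_of_generated_bracket_pattern_category}).
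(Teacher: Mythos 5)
Your proof is correct and follows essentially the same route as the paper's: for Part~(a) you extract a finite subfamily of $\mathfrak W$, superpose to land inside $\mathfrak W$, and use the extensivity of $A$ together with Lemma~\ref{lemma:characterization_of_generated_bracket_pattern_category}; for Part~(b) you reduce, via Proposition~\ref{proposition:characterization_of_the_range}, the comparison $\bigcup A(\mathfrak W)\subseteq A(\bigcup\mathfrak W)$ to the trivial containment $\|v\|\in v\subseteq\bigcup\mathfrak W$. Your explicit choice of $v_a$ per element $a\in A(w')$ is just a more concrete rendering of the paper's unspecified finite set $\mathfrak W_w$, and your remark on the empty case is a small, sound addition.
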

\begin{proof}
  \begin{enumerate}[labelwidth=!,label=(\alph*), wide]
  \item  The inclusion $\mathfrak W\subseteq \left\{ w\mid w\text{ bracket pattern}, A(w)\subseteq \bigcup A(\mathfrak W)   \right\}$ is clear by definition. Conversely, let $w$ be a bracket pattern with $A(w)\subseteq \bigcup A(\mathfrak W)$. By Proposition~\ref{proposition:characterization_of_the_range}, $A(w)\subseteq \{\|w'\|\mid w'\in \mathfrak W\}$. Because $A(w)$ is finite, there exists a finite set $\mathfrak W_w\subseteq \mathfrak W$ of bracket patterns such that $A(w)\subseteq \{\|w'\|\mid w'\in \mathfrak W_w\}$. As $\mathfrak W_w$ is finite and $\mathfrak W$ is closed under superpositions, $\bigcup \mathfrak W_w\in \mathfrak W$. Because $\{\|w'\|\mid w'\in \mathfrak W_w\}\subseteq \bigcup \mathfrak W_w$ by definition, $A(w)\subseteq \bigcup \mathfrak W_w$. By Lemma~\ref{lemma:necessary_range_elements} then, $A(w)\subseteq \bigcup \mathfrak W_w \subseteq A(\bigcup \mathfrak W_w )$. Lemma~\ref{lemma:characterization_of_generated_bracket_pattern_category} hence shows $w\in \llangle \bigcup \mathfrak W_w\rrangle$. Now, $\bigcup \mathfrak W_w\in \mathfrak W$ shows $\llangle \bigcup \mathfrak W_w\rrangle\subseteq \mathfrak W$ and thus $w\in \mathfrak W$, which is what we needed to prove.
  \item
 Since $\mathfrak W$ is closed under finite superpositions, $\bigcup \mathfrak W\in \mathfrak W$, which also implies $\llangle \bigcup \mathfrak W\rrangle\subseteq \mathfrak W$. For the converse inclusion we use the characteriztation of $\llangle \mathfrak W\rrangle$ from Lemma~\ref{lemma:characterization_of_generated_bracket_pattern_category}. Hence, by Part~\ref{item:characterization_of_bracket_pattern_categories-1}   all we have to prove is $\bigcup A(\mathfrak W)\subseteq A(\bigcup \mathfrak W)$. But by Lemma~\ref{lemma:necessary_range_elements} we know $\{\|w\|\mid w\in \mathfrak W\}\subseteq \bigcup \mathfrak W\subseteq A(\bigcup \mathfrak W)$ and Proposition~\ref{proposition:characterization_of_the_range} completes the proof.\qedhere
    \end{enumerate}
  \end{proof}
  \begin{remark}
    The proof of Proposition~\hyperref[item:characterization_of_bracket_pattern_categories-1]{\ref*{proposition:characterization_of_bracket_pattern_categories}~\ref*{item:characterization_of_bracket_pattern_categories-1}} actually reveals $\bigcup A(\mathfrak W)=A(\bigcup \mathfrak W)$ for all finite bracket pattern categories $\mathfrak W$ since $A(\bigcup \mathfrak W)\in A(\mathfrak W)$.
  \end{remark}
As announced, we prove that the completion operation is idempotent, a fact we will need later.
\begin{lemma}
  \label{lemma:range_idempotency}
  For all bracket patterns $w$ holds $A(A(w))=A(w)$. In particular, $\llangle A(w)\rrangle=\llangle w\rrangle$.
\end{lemma}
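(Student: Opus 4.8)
The plan is to read this lemma off the machinery already in place, specifically the extensivity statement of Lemma~\ref{lemma:necessary_range_elements}, the identification $A(w)=\bigcup\llangle w\rrangle$ of Lemma~\ref{lemma:range_construction}, and the explicit description of singly generated bracket pattern categories in Lemma~\ref{lemma:characterization_of_generated_bracket_pattern_category}. The first preliminary point is simply that $A(w)$ is itself a legitimate bracket pattern: by Lemma~\ref{lemma:necessary_range_elements} it is non-empty (it contains $1$, and in fact $w\subseteq A(w)$) and finite (bounded above by $\|w\|=\|A(w)\|$), hence a non-empty finite subset of $\pint$. This justifies applying the earlier lemmas with $A(w)$ in place of a bracket pattern.

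For the inclusion $A(w)\subseteq A(A(w))$ there is nothing to do: it is the last assertion of Lemma~\ref{lemma:necessary_range_elements}, applied either to $w$ directly or to the bracket pattern $A(w)$. For the reverse inclusion $A(A(w))\subseteq A(w)$, I would observe that $A(w)\in\llangle w\rrangle$ by Lemma~\ref{lemma:range_construction}, and then feed this membership into the characterization $\llangle w\rrangle=\{w'\mid w'\text{ bracket pattern},\ A(w')\subseteq A(w)\}$ of Lemma~\ref{lemma:characterization_of_generated_bracket_pattern_category}, taking $w'\eqpd A(w)$. That membership criterion says precisely $A(A(w))\subseteq A(w)$. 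Combining the two inclusions gives $A(A(w))=A(w)$.

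For the ``in particular'' clause I would apply Lemma~\ref{lemma:characterization_of_generated_bracket_pattern_category} once more, now to the bracket pattern $A(w)$: it yields $\llangle A(w)\rrangle=\{w'\mid w'\text{ bracket pattern},\ A(w')\subseteq A(A(w))\}$. Substituting the identity $A(A(w))=A(w)$ just proven, the right-hand side becomes $\{w'\mid w'\text{ bracket pattern},\ A(w')\subseteq A(w)\}$, which by the same lemma equals $\llangle w\rrangle$. Hence $\llangle A(w)\rrangle=\llangle w\rrangle$.

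I do not expect any real obstacle: the statement is essentially a formal consequence of the three cited lemmas, and the proof is a two-line chase of inclusions. The only thing requiring a moment's attention is the bookkeeping remark above, namely verifying that $A(w)$ meets the standing hypothesis ``bracket pattern'' under which Lemmas~\ref{lemma:range_construction} and~\ref{lemma:characterization_of_generated_bracket_pattern_category} are stated; this is handled immediately by Lemma~\ref{lemma:necessary_range_elements}.
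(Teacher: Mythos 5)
Your proof is correct and follows essentially the same path as the paper's: both get $A(w)\subseteq A(A(w))$ from Lemma~\ref{lemma:necessary_range_elements}, both get the reverse inclusion from $A(w)\in\llangle w\rrangle$ (Lemma~\ref{lemma:range_construction}), and both cite Lemma~\ref{lemma:characterization_of_generated_bracket_pattern_category} for the ``in particular'' clause. The only cosmetic difference is that the paper routes $A(w)\in\llangle w\rrangle\implies A(A(w))\subseteq A(w)$ through Proposition~\ref{proposition:range_of_generated_category} while you route it through Lemma~\ref{lemma:characterization_of_generated_bracket_pattern_category}; these are interchangeable here.
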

\begin{proof}
  Combining $A(w)\in  \llangle w\rrangle$, by Lemma~\ref{lemma:range_construction}, and, thus, $A(A(w))\subseteq \bigcup A(\llangle w\rrangle)=A(w)$ by Proposition~\ref{proposition:range_of_generated_category}  shows $A(A(w))\subseteq A(w)$. Lemma~\ref{lemma:necessary_range_elements} guarantees $A(w)\subseteq A(A(w))$, from which the claim now follows. Finally, $\llangle A(w)\rrangle=\llangle w\rrangle$ holds by Lemma~\ref{lemma:characterization_of_generated_bracket_pattern_category}.
\end{proof}
\subsection{Link to Submonoids of $(\nnint, +)$}
The definition of the completion gives it essential properties which help to reveal the nature of bracket pattern categories in the following result. Recall that a \emph{monoid} is a semigroup with neutral element.

\begin{lemma}
  \label{lemma:semigroup_property}
  \begin{enumerate}[label=(\alph*),labelwidth=!]
  \item   \label{item:semigroup_property-1}
    If $w$ is a bracket pattern, $\mathbb N_0\backslash A(w)$ is a submonoid of $(\mathbb N_0,+)$.
  \item   \label{item:semigroup_property-2}
    For every submonoid $M$ of $(\mathbb N_0,+)$ such that $\mathbb N_0\backslash M$ is finite holds \[A(\mathbb N_0\backslash M)=\mathbb N_0\backslash M.\]
    \end{enumerate}
    Especially, for a bracket pattern $w$ holds $w=A(w)$ if and only if $w=\nnint\backslash M$ for some submonoid $M$ of $(\mathbb N_0,+)$.
\end{lemma}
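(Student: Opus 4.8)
The plan is to prove the two enumerated claims and then read off the concluding equivalence. Throughout I would keep in mind two elementary facts: a bracket pattern $w$ is a subset of $\pint$, so $0\notin w$; and every element of $A(w)$ has the shape $j-i$ with $i<j$, hence is $\ge 1$, so $0\notin A(w)$.

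For Part~\ref{item:semigroup_property-1}, the neutral element of $(\nnint,+)$ lies in $\nnint\setminus A(w)$ for free, since $0\notin A(w)$, so only closure under addition needs work. I would phrase this contrapositively: it suffices to show that $a+b\in A(w)$ forces $a\in A(w)$ or $b\in A(w)$ for all $a,b\in\nnint$. The cases $a=0$ and $b=0$ are immediate. If $a,b\ge 1$ and $a+b=j-i$ with $j\in w$, $i\notin w$ and $i<j$, then I would introduce $k\eqpd i+a$; since $a,b\ge 1$ this satisfies $i<k<j$. A single case split finishes it: if $k\notin w$, then $j\in w$, $k\notin w$, $k<j$ and $b=j-k$ witness $b\in A(w)$; if $k\in w$, then $i\notin w$, $k\in w$, $i<k$ and $a=k-i$ witness $a\in A(w)$. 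This $k=i+a$ trick is the only step in the whole lemma that is not bookkeeping.

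For Part~\ref{item:semigroup_property-2}, put $w\eqpd\nnint\setminus M$; the degenerate case $M=\nnint$ makes the claimed identity read $\emptyset=\emptyset$, so assume $w\neq\emptyset$. Since $0\in M$, $w$ is then a non-empty finite subset of $\pint$, i.e.\ a bracket pattern. The inclusion $w\subseteq A(w)$ is exactly Lemma~\ref{lemma:necessary_range_elements}. For $A(w)\subseteq w$, take $d=j-i\in A(w)$ with $j\in w$, $i\notin w$, $i<j$; then $i\in M$ and $j\notin M$, and if $d$ were in $M$ the closure of $M$ would give $j=i+d\in M$, a contradiction, so $d\notin M$; as $d\ge 1$ this means $d\in\nnint\setminus M=w$.

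For the concluding equivalence, let $w$ be a bracket pattern. If $w=\nnint\setminus M$ for a submonoid $M$, then $\nnint\setminus M=w$ is finite, so Part~\ref{item:semigroup_property-2} gives $A(w)=w$. Conversely, if $w=A(w)$, then Part~\ref{item:semigroup_property-1} shows $M\eqpd\nnint\setminus w$ is a submonoid of $(\nnint,+)$, and $w=\nnint\setminus M$ by construction. I do not anticipate any real obstacle; the only thing demanding a little care is tracking, at each stage, which of $w$ and its complement plays the role of the submonoid.
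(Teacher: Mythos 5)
Your proof is correct, and for Part~(a) it takes a genuinely different and more elementary route than the paper. The paper proves closure of $\nnint\setminus A(w)$ under addition by invoking the idempotency $A(A(w))=A(w)$ (Lemma~\ref{lemma:range_idempotency}): if $x+y\in A(w)$, $x\notin A(w)$ and $x<x+y$, then $y=(x+y)-x\in A(A(w))=A(w)$, a contradiction. That idempotency lemma itself rests on Lemmas~\ref{lemma:range_construction}, \ref{lemma:characterization_of_generated_bracket_pattern_category} and Proposition~\ref{proposition:range_of_generated_category}, so the paper's argument is short but sits on a tower of prior results. Your argument instead unwinds the definition of $A(w)$ directly: writing $a+b=j-i$ with $j\in w$, $i\notin w$, $i<j$, you insert the intermediate point $k=i+a$ satisfying $i<k<j$ and split on whether $k\in w$, producing a witness for $a\in A(w)$ or $b\in A(w)$ in each case. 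This is self-contained (only uses the definition of $A$ and, implicitly, the bound $\max A(w)=\|w\|$ from Lemma~\ref{lemma:necessary_range_elements} is never even needed because the contrapositive formulation renders the "$x+y$ too large" case vacuous), so it buys independence from the machinery of bracket pattern categories, at the cost of a small case analysis. For Part~(b) and the concluding equivalence your argument coincides with the paper's, with the pleasant addition that you handle the degenerate case $M=\nnint$ and spell out the deduction of the "Especially" clause, which the paper leaves implicit.
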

\begin{proof}
  \begin{enumerate}[label=(\alph*),labelwidth=!,wide]
    \item
  First, note that $0\notin A(w)$ and hence $0\in \nnint\backslash A(w)$. Now, let  $x,y\in \mathbb N_0\backslash A(w)$ be arbitrary. If $\|w\|<x+y$, then $x+y\in \mathbb N_0\backslash A(w)$ holds by Lemma~\ref{lemma:necessary_range_elements}. So, suppose $1\leq x+y\leq \|w\|$ and $1\leq y$. Then, $x<x+y$. Hence, if $x+y\in A(w)$ held, it would follow $y=(x+y)-x\in A(A(w))$ by definition of $A(A(w))$, contradicting $y\notin A(w)$, see Lemma~\ref{lemma:range_idempotency}. In conclusion, $x+y\in \mathbb N_0\backslash A(w)$.
\item
  By Lemma~\ref{lemma:necessary_range_elements}, we only need to  show $A(\mathbb N_0\backslash M)\subseteq \mathbb N_0\backslash M$: Suppose $j\in \mathbb N_0\backslash M$, $i\in \mathbb N_0$, $i\notin \mathbb N_0\backslash M$ and $i<j$.  If $j-i\in M$ were true, then, $M$ being a semigroup, the assumption $i\in M$ would imply $j=(j-i)+i\in M$, contradicting the other premise $j\notin M$. Hence, $j-i\in \mathbb N_0\backslash M$, which is what we needed to show.\qedhere
  \end{enumerate}
\end{proof}
The preceding result easily generalizes to arbitrary bracket pattern categories.
\begin{proposition}
  \label{proposition:semigroup_property_for_categories}
  For every bracket pattern category $\mathfrak W$, the set $\mathbb N_0\backslash \bigcup A(\mathfrak W)$ is a submonoid of $(\mathbb N_0,+)$.
\end{proposition}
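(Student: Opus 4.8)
The plan is to reduce the statement for an arbitrary bracket pattern category $\mathfrak W$ to the single-pattern case already handled in Lemma~\ref{lemma:semigroup_property}~\ref{item:semigroup_property-1}. The key observation is Proposition~\ref{proposition:characterization_of_the_range}: since $\mathfrak W$ is a bracket pattern category, $\bigcup A(\mathfrak W)=\{\|w\|\mid w\in\mathfrak W\}$, and in particular $\bigcup A(\mathfrak W)$ is a union of sets of the form $A(w)$ for $w\in\mathfrak W$. So the task is to show that $\nnint\backslash\bigcup_{w\in\mathfrak W}A(w)$ is a submonoid. First I would note $0\notin\bigcup A(\mathfrak W)$ — this follows from $1=\min A(w)$ in Lemma~\ref{lemma:necessary_range_elements} applied to any $w\in\mathfrak W$ (and if $\mathfrak W=\emptyset$ the complement is all of $\nnint$, trivially a submonoid) — hence $0\in\nnint\backslash\bigcup A(\mathfrak W)$, giving the neutral element.

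For closure under addition, take $x,y\in\nnint\backslash\bigcup A(\mathfrak W)$ and suppose toward a contradiction that $x+y\in\bigcup A(\mathfrak W)$, say $x+y\in A(w)$ for some $w\in\mathfrak W$. We may assume $x,y\geq 1$, the case where one vanishes being immediate. Then $x<x+y\leq\|w\|$. By Lemma~\ref{lemma:range_idempotency}, $A(w)=A(A(w))$, so from $x+y\in A(w)$, $x\notin A(w)$ (because $x\notin\bigcup A(\mathfrak W)\supseteq A(w)$) and $x<x+y$ we can run the same argument as in Lemma~\ref{lemma:semigroup_property}~\ref{item:semigroup_property-1}: the definition of $A(A(w))$ gives $(x+y)-x=y\in A(A(w))=A(w)\subseteq\bigcup A(\mathfrak W)$, contradicting $y\notin\bigcup A(\mathfrak W)$. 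Hence $x+y\in\nnint\backslash\bigcup A(\mathfrak W)$, and $\nnint\backslash\bigcup A(\mathfrak W)$ is a submonoid of $(\nnint,+)$.

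I do not expect any genuine obstacle here; the proposition is essentially a formal corollary of Lemma~\ref{lemma:semigroup_property}~\ref{item:semigroup_property-1} together with Lemma~\ref{lemma:range_idempotency} (idempotency of $A$), the mild point being only that one must pass through the fact that membership in the union $\bigcup A(\mathfrak W)$ is witnessed by a single pattern $w$, so that the idempotency and the ``interval''-type argument can be applied inside one $A(w)$ rather than to the union directly. Alternatively — and perhaps more cleanly — one could use Proposition~\ref{proposition:range_of_generated_category} to write $\bigcup A(\mathfrak W)=\bigcup A(\llangle\mathfrak W\rrangle)$ and then quote Lemma~\ref{lemma:necessary_range_elements} and Lemma~\ref{lemma:range_idempotency} in the form $A(\bigcup A(\mathfrak W))\subseteq\bigcup A(\mathfrak W)$, reducing to the statement for the ``completed'' union; but the direct argument above via Lemma~\ref{lemma:semigroup_property}~\ref{item:semigroup_property-1} is shortest.
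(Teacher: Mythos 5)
Your proof is correct, but it takes a slightly different route than the paper. The paper's proof is a one-liner: it observes that $\nnint\backslash\bigcup A(\mathfrak W)=\bigcap_{w\in\mathfrak W}\bigl(\nnint\backslash A(w)\bigr)$ by De~Morgan, then invokes Lemma~\ref{lemma:semigroup_property}~\ref{item:semigroup_property-1} (each $\nnint\backslash A(w)$ is a submonoid) together with the general fact that arbitrary intersections of submonoids of $(\nnint,+)$ are submonoids. You instead verify the two monoid axioms directly, producing a single witness $w\in\mathfrak W$ for $x+y\in\bigcup A(\mathfrak W)$ and then re-running the idempotency computation that lies inside the proof of Lemma~\ref{lemma:semigroup_property}~\ref{item:semigroup_property-1}. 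Both arguments are sound; the paper's is shorter and treats Lemma~\ref{lemma:semigroup_property}~\ref{item:semigroup_property-1} as a black box, while yours is more self-contained at the cost of repeating that lemma's internal argument. One small remark: your opening appeal to Proposition~\ref{proposition:characterization_of_the_range} is not actually used — the fact that $\bigcup A(\mathfrak W)$ is a union of sets $A(w)$ over $w\in\mathfrak W$ is simply the notation $\bigcup A(\mathfrak W)=\bigcup_{w\in\mathfrak W}A(w)$, not a consequence of that proposition.
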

\begin{proof}
  As $\mathbb N_0\backslash \bigcup A(\mathfrak W)=\bigcap_{w\in \mathfrak W} \left(\mathbb N_0\backslash A(w)\right)$ and as intersection respects the submonoid structure, Lemma~\hyperref[item:semigroup_property-1]{\ref*{lemma:semigroup_property}~\ref*{item:semigroup_property-1}} proves the claim.
\end{proof}
We will now show that the following sets comprise all possible bracket pattern categories and identify their generators.
\begin{definition}
  \label{definition:bracket_pattern_category_of_a_semigroup}
  For every submonoid $M$ of $(\mathbb N_0,+)$, we call
  \begin{align*}
    \mathfrak W_M:= \{ w\mid w \text{ bracket pattern}, A(w)\subseteq \mathbb N_0\backslash M\}
  \end{align*}
  the \emph{bracket pattern category of $M$}.
\end{definition}
For the following lemma, note that we allow set differences $X\backslash Y$ even if $Y\not\subseteq X$.

\begin{lemma}
  \label{proposition:classification_of_bracket_pattern_categories}
  Let $\mathcal M$ denote the set of submonoids of $(\mathbb N_0,+)$.
  \begin{enumerate}[label=(\alph*), labelwidth=!] 
  \item \label{item:classification_of_bracket_pattern_categories-item_1}  For every $M\in \mc M$, the system $\mathfrak W_M$ is a bracket pattern category. 
    \item \label{item:classification_of_bracket_pattern_categories-item_3} 
    It holds $\mathfrak W_\nnint=\emptyset$. If $M\neq \mathbb N_0$, then
    \[\mathfrak W_M=\llangle \mathbb N_0\backslash M\rrangle \quad\text{if }\mathbb N_0\backslash M \text{ is finite,}\]and \[\mathfrak W_M=\llangle \mathbb \{0,\ldots,v\}\backslash M\mid v\in \mathbb N\rrangle \quad\text{if }\mathbb N_0\backslash M \text{ is infinite.}\]
  \item \label{item:classification_of_bracket_pattern_categories-item_4} For every $M\in \mathcal M$ holds \[M=\nnint\backslash \bigcup A(\mathfrak W_M)=\nnint\backslash \bigcup\mathfrak W_M. \]
  \end{enumerate}
\end{lemma}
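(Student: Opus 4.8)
The plan is to deduce all three claims from the pointwise structure theory of the completion $A$ developed above, together with one elementary truncation device. That device is: for $n\in\pint$ with $n\notin M$, the set $M_n\eqpd M\cup\{m\in\pint\mid m>n\}$ is again a submonoid of $(\nnint,+)$ (closure under addition is immediate: a sum stays in $M$ if both summands do, and exceeds $n$ otherwise), and its complement $\nnint\backslash M_n=\{0,\dots,n\}\backslash M$ is a finite, non-empty (it contains $n$) subset of $\pint$ — here we use $0\in M$ — hence a bracket pattern. Applying Lemma~\ref{lemma:semigroup_property}~\ref{item:semigroup_property-2} to $M_n$ then gives the key fact $A(\{0,\dots,n\}\backslash M)=\{0,\dots,n\}\backslash M$, so these truncations are $A$-fixed, and each of them lies in $\mathfrak W_M$ since its $A$-image is contained in $\nnint\backslash M$. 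With this in hand, Part~\ref{item:classification_of_bracket_pattern_categories-item_1} is immediate from Lemma~\ref{lemma:range_extensivity_of_bracket_pattern_operations}: if $A(w),A(w')\subseteq\nnint\backslash M$ then $A(w\cup w')\subseteq A(w)\cup A(w')\subseteq\nnint\backslash M$; if $j\in w$ then $A(\cap_jw)\subseteq A(w)\subseteq\nnint\backslash M$; and $A(w^\dagger)=A(w)\subseteq\nnint\backslash M$. So $\mathfrak W_M$ is closed under superposition, projections and dualisation.

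For Part~\ref{item:classification_of_bracket_pattern_categories-item_3}: if $M=\nnint$ then $\mathfrak W_\nnint=\emptyset$, because $1\in A(w)$ for every bracket pattern $w$ by Lemma~\ref{lemma:necessary_range_elements}, so no $w$ satisfies $A(w)\subseteq\emptyset$. If $M\neq\nnint$ and $\nnint\backslash M$ is finite, then $w_M\eqpd\nnint\backslash M$ is a bracket pattern with $A(w_M)=w_M$ by Lemma~\ref{lemma:semigroup_property}~\ref{item:semigroup_property-2}, whence $\mathfrak W_M=\{w'\mid A(w')\subseteq\nnint\backslash M\}=\{w'\mid A(w')\subseteq A(w_M)\}=\llangle w_M\rrangle$ by Lemma~\ref{lemma:characterization_of_generated_bracket_pattern_category}. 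If $\nnint\backslash M$ is infinite, let $\mathfrak G$ consist of the non-empty patterns $w_v\eqpd\{0,\dots,v\}\backslash M$ (these are exactly those with $v\notin M$); the truncation device yields $\mathfrak G\subseteq\mathfrak W_M$, hence $\llangle\mathfrak G\rrangle\subseteq\mathfrak W_M$ by Part~\ref{item:classification_of_bracket_pattern_categories-item_1}. Conversely, given $w\in\mathfrak W_M$, put $v\eqpd\|w\|$; by Lemma~\ref{lemma:necessary_range_elements}, $w\subseteq A(w)\subseteq\nnint\backslash M$ forces $v\notin M$ (so $w_v\in\mathfrak G$), and $\|A(w)\|=v$ gives $A(w)\subseteq\{1,\dots,v\}\cap(\nnint\backslash M)=w_v=A(w_v)$, so Lemma~\ref{lemma:characterization_of_generated_bracket_pattern_category} yields $w\in\llangle w_v\rrangle\subseteq\llangle\mathfrak G\rrangle$.

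For Part~\ref{item:classification_of_bracket_pattern_categories-item_4} I would first note $\bigcup A(\mathfrak W_M)=\bigcup\mathfrak W_M$: the inclusion ``$\supseteq$'' is $w\subseteq A(w)$, and ``$\subseteq$'' follows since $\mathfrak W_M$ is a bracket pattern category by Part~\ref{item:classification_of_bracket_pattern_categories-item_1}, so by Proposition~\ref{proposition:characterization_of_the_range} one has $\bigcup A(\mathfrak W_M)=\{\|w\|\mid w\in\mathfrak W_M\}\subseteq\bigcup\mathfrak W_M$. Next, $\bigcup A(\mathfrak W_M)=\nnint\backslash M$: ``$\subseteq$'' is the defining property of $\mathfrak W_M$, and for ``$\supseteq$'' any $n\in\nnint\backslash M$ (automatically $n\geq1$, since $0\in M$) satisfies $n\in\{0,\dots,n\}\backslash M=A(\{0,\dots,n\}\backslash M)$ with $\{0,\dots,n\}\backslash M\in\mathfrak W_M$ by the truncation device. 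Hence $\nnint\backslash\bigcup A(\mathfrak W_M)=\nnint\backslash(\nnint\backslash M)=M$, and by the first observation also $\nnint\backslash\bigcup\mathfrak W_M=M$.

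I expect the only genuine obstacle to be the treatment of submonoids with infinite complement: Lemma~\ref{lemma:semigroup_property}~\ref{item:semigroup_property-2} applies only to monoids with finite complement, so one must realise that $\nnint\backslash M$ should be approached through its finite-complement truncations $M_n$, which supplies, essentially for free, a family of $A$-fixed bracket patterns exhausting $\nnint\backslash M$. Everything else is formal manipulation of the inclusions $w\subseteq A(w)$, $\|A(w)\|=\|w\|$ and of the identity $\llangle w\rrangle=\{w'\mid A(w')\subseteq A(w)\}$.
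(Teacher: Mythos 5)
Your proposal is correct, and it uses the same basic toolkit as the paper (Lemma~\ref{lemma:range_extensivity_of_bracket_pattern_operations}, Lemma~\ref{lemma:semigroup_property}~\ref{item:semigroup_property-2}, Lemma~\ref{lemma:characterization_of_generated_bracket_pattern_category}, Proposition~\ref{proposition:characterization_of_the_range}), but the execution of Case~2 of Part~\ref{item:classification_of_bracket_pattern_categories-item_3} is genuinely different and somewhat more economical: instead of the paper's decomposition $\mathfrak W_M=\bigcup_{v\in\pint}\mathfrak W_{M\cup M_v}$ (which requires identifying $\mathfrak W_{M_v}$ and manipulating intersections of bracket pattern categories), you argue locally that each $w\in\mathfrak W_M$ satisfies $A(w)\subseteq\{1,\dots,\|w\|\}\cap(\nnint\backslash M)=A(w_{\|w\|})$ and then invoke Lemma~\ref{lemma:characterization_of_generated_bracket_pattern_category} directly. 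Your treatment of Part~\ref{item:classification_of_bracket_pattern_categories-item_4} is likewise more self-contained: you first establish $\bigcup A(\mathfrak W_M)=\bigcup\mathfrak W_M$ via Proposition~\ref{proposition:characterization_of_the_range} and then compute $\bigcup A(\mathfrak W_M)=\nnint\backslash M$, while the paper derives the $A$-version from Part~\ref{item:classification_of_bracket_pattern_categories-item_3} via Proposition~\ref{proposition:range_of_generated_category} and leaves the second equality to Lemma~\ref{lemma:necessary_range_elements} implicitly. One small imprecision: your parenthetical ``these are exactly those with $v\notin M$'' is not literally true — since $M\neq\nnint$ forces $1\notin M$, every $\{0,\dots,v\}\backslash M$ with $v\geq 1$ is non-empty — but the resulting set of bracket patterns coincides with $\{w_v\mid v\in\pint\}$ because $w_v=w_{v'}$ whenever $v'$ is the largest non-$M$ element $\leq v$, so the generated category is unaffected.
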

\begin{proof}
  \begin{enumerate}[wide,label=(\alph*)]
  \item     Lemma~\ref{lemma:range_extensivity_of_bracket_pattern_operations} shows that $\mathfrak W_M$ is a bracket pattern category for every submonoid $M\in \mathcal M$.
  \item The defining condition $A(w)\subseteq \emptyset$ of $\mathfrak W_{\nnint}$ cannot be satisfied by any bracket pattern $w$. Hence, $\mathfrak W_{\nnint}=\emptyset$. Now, let $M\in \mc M\backslash\{\nnint\}$ be arbitrary.
    \par
    \textbf{Case~1:}
    First, let $\nnint\backslash M$ be finite. In that case, the definition of $\mathfrak W_M$ and Lemma~\hyperref[item:semigroup_property-2] {\ref*{lemma:semigroup_property}~\ref*{item:semigroup_property-2}} imply \[\mathfrak W_M=\{w\mid w\text{ bracket pattern}, A(w)\subseteq A(\mathbb N_0\backslash M) \}.\]
    Lemma~\ref{lemma:characterization_of_generated_bracket_pattern_category} hence allows us to conclude $\mathfrak W_{M}=\llangle \mathbb N_0\backslash M\rrangle$.\par
    \textbf{Case~2:}
If, on the other hand, $\nnint\backslash M$ is infinite, we argue as follows: Let $v\in \mathbb N$ be arbitrary. The sets $M_v:=\{0\}\cup \{i\mid i\in \mathbb N, i>v\}$ and $M\cup M_v$ are submonoids of $(\mathbb N_0,+)$. In addition, $\mathbb N_0\backslash ( M\cup M_v)=\{0,\ldots,v\}\backslash M$ is finite.
We conclude \[\mathfrak W_{M\cup M_v}=\llangle\{0,\ldots,v\}\backslash M\rrangle\]
by Case~1.
Moreover, by definition holds
\begin{align*}
  \mathfrak W_{M\cup M_v}=\mathfrak W_M\cap \mathfrak W_{M_v}.
\end{align*}
This and the fact that
\begin{align*}
  \mathfrak W_{M_v}
  =\{ w\mid w\text{ bracket pattern}, \|w\|\leq v\},
\end{align*}
which holds by definition of $\mathfrak W_{M_v}$ and Lemma~\ref{lemma:necessary_range_elements},
imply
\begin{align*}
  \mathfrak W_M&=\mathfrak W_M\cap\bigcup_{v\in \mathbb N}\mathfrak W_{M_v}
                 = \bigcup_{v\in \mathbb N}\left(\mathfrak W_M\cap\mathfrak W_{M_v}\right)= \bigcup_{v\in \mathbb N}\mathfrak W_{M\cup M_v}= \bigcup_{v\in \mathbb N}\llangle \{0,\ldots,v\}\backslash M\rrangle\\
  &\subseteq \llangle \{0,\ldots,v\}\backslash M\mid v\in \mathbb N\rrangle.
\end{align*}
Conversely, by  Lemma~\hyperref[item:semigroup_property-2] {\ref*{lemma:semigroup_property}~\ref*{item:semigroup_property-2}} and definition of $\mathfrak W_M$, the inclusion
\begin{align*}
  A(\{0,\ldots,v\}\backslash M)=A(\mathbb N_0\backslash (M\cup M_v))=\mathbb N_0\backslash (M\cup M_v)=\{0,\ldots,v\}\backslash M\subseteq \mathbb N_0\backslash M
\end{align*}
proves $\{0,\ldots,v\}\backslash M\in \mathfrak W_M$ for every  $v\in \mathbb N$. We have thus proven \[\mathfrak W_M=\llangle \{0,\ldots,v\}\backslash M\mid v\in \pint\rrangle\] as claimed.
  \item From $\mathfrak W_{\mathbb N_0}=\emptyset$ follows $\nnint=\nnint\backslash \bigcup A(\emptyset)=\nnint\backslash A(\mathfrak W_{\nnint})$. Now, let $M\in \mc M\backslash\{\nnint\}$ be arbitrary.
    Once more, we distinguish two cases.
    \par
    \textbf{Case~1:}
    Suppose $\nnint\backslash M$ is finite.     Then, it follows
  \begin{align*}
    \mathbb N_0\backslash M=A(\mathbb N_0\backslash M)=\bigcup A(\llangle \mathbb N_0\backslash M\rrangle)=\bigcup A(\mathfrak W_M).
  \end{align*}
  by Lemma~\hyperref[item:semigroup_property-2] {\ref*{lemma:semigroup_property}~\ref*{item:semigroup_property-2}}, Proposition~\ref{proposition:range_of_generated_category} and Claim~\ref{item:classification_of_bracket_pattern_categories-item_3}.\par
  \textbf{Case~2:}
  Now, let $\nnint\backslash M$ be infinite. Because $A(\{0,\ldots,v\}\backslash M)=\{0,\ldots,v\}\backslash M$ as seen in the proof of Claim~\ref{item:classification_of_bracket_pattern_categories-item_3}, we infer
  \begin{align*}
    \nnint\backslash M&=\bigcup\{\{0,\ldots,v\}\backslash M\mid v\in \pint\}\\
    &=\bigcup \{ A(\{0,\ldots,v\}\backslash M)\mid v\in \pint\}\\                        
    &=\bigcup A(\{ \{0,\ldots,v\}\backslash M\mid v\in \pint\})\\
    &=\bigcup A(\llangle  \{0,\ldots,v\}\backslash M\mid v\in \pint\rrangle)\\
    &=\bigcup A(\mathfrak W_M)
  \end{align*}
  by Proposition~\ref{proposition:range_of_generated_category} and Claim~\ref{item:classification_of_bracket_pattern_categories-item_3}.    \qedhere
    \end{enumerate}
    \end{proof}
    \begin{proposition}
      \label{proposition:characterization_of_bracket_pattern_categories_final}
      There is a one-to-one correspondence between submonoids of $(\nnint,+)$ and bracket pattern categories, given by the map $M\mapsto \mathfrak W_M$.
    \end{proposition}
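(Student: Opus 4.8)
The plan is to show that the assignment $M \mapsto \mathfrak{W}_M$ is a well-defined bijection from the set $\mathcal{M}$ of submonoids of $(\nnint,+)$ to the set of all bracket pattern categories. Well-definedness (that each $\mathfrak{W}_M$ is indeed a bracket pattern category) is exactly Lemma~\ref{proposition:classification_of_bracket_pattern_categories}~\ref{item:classification_of_bracket_pattern_categories-item_1}, so the content is injectivity and surjectivity.

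For injectivity, I would use the ``recovery'' formula from Lemma~\ref{proposition:classification_of_bracket_pattern_categories}~\ref{item:classification_of_bracket_pattern_categories-item_4}: for every $M\in\mathcal M$ holds $M=\nnint\backslash\bigcup A(\mathfrak W_M)$. Hence if $\mathfrak W_M=\mathfrak W_{M'}$ for submonoids $M,M'$, then $M=\nnint\backslash\bigcup A(\mathfrak W_M)=\nnint\backslash\bigcup A(\mathfrak W_{M'})=M'$. So $M\mapsto\mathfrak W_M$ is injective.

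For surjectivity, let $\mathfrak W$ be an arbitrary bracket pattern category. Set $M\eqpd \nnint\backslash\bigcup A(\mathfrak W)$; by Proposition~\ref{proposition:semigroup_property_for_categories} this is a submonoid of $(\nnint,+)$. It remains to check $\mathfrak W=\mathfrak W_M$. Unravelling the definition, $\mathfrak W_M=\{w\mid w\text{ bracket pattern}, A(w)\subseteq\nnint\backslash M\}=\{w\mid w\text{ bracket pattern}, A(w)\subseteq\bigcup A(\mathfrak W)\}$, and this last set is precisely $\mathfrak W$ by Proposition~\ref{proposition:characterization_of_bracket_pattern_categories}~\ref{item:characterization_of_bracket_pattern_categories-1}. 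Hence every bracket pattern category is of the form $\mathfrak W_M$, establishing surjectivity, and the correspondence is one-to-one.

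The work here is entirely bookkeeping: all three ingredients (Lemma~\ref{proposition:classification_of_bracket_pattern_categories} in its three parts, Proposition~\ref{proposition:semigroup_property_for_categories}, and Proposition~\ref{proposition:characterization_of_bracket_pattern_categories}~\ref{item:characterization_of_bracket_pattern_categories-1}) have already been proven, and the only ``obstacle'' is making sure the two-sided identification of $M$ with the monoid reconstructed from $\mathfrak W_M$ lines up correctly. In fact one could phrase the whole argument as: the map $M\mapsto\mathfrak W_M$ and the map $\mathfrak W\mapsto \nnint\backslash\bigcup A(\mathfrak W)$ are mutually inverse, the first composite being Lemma~\ref{proposition:classification_of_bracket_pattern_categories}~\ref{item:classification_of_bracket_pattern_categories-item_4} and the second composite being Proposition~\ref{proposition:characterization_of_bracket_pattern_categories}~\ref{item:characterization_of_bracket_pattern_categories-1} combined with Proposition~\ref{proposition:semigroup_property_for_categories}.

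\begin{proof}
  By Lemma~\hyperref[item:classification_of_bracket_pattern_categories-item_1]{\ref*{proposition:classification_of_bracket_pattern_categories}~\ref*{item:classification_of_bracket_pattern_categories-item_1}}, the set $\mathfrak W_M$ is a bracket pattern category for every submonoid $M$ of $(\nnint,+)$, so the map $M\mapsto \mathfrak W_M$ is well-defined. It is injective: if $M$ and $M'$ are submonoids of $(\nnint,+)$ with $\mathfrak W_M=\mathfrak W_{M'}$, then Lemma~\hyperref[item:classification_of_bracket_pattern_categories-item_4]{\ref*{proposition:classification_of_bracket_pattern_categories}~\ref*{item:classification_of_bracket_pattern_categories-item_4}} yields
  \[M=\nnint\backslash \bigcup A(\mathfrak W_M)=\nnint\backslash \bigcup A(\mathfrak W_{M'})=M'.\]
  It is surjective: let $\mathfrak W$ be an arbitrary bracket pattern category and put $M\eqpd \nnint\backslash \bigcup A(\mathfrak W)$. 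By Proposition~\ref{proposition:semigroup_property_for_categories}, the set $M$ is a submonoid of $(\nnint,+)$. Moreover, unravelling Definition~\ref{definition:bracket_pattern_category_of_a_semigroup},
  \[\mathfrak W_M=\left\{ w\mid w\text{ bracket pattern}, A(w)\subseteq \nnint\backslash M\right\}=\left\{ w\mid w\text{ bracket pattern}, A(w)\subseteq \bigcup A(\mathfrak W)\right\},\]
  and the right-hand side equals $\mathfrak W$ by Proposition~\hyperref[item:characterization_of_bracket_pattern_categories-1]{\ref*{proposition:characterization_of_bracket_pattern_categories}~\ref*{item:characterization_of_bracket_pattern_categories-1}}. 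Hence $\mathfrak W=\mathfrak W_M$, and the correspondence $M\mapsto \mathfrak W_M$ is a bijection.
\end{proof}
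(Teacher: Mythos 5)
Your proof is correct and follows exactly the same route as the paper: well-definedness from Lemma~\ref{proposition:classification_of_bracket_pattern_categories}~(a), injectivity from the recovery formula in Lemma~\ref{proposition:classification_of_bracket_pattern_categories}~(c), and surjectivity by combining Proposition~\ref{proposition:semigroup_property_for_categories} with Proposition~\ref{proposition:characterization_of_bracket_pattern_categories}~(a) to show $\mathfrak W=\mathfrak W_{\nnint\backslash\bigcup A(\mathfrak W)}$. Nothing to add.
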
 
    \begin{proof}
      By Lemma~\hyperref[item:classification_of_bracket_pattern_categories-item_1]{\ref*{proposition:classification_of_bracket_pattern_categories}~\ref*{item:classification_of_bracket_pattern_categories-item_1}} the map $M\mapsto \mathfrak W_M$ is well-defined.       By Lemma~\hyperref[item:classification_of_bracket_pattern_categories-item_4]{\ref*{proposition:classification_of_bracket_pattern_categories}~\ref*{item:classification_of_bracket_pattern_categories-item_4}} it is injective. Finally, in  combination, Propositions~\hyperref[item:characterization_of_bracket_pattern_categories-1]{\ref*{proposition:characterization_of_bracket_pattern_categories}~\ref*{item:characterization_of_bracket_pattern_categories-1}} and \ref{proposition:semigroup_property_for_categories} prove that every bracket pattern  category $\mathfrak W$ is of the form $\mathfrak W_{M(\mathfrak W)}$ for the submonoid $M(\mathfrak W):=\mathbb N_0\backslash \bigcup A(\mathfrak W)$.
    \end{proof}
\begin{remark}
  Submonoids $M$ of $(\nnint,+)$ with finite $\nnint\backslash M$, also known as \emph{numerical semigroups}, are interesting as such (see \cite{DGR13} for a survey on open problems in this area). Given a finitely generated bracket pattern category $\mathfrak W$, the set $\nnint\backslash\bigcup  A(\mathfrak W)$ is the largest numerical semigroup disjoint from $\bigcup \mathfrak W$. (In fact, one can show $\bigcup  A(\mathfrak W)=\bigcup \mathfrak W$.) Its gap set is $\bigcup A(\mathfrak W)$, its genus $|A(\bigcup \mathfrak W)|$ and its Frobenius number $\|\bigcup \mathfrak W\|$. 
\end{remark}

\section{Generators \texorpdfstring{of $\mc I_D$ }{}and Classification\texorpdfstring{\\{[Main Theorems~\ref*{theorem:main_1}~{\normalfont\ref*{item:main_1-2a}, \ref*{item:main_1-3}}, \ref*{theorem:main_2}]}}{}}
\label{section:generators_and_classification}
As seen in Proposition~\ref{proposition:devolving}, the mapping $\mc C\mapsto \mathfrak B_\mc C$ (see Definition~\ref{definition:bracket_patterns_of_category}) injects the set of all subcategories $\mc C$ of $\mc S_0$ into two copies of the set of all bracket pattern categories (one for the monoid- and one for the non-monoid case). Moreover, by Proposition~\hyperref[item:devolving-item_3]{\ref*{proposition:devolving}~\ref*{item:devolving-item_3}} for every category $\mc C\subseteq \mc S_0$ each generator of the corresponding bracket pattern category $\mathfrak B_\mc C$ induces a generator of $\mc C$.  We combine now with these results from Section~\ref{subsection:residual_brackets:categories_generated_by_residual_brackets} our knowledge about bracket pattern categories and their generators won in Section~\ref{section:bracket_pattern_categories}.  By showing that the injection $\mc C\to \mathfrak B_\mc C$ is also surjective we thus obtain a full classification of all subcategories of $\mathcal S_0$. At the same time, identifying which bracket pattern categories correspond to which subcategories of $\mc S_0$ yields generators of the latter.\par
For every subsemigroup $D$ of $(\mathbb N_0,+)$, by Proposition~\ref{theorem:category_property_of_I_N} the set $\mc I_D$ is a category and thus the bracket patterns $\mathfrak B_{\mc I_D}$ of $\mc I_D$ are a well-defined bracket pattern category (Proposition~\hyperref[item:devolving-item_1]{\ref*{proposition:devolving}~\ref*{item:devolving-item_1}}).  Moreover, $D\cup \{0\}$ is a submonoid of $(\nnint,+)$, implying that also the set $\mathfrak W_{D\cup \{0\}}$  (Definition~\ref{definition:bracket_pattern_category_of_a_semigroup}) is a bracket pattern category (see Lemma~\hyperref[item:classification_of_bracket_pattern_categories-item_1]{\ref*{proposition:classification_of_bracket_pattern_categories}~\ref*{item:classification_of_bracket_pattern_categories-item_1}}).
\begin{lemma}
  \label{lemma:linking}
  Let $D$ be a subsemigroup of  $(\mathbb N_0,+)$.
  \begin{enumerate}[label=(\alph*)]
  \item\label{item:linking-item_1} The category $\mc I_D$ is in the monoid case if and only if $0\in D$.
  \item\label{item:linking-item_2} It holds $\mathfrak B_{\mc I_D}=\mathfrak W_{D\cup \{0\}}$. 
  \end{enumerate}
\end{lemma}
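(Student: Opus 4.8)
The plan is to reduce both claims to the compact reformulation $\mc I_D=\{p\in\mc S_0\mid A(p)\subseteq\nnint\backslash D\}$ from Remark~\ref{remark:I_N-with-A}, together with the identity $A(\mr{Br}_\bullet(w))=A(w)$ from Lemma~\ref{lemma:minimal_bracket_A} and the normalization $\min A(w)=1$ from Lemma~\ref{lemma:necessary_range_elements}.

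For Part~\ref{item:linking-item_1} I would first determine $A(\PartHalfLibWBW)$. Since $\PartHalfLibWBW$ contains a crossing, $A(\PartHalfLibWBW)\neq\emptyset$; and since $\PartHalfLibWBW\in\mc I_\pint$ (recorded in the discussion accompanying Definition~\ref{definition:categories_I_N}), Remark~\ref{remark:I_N-with-A} forces $A(\PartHalfLibWBW)\subseteq\nnint\backslash\pint=\{0\}$. Hence $A(\PartHalfLibWBW)=\{0\}$; alternatively, one checks directly that the three blocks of $\PartHalfLibWBW$ cross pairwise and all have color distance $0$. Consequently, again by Remark~\ref{remark:I_N-with-A}, $\PartHalfLibWBW\in\mc I_D$ holds if and only if $\{0\}\subseteq\nnint\backslash D$, i.e.\ if and only if $0\notin D$. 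By Definition~\ref{definition:monoid_case_non_monoid_case} this is exactly the assertion that $\mc I_D$ is in the monoid case if and only if $0\in D$.

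For Part~\ref{item:linking-item_2} I would unwind the definition $\mathfrak B_{\mc I_D}=\{w\mid w\text{ bracket pattern},\ \mr{Br}_\bullet(w)\in\mc I_D\}$ from Definition~\ref{definition:bracket_patterns_of_category}. Minimal brackets are residual of the second kind, hence lie in $\mc S_0$ by Lemma~\ref{lemma:residual_brackets_in_S_zero}, so Remark~\ref{remark:I_N-with-A} applies and, combined with Lemma~\ref{lemma:minimal_bracket_A}, yields $\mathfrak B_{\mc I_D}=\{w\mid A(w)\subseteq\nnint\backslash D\}$. On the other side, $\mathfrak W_{D\cup\{0\}}=\{w\mid A(w)\subseteq\nnint\backslash(D\cup\{0\})\}$ by Definition~\ref{definition:bracket_pattern_category_of_a_semigroup}. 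The inclusion $\mathfrak W_{D\cup\{0\}}\subseteq\mathfrak B_{\mc I_D}$ is immediate since $\nnint\backslash(D\cup\{0\})\subseteq\nnint\backslash D$; for the reverse inclusion I would invoke $0\notin A(w)$ for every bracket pattern $w$ (Lemma~\ref{lemma:necessary_range_elements}), so that $A(w)\subseteq\nnint\backslash D$ automatically upgrades to $A(w)\subseteq\nnint\backslash(D\cup\{0\})$, giving $\mathfrak B_{\mc I_D}=\mathfrak W_{D\cup\{0\}}$.

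This argument has no real obstacle; the only place calling for a moment's attention is the identification $A(\PartHalfLibWBW)=\{0\}$, which however is precisely the observation already displayed in the text accompanying Definition~\ref{definition:categories_I_N}.
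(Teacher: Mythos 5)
Your proposal is correct and takes essentially the same route as the paper: both parts are reduced to the compact description $\mc I_D=\{p\in\mc S_0\mid A(p)\subseteq\nnint\backslash D\}$ from Remark~\ref{remark:I_N-with-A}, combined with $A(\mr{Br}_\bullet(w))=A(w)$ from Lemma~\ref{lemma:minimal_bracket_A} and $0\notin A(w)$ from Lemma~\ref{lemma:necessary_range_elements}. Your derivation of $A(\PartHalfLibWBW)=\{0\}$ is a touch more roundabout than the paper's (which simply cites the example after Definition~\ref{definition:categories_I_N}), but it rests on the same observation and changes nothing substantive.
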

\begin{proof}
  \begin{enumerate}[label=(\alph*),wide]
  \item We  prove the contraposition of the claim. It holds $A(\PartHalfLibWBW)=\{0\}$, see the example after Definition~\ref{definition:categories_I_N}. By Definition~\ref{definition:monoid_case_non_monoid_case}, the category $\mc I_D$ is in the non-monoid case if and only if $\PartHalfLibWBW\in \mc I_D$. In turn, by Remark~\ref{remark:I_N-with-A} of $\mc I_D$ that is true if and only if $\{0\}=A(\PartHalfLibWBW)\subseteq \nnint\backslash D$. But the latter statement is equivalent to $0\notin D$. 
  \item For every bracket pattern $w$, Remark~\ref{remark:I_N-with-A} and Lemma~\ref{lemma:minimal_bracket_A} show the equivalence that $\mr{Br}_\bullet(w)\in \mathcal I_D$ holds if and only if $A(w)\subseteq \nnint\backslash D$. The latter statement can also be expressed as $A(w)\subseteq \nnint\backslash(D\cup \{0\})$ since $0\notin A(w)$. In other words, we have shown $\mathfrak B_{\mathcal I_D}=\mathfrak W_{D\cup\{0\}}$.\qedhere
\end{enumerate}
\end{proof}
\begin{theorem}
    Let $\mathcal D$ denote the set of subsemigroups of $(\mathbb N_0,+)$.
  \label{theorem:main_theorem}
  \begin{enumerate}[label=(\alph*),labelwidth=!]
  \item \label{item:main_theorem-1} Let $D\in \mathcal D$ be arbitrary.
  \begin{enumerate}
  \item If $0\in D$ and $|\mathbb N\backslash D|<\infty$, then  \(\mathcal I_D=\left\langle \mathrm{Br}_\bullet(\mathbb N \backslash D)\right \rangle.\)
  \item If $0\notin D$  and $|\mathbb N\backslash D|<\infty$, then  \(\mathcal I_D=\left\langle \mathrm{Br}_\bullet(\mathbb N \backslash D),\PartHalfLibWBW\right\rangle.\)     
  \item If $0\in D$ and $|\mathbb N\backslash D|=\infty$, then  \(\mathcal I_D=\left\langle \mathrm{Br}_\bullet(\{1,\ldots,v\}\backslash D)\mid v\in \mathbb N\right\rangle.\)
  \item If $0\notin D$ and $|\mathbb N\backslash D|=\infty$, then  \(\mathcal I_D=\left\langle \mathrm{Br}_\bullet(\{1,\ldots,v\}\backslash D), \PartHalfLibWBW \mid v\in \mathbb N\right\rangle.\)    
  \end{enumerate}
\item \label{item:main_theorem-3}
  The categories $(\mc I_D)_{D\in \mc D}$ are pairwise distinct.
    \end{enumerate}
\end{theorem}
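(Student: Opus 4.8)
The plan is to handle the two parts separately, reducing each to facts already established for bracket pattern categories together with the linking Lemma~\ref{lemma:linking}; essentially no further work with partitions is required.

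For Part~\ref{item:main_theorem-1} I would first note that $D\cup\{0\}$ is a submonoid of $(\nnint,+)$ with $\nnint\backslash(D\cup\{0\})=\pint\backslash D$, and then read off from Lemma~\hyperref[item:classification_of_bracket_pattern_categories-item_3]{\ref*{proposition:classification_of_bracket_pattern_categories}~\ref*{item:classification_of_bracket_pattern_categories-item_3}} that the bracket pattern category $\mathfrak W_{D\cup\{0\}}$ is generated by $\pint\backslash D$ if $\pint\backslash D$ is finite and by $\{\{1,\ldots,v\}\backslash D\mid v\in\pint\}$ otherwise, using the identity $\{0,\ldots,v\}\backslash(D\cup\{0\})=\{1,\ldots,v\}\backslash D$. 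Since $\mathfrak B_{\mc I_D}=\mathfrak W_{D\cup\{0\}}$ by Lemma~\hyperref[item:linking-item_2]{\ref*{lemma:linking}~\ref*{item:linking-item_2}}, these are generating sets $\mathfrak G_{\mc I_D}$ in the sense of Proposition~\hyperref[item:devolving-item_3]{\ref*{proposition:devolving}~\ref*{item:devolving-item_3}}; applying that proposition, and using Lemma~\hyperref[item:linking-item_1]{\ref*{lemma:linking}~\ref*{item:linking-item_1}} to decide that the monoid case occurs exactly when $0\in D$ — so that $\PartHalfLibWBW$ must be adjoined precisely when $0\notin D$ — yields the four displayed cases at once. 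The only boundary value to comment on is $D=\nnint$, where $\pint\backslash D=\emptyset$, $\mathfrak B_{\mc I_{\nnint}}=\mathfrak W_{\nnint}=\emptyset$, and both sides of case~(i) reduce to $\langle\emptyset\rangle$, consistent with Proposition~\hyperref[item:set_relations-item_1]{\ref*{proposition:set_relations}~\ref*{item:set_relations-item_1}} (reading $\mr{Br}_\bullet(\emptyset)$ as the empty generating set); for case~(iv) the value $D=\emptyset$ gives back $\mc I_\emptyset=\mc S_0$.

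For Part~\ref{item:main_theorem-3} the idea is to show $D$ is recoverable from $\mc I_D$. Assuming $\mc I_D=\mc I_{D'}$ for $D,D'\in\mc D$, I would first observe $\mathfrak B_{\mc I_D}=\mathfrak B_{\mc I_{D'}}$, hence $\mathfrak W_{D\cup\{0\}}=\mathfrak W_{D'\cup\{0\}}$ by Lemma~\hyperref[item:linking-item_2]{\ref*{lemma:linking}~\ref*{item:linking-item_2}}, and then invoke the injectivity of $M\mapsto\mathfrak W_M$ from Proposition~\ref{proposition:characterization_of_bracket_pattern_categories_final} to conclude $D\cup\{0\}=D'\cup\{0\}$. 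Since $\mc I_D$ and $\mc I_{D'}$ are literally the same category, they are in the same case, so Lemma~\hyperref[item:linking-item_1]{\ref*{lemma:linking}~\ref*{item:linking-item_1}} gives $0\in D\iff 0\in D'$; combining this with $D\cup\{0\}=D'\cup\{0\}$ forces $D=D'$ (keep $0$ if it lies in both, delete it if in neither).

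Since the classification of bracket pattern categories and Lemma~\ref{lemma:linking} are already available, no step here is a real obstacle; the only thing to keep straight is the shift by $\{0\}$. Crossing blocks of color distance $0$ genuinely occur — as in $\PartHalfLibWBW$ — so $A(p)$ may contain $0$, whereas completions of bracket patterns never do, which is why bracket patterns record $D\cup\{0\}$ rather than $D$; Lemma~\hyperref[item:linking-item_1]{\ref*{lemma:linking}~\ref*{item:linking-item_1}} supplies exactly the missing bit $0\in D$ or $0\notin D$. Consistently carrying this adjustment through both parts, and disposing of the boundary values $D=\nnint$ and $D=\emptyset$, is all that remains.
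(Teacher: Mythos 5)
Your proof is correct and follows essentially the same route as the paper: for part (a) you combine Lemma~\hyperref[item:linking-item_2]{\ref*{lemma:linking}~\ref*{item:linking-item_2}}, Lemma~\hyperref[item:classification_of_bracket_pattern_categories-item_3]{\ref*{proposition:classification_of_bracket_pattern_categories}~\ref*{item:classification_of_bracket_pattern_categories-item_3}} and Proposition~\hyperref[item:devolving-item_3]{\ref*{proposition:devolving}~\ref*{item:devolving-item_3}} with Lemma~\hyperref[item:linking-item_1]{\ref*{lemma:linking}~\ref*{item:linking-item_1}} deciding when to adjoin $\PartHalfLibWBW$, and for part (b) you recover $D\cup\{0\}$ via the injectivity in Proposition~\ref{proposition:characterization_of_bracket_pattern_categories_final} and the parity bit $0\in D$ from Lemma~\hyperref[item:linking-item_1]{\ref*{lemma:linking}~\ref*{item:linking-item_1}} — exactly the paper's argument. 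Your attention to the degenerate case $D=\nnint$ (where $\pint\backslash D=\emptyset$ is not a bracket pattern and the statement must be read as $\langle\emptyset\rangle$) is a useful clarification the paper leaves implicit.
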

\begin{proof}
  \begin{enumerate}[label=(\alph*),labelwidth=!,wide]
  \item
    Minding $\pint\backslash D=\nnint\backslash(D\cup \{0\})$,  Pro\-po\-si\-tions~\hyperref[item:devolving-item_3]{\ref*{proposition:devolving}~\ref*{item:devolving-item_3}} and ~\hyperref[item:classification_of_bracket_pattern_categories-item_3]{\ref*{proposition:classification_of_bracket_pattern_categories}~\ref*{item:classification_of_bracket_pattern_categories-item_3}} and Lemma~\ref{lemma:linking} prove Claim~\ref{item:main_theorem-1}.
  \item Suppose $D_1,D_2\in \mc D$ and $\mc I_{D_1}=\mc I_{D_2}$.    Lemma~\hyperref[item:linking-item_2]{\ref*{lemma:linking}~\ref*{item:linking-item_2}} showed $\mathfrak B_{\mc I_{D_i}}=\mathfrak W_{D_i\cup \{0\}}$ for all $i\in \{1,2\}$.
    So, from $\mc I_{D_1}=\mc I_{D_2}$ follows $\mathfrak W_{D_1\cup \{0\}}=\mathfrak W_{D_2\cup \{0\}}$. We conclude $D_1\cup \{0\}=D_2\cup \{0\}$ by Proposition~\ref{proposition:characterization_of_bracket_pattern_categories_final}.
  Once more, as seen in Lemma~\hyperref[item:linking-item_1]{\ref*{lemma:linking}~\ref*{item:linking-item_1}}, for all $i\in \{1,2\}$, the category $\mc I_{D_i}$ is in the monoid case if and only if $0\in D_i$.  In other words, $0\in D_1$ if and only if $0\in D_2$. Hence, $\mc I_{D_1}=\mc I_{D_2}$. \qedhere
\end{enumerate}
\end{proof}

\begin{theorem}
      Let $\mathcal D$ denote the set of subsemigroups of $(\mathbb N_0,+)$.
  For every category of partitions $\mathcal C\subseteq \mathcal S_0$ exists $D\in \mathcal D$ such that $\mathcal C=\mathcal I_D$.
  \end{theorem}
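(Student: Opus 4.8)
The plan is to invoke the machinery assembled in the previous two sections and reduce the statement to the classification of bracket pattern categories already established in Proposition~\ref{proposition:characterization_of_bracket_pattern_categories_final}. Let $\mathcal C\subseteq \mathcal S_0$ be an arbitrary category of partitions. First I would pass to its bracket patterns $\mathfrak B_\mathcal C$ (Definition~\ref{definition:bracket_patterns_of_category}), which by Proposition~\hyperref[item:devolving-item_1]{\ref*{proposition:devolving}~\ref*{item:devolving-item_1}} form a bracket pattern category. By Proposition~\ref{proposition:characterization_of_bracket_pattern_categories_final}, there is a unique submonoid $M$ of $(\nnint,+)$ with $\mathfrak B_\mathcal C=\mathfrak W_M$; explicitly $M=\nnint\backslash \bigcup A(\mathfrak B_\mathcal C)$.

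Next I would produce the candidate subsemigroup $D\in\mathcal D$. Set $D\eqpd M$ if $\mathcal C$ is in the non-monoid case (i.e.\ $\PartHalfLibWBW\in\mathcal C$), and $D\eqpd M\backslash\{0\}$ if $\mathcal C$ is in the monoid case. In either case $D$ is a subsemigroup of $(\nnint,+)$ and $D\cup\{0\}=M$, so by Lemma~\hyperref[item:linking-item_2]{\ref*{lemma:linking}~\ref*{item:linking-item_2}} we get $\mathfrak B_{\mathcal I_D}=\mathfrak W_{D\cup\{0\}}=\mathfrak W_M=\mathfrak B_\mathcal C$. Moreover, by Lemma~\hyperref[item:linking-item_1]{\ref*{lemma:linking}~\ref*{item:linking-item_1}}, $\mathcal I_D$ is in the monoid case if and only if $0\in D$; by the choice of $D$ this matches whether $\mathcal C$ is in the monoid case. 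Thus $\mathcal C$ and $\mathcal I_D$ agree both on their bracket patterns and on the monoid/non-monoid dichotomy.

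Finally I would conclude $\mathcal C=\mathcal I_D$ using Proposition~\ref{corollary:generation_by_minimal_brackets}. In the monoid case, $\mathcal C=\langle \mathcal C\cap\minbr\rangle=\langle\{\mr{Br}_\bullet(w)\mid w\in\mathfrak B_\mathcal C\}\rangle=\langle\{\mr{Br}_\bullet(w)\mid w\in\mathfrak B_{\mathcal I_D}\}\rangle=\langle \mathcal I_D\cap\minbr\rangle=\mathcal I_D$, where the identifications of $\mathcal C\cap\minbr$ with the set of $\mr{Br}_\bullet(w)$, $w\in\mathfrak B_\mathcal C$, come from Lemma~\ref{lemma:characterization_of_minimal_brackets}. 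In the non-monoid case one runs the same chain of equalities with the extra generator $\PartHalfLibWBW$ adjoined throughout, again via Proposition~\ref{corollary:generation_by_minimal_brackets}. Since $\mathcal I_D$ really is a category by Proposition~\ref{theorem:category_property_of_I_N}, this completes the argument.

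The only slightly delicate point — and the step I expect to require the most care — is the bookkeeping of the monoid versus non-monoid case and the corresponding role of the element $0$: one must check that $D\cup\{0\}$ is always the monoid $M$ furnished by the classification of bracket pattern categories, and that replacing $M$ by $M\backslash\{0\}$ in the monoid case does not change $\mathfrak W_{D\cup\{0\}}$ (it does not, precisely because $0\notin A(w)$ for every bracket pattern $w$, which is exactly why Lemma~\hyperref[item:linking-item_2]{\ref*{lemma:linking}~\ref*{item:linking-item_2}} holds). Everything else is a direct chain of already-proven equalities.
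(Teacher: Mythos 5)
Your overall strategy is exactly the one the paper takes: reduce to bracket patterns via Proposition~\hyperref[item:devolving-item_1]{\ref*{proposition:devolving}~\ref*{item:devolving-item_1}}, invoke the classification of bracket pattern categories (Proposition~\ref{proposition:characterization_of_bracket_pattern_categories_final}) to get a submonoid $M$, define a candidate $D$ by possibly removing $0$ from $M$, and conclude by matching both $\mathfrak B_{\mc C}=\mathfrak B_{\mc I_D}$ and the monoid/non-monoid dichotomy via Lemma~\ref{lemma:linking} and the generation result (you use Proposition~\ref{corollary:generation_by_minimal_brackets}, the paper uses Proposition~\hyperref[item:devolving-item_3]{\ref*{proposition:devolving}~\ref*{item:devolving-item_3}}; these are interchangeable here).

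However, your choice of $D$ has the two cases swapped, and this is a genuine error, not a typo in exposition. Recall that \enquote{$\mc C$ in the monoid case} means $\PartHalfLibWBW\notin \mc C$ (Definition~\ref{definition:monoid_case_non_monoid_case}), and Lemma~\hyperref[item:linking-item_1]{\ref*{lemma:linking}~\ref*{item:linking-item_1}} says $\mc I_D$ is in the monoid case if and only if $0\in D$. To make the cases of $\mc C$ and $\mc I_D$ agree you therefore need $0\in D$ when $\mc C$ is in the monoid case (so $D=M$) and $0\notin D$ when $\mc C$ is in the non-monoid case (so $D=M\backslash\{0\}$). You set $D=M$ in the non-monoid case and $D=M\backslash\{0\}$ in the monoid case, which is exactly backwards. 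With your choice, $\PartHalfLibWBW$ lies in precisely one of $\mc C$ and $\mc I_D$, so they cannot be equal; concretely, the final step of your chain in the monoid case, $\langle \mc I_D\cap \minbr\rangle=\mc I_D$, fails because with $0\notin D$ the category $\mc I_D$ is in the non-monoid case and Proposition~\ref{corollary:generation_by_minimal_brackets} only gives $\mc I_D=\langle \mc I_D\cap\minbr,\PartHalfLibWBW\rangle$, which is strictly larger. The sentence \enquote{by the choice of $D$ this matches whether $\mc C$ is in the monoid case} is where the mistake is papered over; it does not match. Swapping the two assignments fixes the argument and recovers the paper's proof.
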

  \begin{proof}
    Let $\mc C\subseteq \mc S_0$ be an arbitrary category. The bracket patterns $\mathfrak B_\mc C$ of $\mc C$ in the sense of Definition~\ref{definition:bracket_patterns_of_category} are a bracket pattern category by Proposition~\hyperref[item:devolving-item_1]{\ref*{proposition:devolving}~\ref*{item:devolving-item_1}}. Employing the classification result of  Proposition~\ref{proposition:characterization_of_bracket_pattern_categories_final}, we conclude that there exists a submonoid $M$ of $(\nnint,+)$, i.e.\ $M\in \mc D$ with $0\in M$, such that $\mathfrak B_\mc C=\mathfrak W_{M}$, in the notation of Definition~\ref{definition:bracket_pattern_category_of_a_semigroup}. Put $E\eqpd \emptyset$ if $\mc C$ is in the monoid case and define $E\eqpd \{0\}$ otherwise. Then, $M\backslash E\in\mc D$. We show $\mc C=\mc I_{M\backslash E}$.
    \par
    By definition of $E$ and by Lemma~\hyperref[item:linking-item_1]{\ref*{lemma:linking}~\ref*{item:linking-item_1}} the category $\mc I_{M\backslash E}$ is in the monoid case if and only if $\mc C$ is. Moreover,
   Lemma~\hyperref[item:linking-item_2]{\ref*{lemma:linking}~\ref*{item:linking-item_2}} shows $\mathfrak B_{\mc I_{M\backslash E}}=\mathfrak W_M=\mathfrak B_\mc C$. Hence,  Proposition~\hyperref[item:devolving-item_3]{\ref*{proposition:devolving}~\ref*{item:devolving-item_3}} proves $\mc C=\mc I_{M\backslash E}$. That concludes the proof.\qedhere
\end{proof}
Subsemigroups $D$ of $(\nnint,+)$ with $|\pint\backslash D|=\infty$ are precisely the sets $\emptyset$, $\{0\}$ as well as $n\nnint$ and $n\pint$ for $n\in \pint$ with $n\geq 2$ (see \cite[Chpt.~1]{RG09}). In particular, we can confirm the existence of non-finitely-generated categories.
\begin{corollary}
  Let $D$ be a subsemigroup of $(\nnint,+)$ with $|\pint\backslash D|=\infty$. Then, for all sets $\mc G\subseteq \Cp$ with $\langle \mc G\rangle=\mc I_D$ holds $|\mc G|=\infty$.
\end{corollary}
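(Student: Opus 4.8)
The plan is to derive a contradiction from assuming that some finite set $\mc G\subseteq \Cp$ satisfies $\langle\mc G\rangle=\mc I_D$, using the invariant $\bigcup A(\cdot)$ from Definition~\ref{definition:A} together with its behaviour under generation recorded in Remark~\ref{lemma:invariance}. First I would note that $\mc G\subseteq \langle\mc G\rangle=\mc I_D\subseteq \mc S_0$, so Remark~\ref{lemma:invariance} applies and gives $\bigcup A(\mc G)=\bigcup A(\langle\mc G\rangle)=\bigcup A(\mc I_D)$. Since each partition has only finitely many blocks, $A(p)$ is a finite subset of $\nnint$ for every $p$; hence a finite $\mc G$ would force $\bigcup A(\mc G)$, being a finite union of finite sets, to be finite as well. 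The whole argument therefore reduces to showing that $\bigcup A(\mc I_D)$ is infinite whenever $|\pint\backslash D|=\infty$.

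For that I would invoke the bridge to bracket pattern categories. By Lemma~\ref{lemma:linking}~\ref{item:linking-item_2} the bracket patterns of $\mc I_D$ are $\mathfrak B_{\mc I_D}=\mathfrak W_{D\cup\{0\}}$, and by Lemma~\ref{proposition:classification_of_bracket_pattern_categories}~\ref{item:classification_of_bracket_pattern_categories-item_4} one has $\bigcup A(\mathfrak W_{D\cup\{0\}})=\nnint\backslash(D\cup\{0\})=\pint\backslash D$. Every $w\in\mathfrak B_{\mc I_D}$ yields a partition $\mr{Br}_\bullet(w)\in \mc I_D$ by definition of $\mathfrak B_{\mc I_D}$, and $A(\mr{Br}_\bullet(w))=A(w)$ by Lemma~\ref{lemma:minimal_bracket_A}; consequently $\pint\backslash D=\bigcup A(\mathfrak B_{\mc I_D})\subseteq \bigcup A(\mc I_D)$. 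As $\pint\backslash D$ is infinite by hypothesis, so is $\bigcup A(\mc I_D)$, contradicting the conclusion of the first paragraph. Hence $\mc G$ cannot be finite, i.e.\ $|\mc G|=\infty$.

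I do not expect a genuine obstacle here: once the classification machinery of the preceding sections — in particular the identity $\bigcup A(\mathfrak W_{D\cup\{0\}})=\pint\backslash D$ and the equivalence $\mathfrak B_{\mc I_D}=\mathfrak W_{D\cup\{0\}}$ — is in place, this is pure bookkeeping with the map $A$. The only point that needs a moment's attention is making sure that $\mc G$ actually lies in $\mc S_0$, so that Remark~\ref{lemma:invariance} is applicable; but this is immediate from $\mc G\subseteq\langle\mc G\rangle=\mc I_D\subseteq\mc S_0$.
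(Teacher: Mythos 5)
Your proof is correct and follows essentially the same route as the paper: reduce to showing $\bigcup A(\mc I_D)$ is infinite via the bracket-pattern correspondence $\mathfrak B_{\mc I_D}=\mathfrak W_{D\cup\{0\}}$ and $\bigcup A(\mathfrak W_{D\cup\{0\}})=\pint\backslash D$, then pass back through Remark~\ref{lemma:invariance}. The only (welcome) differences are cosmetic: you explicitly verify $\mc G\subseteq\mc I_D\subseteq\mc S_0$ before invoking Remark~\ref{lemma:invariance} and explicitly note that each $A(p)$ is finite — details the paper leaves implicit.
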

\begin{proof}
  As seen in Lemma~\hyperref[item:linking-item_2]{\ref*{lemma:linking}~\ref*{item:linking-item_2}},  $\mathfrak{B}_{\mc I_D}=\mathfrak{W}_{D\cup \{0\}}$. That implies $\bigcup A(\{\mr{Br}_\bullet(w)\mid w\in \mathfrak{B}_{\mc I_D}  \})=    \bigcup A(\{\mr{Br}_\bullet(w)\mid w\in \mathfrak{W}_{D\cup \{0\}}  \})$. With the help of Lemma~\ref{lemma:minimal_bracket_A} we conclude $\bigcup A(\{\mr{Br}_\bullet(w)\mid w\in \mathfrak{B}_{\mc I_D}  \})=\bigcup A(\{w \mid w\in \mathfrak W_{D\cup \{0\}}\})=\bigcup A(\mathfrak W_{D\cup \{0\}})$. Now,  Lemma~\ref{proposition:classification_of_bracket_pattern_categories} assures us that $D\cup \{0\}=\nnint\backslash \bigcup A(\mathfrak W_{D\cup \{0\}})$, from which it follows that $\bigcup A(\{\mr{Br}_\bullet(w)\mid w\in \mathfrak{B}_{\mc I_D}  \})=\nnint\backslash (D\cup \{0\})=\pint\backslash D$. We have thus proven $\pint\backslash D\subseteq \bigcup A(\mc I_D)$ by Proposition~\hyperref[item:devolving-item_3]{\ref*{proposition:devolving}~\ref*{item:devolving-item_3}}. So, especially $\bigcup A(\mc I_D)$ is infinite. Let $\mc G\subseteq \Cp$ satisfy $\langle \mc G\rangle =\mc I_D$. By Remark~\ref{lemma:invariance} we infer $\pint\backslash D\subseteq \bigcup A(\mc I_D)= \bigcup A(\langle \mc G \rangle)=   \bigcup A(\mc G)$, proving that $\bigcup A(\mc G)$ is infinite. That is only possible if $\mc G$ is infinite.
\end{proof}
\section{Concluding Remarks}
\label{section:concluding_remarks}

\subsection{Half-Liberations of \texorpdfstring{$U_n$}{the Unitary Group}} Banica and Speicher showed that categories of one-colored partitions correspond to certain quantum subgroups of Wang's (\cite{Wa95}) free orthogonal quantum group $O_n^+$, namely the \emph{orthogonal} easy quantum groups (see \cite{BaSp09}, \cite{We16} and \cite{We17a}). Categories of two-colored partitions are in bijection with so-called \emph{unitary} easy quantum groups (cf.~\cite{TaWe15a}), certain compact quantum subgroups of Wang's (\cite{Wa95}) free \emph{unitary} quantum group $U_n^+$.
\par
Since $O_n^+$ is a quantum version, a kind of \enquote{liberation}, of the orthogonal group $O_n$, a natural question is to find all (orthogonal) easy quantum groups between $O_n$ and $O_n^+$, i.e.\ all \enquote{half-liberations} of $O_n$. Only one (orthogonal) easy quantum group exists here, namely the \emph{half-liberated} orthogonal quantum group $O_n^*$ (\cite{BaSp09}). Here the commutation relations $ab=ba$ holding in $O_n$ are not yet dropped entirely, as is the case eventually in $O_n^+$, but are relaxed to the half-commutation relations $acb=bca$. Equivalently, the category of (one-colored) partitions corresponding to $O_n^*$ is generated by
\begin{align*}
  \begin{tikzpicture}[scale=0.666]
    \node [scale=0.4, draw=black, fill=gray] (x1) at (0,0) {};
    \node [scale=0.4, draw=black, fill=gray] (y2) at (2,0) {};
    \node [scale=0.4, draw=black, fill=gray] (y1) at (0,1.5) {};
    \node [scale=0.4, draw=black, fill=gray] (x2) at (2,1.5) {};
    \node [scale=0.4, draw=black, fill=gray] (z1) at (1,0) {};
    \node [scale=0.4, draw=black, fill=gray] (z2) at (1,1.5) {};
    \draw (x1) to (x2);
    \draw (y1) to (y2);
    \draw (z1) to (z2);    
  \end{tikzpicture}.
\end{align*}
Research into generalizing this half-liberation procedure to the unitary case (see \cite{BhDADa11}, \cite{BiDu13}, \cite{BhDADaDa14},  \cite{BaBi17b} and \cite{BaBi17}), where the generators are no longer self-adjoint and thus adjoints, i.e.\ colors, come in, effectively went in the direction of coloring the points of this partition in different ways or, eventually, those of the similar-looking partition
\begin{align*}
  \begin{tikzpicture}[scale=0.666, baseline=-0.111cm]
    \node [scale=0.4, draw=black, fill=gray] (x1) at (1,0) {};
    \node [scale=0.4, draw=black, fill=gray] (y2) at (7,0) {};
    \node [scale=0.4, draw=black, fill=gray] (y1) at (1,2) {};
    \node [scale=0.4, draw=black, fill=gray] (x2) at (7,2) {};
    \node [scale=0.4, draw=black, fill=gray] (z1) at (6,0) {};
    \node [scale=0.4, draw=black, fill=gray] (z2) at (6,2) {};
    \node [scale=0.4, draw=black, fill=gray] (w1) at (2,0) {};
    \node [scale=0.4, draw=black, fill=gray] (w2) at (2,2) {};
    \node [scale=0.4, draw=black, fill=gray] (a1) at (5,0) {};
    \node [scale=0.4, draw=black, fill=gray] (a2) at (5,2) {};
    \node [scale=0.4, draw=black, fill=gray] (b1) at (3,0) {};
    \node [scale=0.4, draw=black, fill=gray] (b2) at (3,2) {};    
    \draw (x1) to (x2);
    \draw (y1) to (y2);
    \draw (z1) to (z2);
    \draw (w1) to (w2);
    \draw (a1) to (a2);
    \draw (b1) to (b2);    
    \draw [densely dotted] (0.8,-0.5) --++(0,-0.25) --++ (6.4,0) -- ++ (0,0.25);
    \node at (4,-1.2) {$k$ times};
    \node at (4,0) {\ldots};
    \node at (4,2) {\ldots};    
  \end{tikzpicture}.
\end{align*}
Essentially, these partitions represent relations $ac_1\ldots c_kb=bc_1\ldots c_ka$  provided certain conditions on the factors $c_1\ldots c_k$  are satisfied.
\par
Note that finding all unitary easy quantum groups between $U_n$ and $U_n^+$, i.e.\ all unitary half-liberations, amounts to classifying all categories $\mc C$ with $\langle \emptyset\rangle\subseteq \mc C\subseteq \langle \PartCrossWW\rangle$, as is the contents of our classification result in Section~\ref{section:main_result}.
\subsection{Comparison with the Previous Research\texorpdfstring{ on Half-Liberations of $U_n$}{}}
Our results obtained in \cite{MaWe18a} and the present article reproduce in combinatorial terms and extend the previous quantum algebraic research on half-liberations of $U_n$:\par\vspace{0.5em}
\begin{enumerate}[labelwidth=!, wide]
  \item In \cite[Def.~5.5]{BhDADa11} and \cite[Definition~2.8]{BhDADaDa14}, Bhowmick, D'Andrea, Das and Dabrowski introduced the first half-liberation of $U_n$, whose algebra they denoted by $A_u^*(n)$. Later, Banica and Bichon wrote $U_n^\times$ for the corresponding quantum group \cite[Def.~3.2 (3)]{BaBi17}. Its associated category is generated by the partition
		\begin{center}\hspace{2em}
		\begin{tikzpicture}[scale=0.666,baseline=0]
		\node [circle, scale=0.4, draw=black, fill=white] (x1-1) at (0,0) {};
		\node [circle, scale=0.4, draw=black, fill=black] (x2-1) at (1,0) {};
		\node [circle, scale=0.4, draw=black, fill=white] (x3-1) at (2,0) {};
		\node [circle, scale=0.4, draw=black, fill=white] (x1-2) at (0,1.5) {};
		\node [circle, scale=0.4, draw=black, fill=black] (x2-2) at (1,1.5) {};
		\node [circle, scale=0.4, draw=black, fill=white] (x3-2) at (2,1.5) {};
		\draw (x1-1) -- (x3-2);
		\draw (x2-1) -- (x2-2);
		\draw (x3-1) -- (x1-2);
		\end{tikzpicture}.
		\end{center}
It holds \[\langle \PartHalfLibWBW  \rangle=\langle \PartHalfLibBWB\rangle=\langle \PartBracketBWBW\rangle=\langle \PartBracketWBWB\rangle\](see \cite[Lemma~6.18~d)]{MaWe18a}), parts of which can be seen as follows:
    \begin{center}
    \begin{tikzpicture}[scale=0.666]
      \draw [dotted] (-0.5,0) -- (4.5,0);
      \draw [dotted] (-0.5,2) -- (4.5,2);
      \draw [dotted] (-0.5,5) -- (4.5,5);
      \draw [dotted] (-0.5,7) -- (4.5,7);
      \node[scale=0.4,draw=black,circle, fill=white] (z1) at (0,7) {};
      \node[scale=0.4,draw=black,circle, fill=black] (z2) at (1,7) {};
      \node[scale=0.4,draw=black,circle, fill=white] (z3) at (2,7) {};      
      \node[scale=0.4,draw=black,circle, fill=white] (y1) at (0,5) {};
      \node[scale=0.4,draw=black,circle,fill=black] (y2) at (1,5) {};
      \node[scale=0.4,draw=black,circle,fill=white] (y3) at (2,5) {};
      \node[scale=0.4,draw=black,circle,fill=black] (y4) at (3,5) {};
      \node[scale=0.4,draw=black,circle,fill=white] (y5) at (4,5) {};      
      \node[scale=0.4,draw=black,circle,fill=white] (x1) at (0,2) {};
      \node[scale=0.4,draw=black,circle,fill=black] (x2) at (1,2) {};
      \node[scale=0.4,draw=black,circle,fill=white] (x3) at (2,2) {};
      \node[scale=0.4,draw=black,circle,fill=black] (x4) at (3,2) {};
      \node[scale=0.4,draw=black,circle,fill=white] (x5) at (4,2) {};      
      \node[scale=0.4,draw=black,circle,fill=white] (w1) at (0,0) {};
      \node[scale=0.4,draw=black,circle,fill=black] (w2) at (1,0) {};      
      \node[scale=0.4,draw=black,circle,fill=white] (w3) at (4,0) {};
      \draw (w1) to (x1);
      \draw (w2) to (x2);
      \draw (w3) to (x5);      
      \draw (x3) --++(0,-1)-| (x4);
      \draw (x1) --++(0,1)-| (x4);
      \draw (x2) to (y2);
      \draw (x3) to (y3);      
      \draw (x5) to (y5);
      \draw (y1) to (z1);
      \draw (y2) to (z2);
      \draw (y3) to (z3);
      \draw (y4) --++(0,1)-| (y5);
      \draw (y1) --++(0,-1)-| (y4);
      \node at (5,3.5) {$=$};
      \draw [dotted] (5.5,2.5) -- (8.5,2.5);
      \draw [dotted] (5.5,4.5) -- (8.5,4.5);
      \node[scale=0.4,draw=black,circle,fill=white] (a1) at (6,2.5) {};
      \node[scale=0.4,draw=black,circle,fill=black] (a2) at (7,2.5) {};
      \node[scale=0.4,draw=black,circle,fill=white] (a3) at (8,2.5) {};      
      \node[scale=0.4,draw=black,circle,fill=white] (a4) at (6,4.5) {};
      \node[scale=0.4,draw=black,circle,fill=black] (a5) at (7,4.5) {};
      \node[scale=0.4,draw=black,circle,fill=white] (a6) at (8,4.5) {};      
      \draw (a1) to (a6);
      \draw (a2) to (a5);
      \draw (a3) to (a4);
    \end{tikzpicture}
    \quad\quad
    \begin{tikzpicture}[scale=0.666]
      \draw [dotted] (-0.5,0) -- (3.5,0);
      \draw [dotted] (-0.5,2) -- (3.5,2);
      \draw [dotted] (-0.5,5) -- (3.5,5);
      \draw [dotted] (-0.5,7) -- (3.5,7);
      \node[scale=0.4,draw=black,circle,fill=white] (z1) at (0,7) {};
      \node[scale=0.4,draw=black,circle,fill=black] (z2) at (1,7) {};      
      \node[scale=0.4,draw=black,circle,fill=white] (z3) at (2,7) {};
      \node[scale=0.4,draw=black,circle,fill=black] (z4) at (3,7) {};      
      \node[scale=0.4,draw=black,circle,fill=white] (y1) at (0,5) {};
      \node[scale=0.4,draw=black,circle,fill=black] (y2) at (1,5) {};      
      \node[scale=0.4,draw=black,circle,fill=white] (y3) at (2,5) {};
      \node[scale=0.4,draw=black,circle,fill=black] (y4) at (3,5) {};
      \node[scale=0.4,draw=black,circle,fill=white] (x1) at (0,2) {};
      \node[scale=0.4,draw=black,circle,fill=black] (x2) at (1,2) {};      
      \node[scale=0.4,draw=black,circle,fill=white] (x3) at (2,2) {};
      \node[scale=0.4,draw=black,circle,fill=black] (x4) at (3,2) {};
      \node[scale=0.4,draw=black,circle,fill=white] (w1) at (0,0) {};
      \node[scale=0.4,draw=black,circle,fill=black] (w2) at (1,0) {};      
      \node[scale=0.4,draw=black,circle,fill=white] (w3) at (2,0) {};
      \node[scale=0.4,draw=black,circle,fill=black] (w4) at (3,0) {};      
      \draw (w1) to (x3);
      \draw (w3) to (x1);
      \draw (w4) to (x4);
      \draw (x1) to (y1);
      \draw (x3) -- ++(0,1) -| (x4);
      \draw (y3) -- ++(0,-1) -| (y4);
      \draw (y1) to (z3);
      \draw (y3) to (z1);
      \draw (y4) to (z4);
      \draw (w2) to (x2);
      \draw (x2) to (y2);
      \draw (y2) to (z2);      
      \node at (4,3.5) {$=$};
      \draw [dotted] (4.5,5) -- (8.5,5);
      \draw [dotted] (4.5,2) -- (8.5,2);
      \node[scale=0.4,draw=black,circle,fill=white] (a1) at (5,2) {};
      \node[scale=0.4,draw=black,circle,fill=black] (a2) at (6,2) {};
      \node[scale=0.4,draw=black,circle,fill=white] (a3) at (7,2) {};      
      \node[scale=0.4,draw=black,circle,fill=black] (a4) at (8,2) {};      
      \node[scale=0.4,draw=black,circle,fill=white] (b1) at (5,5) {};
      \node[scale=0.4,draw=black,circle,fill=black] (b2) at (6,5) {};
      \node[scale=0.4,draw=black,circle,fill=white] (b3) at (7,5) {};
      \node[scale=0.4,draw=black,circle,fill=black] (b4) at (8,5) {};            
      \draw (a1) --++(0,1) -| (a4);
      \draw (b1) --++(0,-1) -| (b4);      
      \draw (a2) to (b2);
      \draw (a3) to (b3);      
    \end{tikzpicture}
  \end{center}
Hence, the quantum group $U_n^\times$ pertains to the category $\mc I_{\nnint\backslash\{0\}}$ in our notation (see Definition~\ref{definition:categories_I_N}). Alternatively, one could write $\mc I_{\{0\}^c}$ for this category.
\item A second half-liberation of $U_n$ was given by Bichon and Dubois-Violette. In \cite[Ex. 4.10]{BiDu13} they define an algebra $A_u^{**}(n)$ whose intertwiner spaces are generated by the partition
		\begin{gather*}
		\begin{tikzpicture}[scale=0.666,baseline=0]
		\node [circle, scale=0.4, draw=black, fill=white] (x1-1) at (0,0) {};
		\node [circle, scale=0.4, draw=black, fill=white] (x2-1) at (1,0) {};
		\node [circle, scale=0.4, draw=black, fill=white] (x3-1) at (2,0) {};
		\node [circle, scale=0.4, draw=black, fill=white] (x1-2) at (0,1.5) {};
		\node [circle, scale=0.4, draw=black, fill=white] (x2-2) at (1,1.5) {};
		\node [circle, scale=0.4, draw=black, fill=white] (x3-2) at (2,1.5) {};
		\draw (x1-1) -- (x3-2);
		\draw (x2-1) -- (x2-2);
		\draw (x3-1) -- (x1-2);
		\end{tikzpicture}.
		\end{gather*}
In  \cite{BaBi17b} and \cite{BaBi17} Banica and Bichon wrote $U_n^{**}$ for the corresponding quantum group. 
                And $U_n^{\ast\ast}$ appears again as a special case of an entire family of quantum groups introduced by Banica and Bichon in \cite[Def.~7.1]{BaBi17b}. For every $k\in \pint$ (with $0\notin\pint$), their \emph{$k$-half-liberated unitary quantum group} $U_{n,k}^*$ (also  $U_{n,k}$) has its intertwiner spaces generated by:
		\begin{gather*}
		\begin{tikzpicture}[scale=0.666]
		\node [circle, scale=0.4, draw=black, fill=white] (x1-1) at (0,0) {};
		\node [circle, scale=0.4, draw=black, fill=white] (x2-1) at (1,0) {};
		\node [circle, scale=0.4, draw=black, fill=white] (x3-1) at (2,0) {};						
		\node [circle, scale=0.4, draw=black, fill=white] (y1-1) at (5,0) {};
		\node [circle, scale=0.4, draw=black, fill=white] (y2-1) at (6,0) {};
		\node [circle, scale=0.4, draw=black, fill=white] (y3-1) at (7,0) {};
		\node [circle, scale=0.4, draw=black, fill=white] (y4-1) at (8,0) {};
		\node [circle, scale=0.4, draw=black, fill=white] (y5-1) at (9,0) {};
		\node [circle, scale=0.4, draw=black, fill=white] (y6-1) at (10,0) {};															
		\node [circle, scale=0.4, draw=black, fill=white] (z1-1) at (13,0) {};															
		\node [circle, scale=0.4, draw=black, fill=white] (z2-1) at (14,0) {};															
		\node [circle, scale=0.4, draw=black, fill=white] (z3-1) at (15,0) {};					
		\begin{scope}[yshift=3cm]			
		\node [circle, scale=0.4, draw=black, fill=white] (x1-2) at (0,0) {};
		\node [circle, scale=0.4, draw=black, fill=white] (x2-2) at (1,0) {};
		\node [circle, scale=0.4, draw=black, fill=white] (x3-2) at (2,0) {};						
		\node [circle, scale=0.4, draw=black, fill=white] (y1-2) at (5,0) {};
		\node [circle, scale=0.4, draw=black, fill=white] (y2-2) at (6,0) {};
		\node [circle, scale=0.4, draw=black, fill=white] (y3-2) at (7,0) {};
		\node [circle, scale=0.4, draw=black, fill=white] (y4-2) at (8,0) {};
		\node [circle, scale=0.4, draw=black, fill=white] (y5-2) at (9,0) {};
		\node [circle, scale=0.4, draw=black, fill=white] (y6-2) at (10,0) {};															
		\node [circle, scale=0.4, draw=black, fill=white] (z1-2) at (13,0) {};															
		\node [circle, scale=0.4, draw=black, fill=white] (z2-2) at (14,0) {};															
		\node [circle, scale=0.4, draw=black, fill=white] (z3-2) at (15,0) {};						
		\end{scope}						
		\path (x3-1) edge[draw=none] node {$\ldots$} (y1-1);
		\path (y6-1) edge[draw=none] node {$\ldots$} (z1-1);				
		\path (x3-2) edge[draw=none] node {$\ldots$} (y1-2);
		\path (y6-2) edge[draw=none] node {$\ldots$} (z1-2);								
		\draw (x1-1) -- (y4-2);
		\draw (x2-1) -- (y5-2);
		\draw (x3-1) -- (y6-2);								
		\draw (x1-2) -- (y4-1);
		\draw (x2-2) -- (y5-1);
		\draw (x3-2) -- (y6-1);								
		\draw (z1-1) -- (y1-2);
		\draw (z2-1) -- (y2-2);
		\draw (z3-1) -- (y3-2);								
		\draw (z1-2) -- (y1-1);
		\draw (z2-2) -- (y2-1);
		\draw (z3-2) -- (y3-1);												
		\draw [dotted] (-0.25,-0.25) -- ++(0,-0.25) -- (7.25,-0.5) -- ++ (0,0.25);
		\draw [dotted, xshift=8cm] (-0.25,-0.25) -- ++(0,-0.25) -- (7.25,-0.5) -- ++ (0,0.25);
		\node at (3.5,-1) {$k$ times};
		\node at (11.5,-1) {$k$ times};				
		\end{tikzpicture}
              \end{gather*}
              It holds $U_n=U_{n,1}^*$ and $U_n^{**}=U_{n,2}^*$. In \cite[Section~9]{MaWe18a}, we showed that $U_{n,k}^*$ corresponds to our category $\mathcal S_k$ (see after Main Theorem~\ref{theorem:main_2} of the present article) for every $k\in\mathbb N$. However, the quantum group associated with the category $\mc S_0$ is not considered in \cite{BaBi17b} nor \cite{BaBi17}.
              \item In contrast, another half-liberation of $U_n$ which is studied by  Banica and Bichon is $U_{n}^*$ (\cite[Definition~8.3]{BaBi17b}), also denoted $U_{n,\infty}^*$ (\cite[Definition~4.1~ (3)]{BaBi17}). They obtain it as some quantum algebraic limit case of their series $( U_{n,k}^* )_{k\in\mathbb{N}}$ and show that its associated category is generated by the two partitions
		\begin{gather*}
		\begin{tikzpicture}[scale=0.666,baseline=0]
		\node [circle, scale=0.4, draw=black, fill=black] (x1-1) at (0,0) {};
		\node [circle, scale=0.4, draw=black, fill=white] (x2-1) at (1,0) {};
		\node [circle, scale=0.4, draw=black, fill=white] (x3-1) at (2,0) {};
		\node [circle, scale=0.4, draw=black, fill=black] (x4-1) at (3,0) {};
		\node [circle, scale=0.4, draw=black, fill=white] (x1-2) at (0,2) {};
		\node [circle, scale=0.4, draw=black, fill=black] (x2-2) at (1,2) {};
		\node [circle, scale=0.4, draw=black, fill=black] (x3-2) at (2,2) {};
		\node [circle, scale=0.4, draw=black, fill=white] (x4-2) at (3,2) {};
		\draw (x1-1) -- (x3-2);
		\draw (x2-1) -- (x4-2);
		\draw (x3-1) -- (x1-2);
		\draw (x4-1) -- (x2-2);									
		\end{tikzpicture}
		\quad\quad\raisebox{0.555cm}{\text{and}}\quad\quad
		\begin{tikzpicture}[scale=0.666,baseline=0]
		\node [circle, scale=0.4, draw=black, fill=white] (x1-1) at (0,0) {};
		\node [circle, scale=0.4, draw=black, fill=black] (x2-1) at (1,0) {};
		\node [circle, scale=0.4, draw=black, fill=white] (x3-1) at (2,0) {};
		\node [circle, scale=0.4, draw=black, fill=black] (x4-1) at (3,0) {};
		\node [circle, scale=0.4, draw=black, fill=white] (x1-2) at (0,2) {};
		\node [circle, scale=0.4, draw=black, fill=black] (x2-2) at (1,2) {};
		\node [circle, scale=0.4, draw=black, fill=white] (x3-2) at (2,2) {};
		\node [circle, scale=0.4, draw=black, fill=black] (x4-2) at (3,2) {};
		\draw (x1-1) -- (x3-2);
		\draw (x2-1) -- (x4-2);
		\draw (x3-1) -- (x1-2);
		\draw (x4-1) -- (x2-2);									
		\end{tikzpicture}.
              \end{gather*}
              Cyclically rotating both these partition counterclockwise once, reveals them to generate the same category as $\PartBracketBBWW$ and $\PartBracketBWBW$. Since $\langle \PartBracketBWBW\rangle=\langle \PartHalfLibWBW\rangle$ as seen above, it is our category $\mc I_{\mathbb N_0\backslash\{0,1\}}$ (also denoted $\mc I_{\{0,1\}^c}$) that is associated with the quantum group $U_{n}^*$. Especially, $U_{n}^*$ does not correspond to $\mc S_0=\mc I_\emptyset$, which, arguably, is a limit case of $(\mc S_w)_{w\in \pint}$ in a combinatorial sense.
                
            \end{enumerate}
            \par\vspace{0.5em}
Summarizing, the half-liberations of $U_n$ previously known correspond precisely to our categories $(\mathcal S_w)_{w\in \mathbb N}$, $\mathcal I_{\nnint\backslash\{0\}}$ and $\mathcal I_{\nnint\backslash\{0,1\}}$. Equivalently, the categories $\mc S_0=\mc I_\emptyset$ and $\mc I_D$ for $D\in \mc D$ with $D\neq \nnint\backslash\{0\}, \nnint\backslash\{0,1\}$ from \cite[Def.~4.1]{MaWe18a} and Definition~\ref{definition:categories_I_N} were heretofore unknown. Especially we see that the quantum groups $U_n^\times$ and $U_{n}^*$ (in the notation of \cite{BaBi17b}) represent merely special cases of our family $(\mathcal I_D)_{D\in \mc D}$. Moreover, from a combinatorial viewpoint, the half-liberations $(\mathcal S_k)_{k\in \mathbb N}\sim (U_{n,k}^*)_{k\in \pint}$ give rise as a limit case to $\mathcal I_\emptyset=\mathcal S_0$ rather than $\mc I_{\nnint\backslash\{0,1\}}\sim U_{n,\infty}^*=U_n^*$. This is not the only way our results provide a different viewpoint on the previous research into the topic of half-liberating $U_n$.
\subsection{New Perspective on the Half-Liberation Procedure}
\label{subsection:new-perspective}
Our article \cite{MaWe18a} and the present follow-up to it shed a different light on the idea of half-liberating the classical unitary group:
\par
 When considering possible half-liberations of $U_n$, a natural starting point is to simply use the half-commutation relations $abc=cba$ in the unitary case as well. This leads to the category $\mathcal{S}_2=\left\langle\PartHalfLibWWW\right\rangle$.
		For $k\geq 3$, the quantum groups $U_{n,k}^*\sim \mc S_k$ expand on these relations in the following way:
		For all $w\in\mathbb{N}$, the partition
		\begin{gather*}
		\begin{tikzpicture}[scale=0.666]
		\node[scale=0.4, circle, draw=black, fill=white] (x1-1) at (0,0) {};
		\node[scale=0.4, circle, draw=black, fill=white] (y1-1) at (1,0) {};
		\node[scale=0.4, circle, draw=black, fill=white] (y2-1) at (2,0) {};
		\node[scale=0.4, circle, draw=black, fill=white] (y3-1) at (4,0) {};
		\node[scale=0.4, circle, draw=black, fill=white] (y4-1) at (5,0) {};
		\node[scale=0.4, circle, draw=black, fill=white] (x2-1) at (6,0) {};			
		\node[scale=0.4, circle, draw=black, fill=white] (x1-2) at (0,2) {};
		\node[scale=0.4, circle, draw=black, fill=white] (y1-2) at (1,2) {};
		\node[scale=0.4, circle, draw=black, fill=white] (y2-2) at (2,2) {};
		\node[scale=0.4, circle, draw=black, fill=white] (y3-2) at (4,2) {};
		\node[scale=0.4, circle, draw=black, fill=white] (y4-2) at (5,2) {};
		\node[scale=0.4, circle, draw=black, fill=white] (x2-2) at (6,2) {};				
		\path (y2-1) edge[draw=none] node {$\ldots$} (y3-1);
		\path (y2-2) edge[draw=none] node {$\ldots$} (y3-2);	
		\draw (x1-1) -- (x2-2);
		\draw (x2-1) -- (x1-2);	
		\draw (y1-1) -- (y1-2);
		\draw (y2-1) -- (y2-2);
		\draw (y3-1) -- (y3-2);
		\draw (y4-1) -- (y4-2);			
		\draw [dotted] (0.75,-0.25) -- ++(0,-0.25) -- (5.25,-0.5) -- ++(0,0.25);
		\node at (3,-1) {$w-1$ times};
		\end{tikzpicture}
		\end{gather*}
		can be seen to generate the category $\mathcal{S}_w$ (\cite[Sect.~8.1]{MaWe18a}). 
                This partition corresponds to the relations $ab_1\ldots b_{w-1}c =cb_1\ldots b_{w-1}a$. Here, we see the half-commutation relations generalized to what might be called a \enquote{paradigm of transpositions}: Only two factors switch places.
The quantum groups $U_n^* \sim\mathcal{I}_{\nnint\backslash\{0,1\}}$ and $U_{n}^\times \sim \mathcal{I}_{\mathbb N_0\backslash\{0\}}$ (in the notation of \cite{BaBi17}) are still following this principle exactly. The category $\mathcal{I}_{\nnint\backslash\{0\}}$ is generated by $\PartHalfLibWBW$. And, likewise, the partition
		\begin{gather*}
		\begin{tikzpicture}[scale=0.666,baseline=(current  bounding  box.center)]
		\node [circle, scale=0.4, draw=black, fill=white] (x1-1) at (0,0) {};
		\node [circle, scale=0.4, draw=black, fill=black] (y1-1) at (1,0) {};
		\node [circle, scale=0.4, draw=black, fill=black] (y2-1) at (2,0) {};
		\node [circle, scale=0.4, draw=black, fill=white] (y3-1) at (3,0) {};
		\node [circle, scale=0.4, draw=black, fill=white] (x2-1) at (4,0) {};
		\node [circle, scale=0.4, draw=black, fill=white] (x1-2) at (0,2) {};
		\node [circle, scale=0.4, draw=black, fill=black] (y1-2) at (1,2) {};
		\node [circle, scale=0.4, draw=black, fill=black] (y2-2) at (2,2) {};
		\node [circle, scale=0.4, draw=black, fill=white] (y3-2) at (3,2) {};
		\node [circle, scale=0.4, draw=black, fill=white] (x2-2) at (4,2) {};
		\draw (x1-1) -- (x2-2);
		\draw (x2-1) -- (x1-2);
		\draw (y1-1) -- (y1-2);
		\draw (y2-1) -- (y2-2);
		\draw (y3-1) -- (y3-2);		
		\end{tikzpicture}
		\end{gather*}
can be shown to generate $\mathcal{I}_{\nnint\backslash\{0,1\}}$. And one can continue this line of thinking to describe the entire family $(\mathcal{I}_{D})_{D\in\mc D,0\notin D}$, where $\mc D$ denotes the set of all subsemigroups of $(\nnint,+)$. For example, one  can prove that the partition
		\begin{gather*}
		\begin{tikzpicture}[scale=0.666, baseline=(current  bounding  box.center)]
		\node[scale=0.4, circle, draw=black, fill=white] (x1-1) at (0,0) {};
		\node[scale=0.4, circle, draw=black, fill=black] (y1-1) at (1,0) {};
		\node[scale=0.4, circle, draw=black, fill=black] (y2-1) at (2,0) {};
		\node[scale=0.4, circle, draw=black, fill=black] (y3-1) at (4,0) {};
		\node[scale=0.4, circle, draw=black, fill=black] (y4-1) at (5,0) {};
		\node[scale=0.4, circle, draw=black, fill=white] (z1-1) at (6,0) {};
		\node[scale=0.4, circle, draw=black, fill=white] (z2-1) at (7,0) {};			
		\node[scale=0.4, circle, draw=black, fill=white] (z3-1) at (9,0) {};
		\node[scale=0.4, circle, draw=black, fill=white] (z4-1) at (10,0) {};
		\node[scale=0.4, circle, draw=black, fill=white] (x2-1) at (11,0) {};			
		\node[scale=0.4, circle, draw=black, fill=white] (x1-2) at (0,2) {};
		\node[scale=0.4, circle, draw=black, fill=black] (y1-2) at (1,2) {};
		\node[scale=0.4, circle, draw=black, fill=black] (y2-2) at (2,2) {};
		\node[scale=0.4, circle, draw=black, fill=black] (y3-2) at (4,2) {};
		\node[scale=0.4, circle, draw=black, fill=black] (y4-2) at (5,2) {};
		\node[scale=0.4, circle, draw=black, fill=white] (z1-2) at (6,2) {};
		\node[scale=0.4, circle, draw=black, fill=white] (z2-2) at (7,2) {};			
		\node[scale=0.4, circle, draw=black, fill=white] (z3-2) at (9,2) {};
		\node[scale=0.4, circle, draw=black, fill=white] (z4-2) at (10,2) {};
		\node[scale=0.4, circle, draw=black, fill=white] (x2-2) at (11,2) {};	
		\path (y2-1) edge[draw=none] node {$\ldots$} (y3-1);
		\path (z2-1) edge[draw=none] node {$\ldots$} (z3-1);	
		\path (y2-2) edge[draw=none] node {$\ldots$} (y3-2);
		\path (z2-2) edge[draw=none] node {$\ldots$} (z3-2);				
		\draw (x1-1) -- (x2-2);
		\draw (x2-1) -- (x1-2);	
		\draw (y1-1) -- (y1-2);
		\draw (y2-1) -- (y2-2);
		\draw (y3-1) -- (y3-2);
		\draw (y4-1) -- (y4-2);			
		\draw (z1-1) -- (z1-2);
		\draw (z2-1) -- (z2-2);
		\draw (z3-1) -- (z3-2);
		\draw (z4-1) -- (z4-2);						
		\draw [dotted] (0.75,-0.25) -- ++(0,-0.25) -- (5.25,-0.5) -- ++(0,0.25);
		\draw [dotted, xshift=5cm] (0.75,-0.25) -- ++(0,-0.25) -- (5.25,-0.5) -- ++(0,0.25);			
		\node at (3,-1) {$v+1$ times};
		\node at (8,-1) {$v$ times};			
		\end{tikzpicture}
		\end{gather*}
		generates $\mathcal{I}_{\nnint\backslash\{0,\ldots,v\}}$ for a given arbitrary $v\in\mathbb{N}_0$. 
That means all half-liberations known previously to the present article, including the ones from \cite{MaWe18a}, are subsumed by this transpositional paradigm. \par
But the process of half-liberating $U_n$ on the way to $U_n^+$ is not exhausted by this scheme of \emph{transposing} single pairs of factors. This is first evidenced in this article by the family $(\mc I_{D})_{D\in \mc D,0\in D}$, the categories in the \enquote{monoid case} (see Definiton~\ref{definition:monoid_case_non_monoid_case}).
The category $\mathcal I_{\nnint\backslash\{1,\ldots,v\}}$, for example, is generated by the partition
\begin{align*}
  \begin{tikzpicture}[scale=0.666, baseline=-0.111cm]
    \node [circle, scale=0.4, draw=black, fill=black] (x1) at (0,0) {};
    \node [circle, scale=0.4, draw=black, fill=black] (x2) at (1,0) {};
    \node [circle, scale=0.4, draw=black, fill=black] (x3) at (3.5,0) {};
    \node [circle, scale=0.4, draw=black, fill=white] (x4) at (4.5,0) {};    
    \node [circle, scale=0.4, draw=black, fill=white] (x5) at (7,0) {};
    \node [circle, scale=0.4, draw=black, fill=white] (x6) at (8,0) {};
    \node [circle, scale=0.4, draw=black, fill=black] (y1) at (0,3) {};
    \node [circle, scale=0.4, draw=black, fill=black] (y2) at (1,3) {};
    \node [circle, scale=0.4, draw=black, fill=black] (y3) at (3.5,3) {};
    \node [circle, scale=0.4, draw=black, fill=white] (y4) at (4.5,3) {};    
    \node [circle, scale=0.4, draw=black, fill=white] (y5) at (7,3) {};
    \node [circle, scale=0.4, draw=black, fill=white] (y6) at (8,3) {};
    \draw (x1) -- ++(0,1) -| (x6);
    \draw (y1) -- ++(0,-1) -| (y6);
    \draw (x2) -- (y2);
    \draw (x3) -- (y3);
    \draw (x4) -- (y4);
    \draw (x5) -- (y5);
    \draw [densely dotted] (0.8,-0.5) --++(0,-0.25) -- (3.7,-0.75) -- ++ (0,0.25);
    \draw [densely dotted] (4.3,-0.5) --++(0,-0.25) -- (7.2,-0.75) -- ++ (0,0.25);    
    \node at (2.25,-1.2) {$v$ times};
    \node at (5.75,-1.2) {$v$ times};
    \node at (2.25,0) {\ldots};
    \node at (2.25,3) {\ldots};
    \node at (5.75,0) {\ldots};
    \node at (5.75,3) {\ldots};        
  \end{tikzpicture},
\end{align*}
which cannot be written in the form of a transposition. The relations of the associated easy quantum group do \emph{not} take the shape $ac_1^*\ldots c_v^*d_1\ldots d_vb=bc_1^*\ldots c_v^*d_1\ldots d_v a$ as in the case of $\mc I_{\nnint\backslash\{0,\ldots,v\}}$; see also Section~\ref{subsection:C-star-relations}.\par 
In light of this fact, one is tempted to further generalize the paradigm of transpositions to one of \emph{permutations}:
The generator of $\mc I_{\nnint\backslash\{1,\ldots,v\}}$ can at least be expressed in the form of the -- non-transpositional -- permutation
\begin{align*}
  \begin{tikzpicture}[scale=0.666, baseline=-0.111cm]
    \node [circle, scale=0.4, draw=black, fill=white] (x1) at (0,0) {};
    \node [circle, scale=0.4, draw=black, fill=black] (x2) at (1,0) {};
    \node [circle, scale=0.4, draw=black, fill=black] (x3) at (2,0) {};
    \node [circle, scale=0.4, draw=black, fill=black] (x4) at (4.5,0) {};    
    \node [circle, scale=0.4, draw=black, fill=white] (x5) at (5.5,0) {};
    \node [circle, scale=0.4, draw=black, fill=white] (x6) at (8,0) {};
    \node [circle, scale=0.4, draw=black, fill=black] (y1) at (0,3) {};
    \node [circle, scale=0.4, draw=black, fill=black] (y2) at (2.5,3) {};
    \node [circle, scale=0.4, draw=black, fill=white] (y3) at (3.5,3) {};
    \node [circle, scale=0.4, draw=black, fill=white] (y4) at (6,3) {};    
    \node [circle, scale=0.4, draw=black, fill=white] (y5) at (7,3) {};
    \node [circle, scale=0.4, draw=black, fill=black] (y6) at (8,3) {};
    \draw (x1) -- (y5);
    \draw (x2) -- (y6);
    \draw (x3) -- (y1);
    \draw (x4) -- (y2);
    \draw (x5) -- (y3);
    \draw (x6) -- (y4);
    \draw [densely dotted] (1.8,-0.5) --++(0,-0.25) -- (4.7,-0.75) -- ++ (0,0.25);
    \draw [densely dotted] (5.3,-0.5) --++(0,-0.25) -- (8.2,-0.75) -- ++ (0,0.25);    
    \node at (3.25,-1.2) {$v$ times};
    \node at (6.75,-1.2) {$v$ times};
    \node at (3.25,0) {\ldots};
    \node at (1.25,3) {\ldots};
    \node at (6.75,0) {\ldots};
    \node at (4.75,3) {\ldots};     
  \end{tikzpicture},
\end{align*}
meaning that relations of the sort $ab^* c_1^*\ldots c_v^*d_1\ldots d_v=c_1^*\ldots c_vd_1\ldots d_v ab^*$ hold in the corresponding easy quantum group. Still, some kind of partial commutativity, permutability characterizes them compared to $U_n^+$. While this modification works for $\mc I_{\nnint\backslash\{1,\ldots,v\}}$, for general $D\in \mc D$ with $0\in D$, we cannot write the generator of $\mc I_D$ as a permutation: Take $\mc I_{\nnint\backslash\{1,2,5\}}$ as a counterexample:
\begin{align*}
  \begin{tikzpicture}[scale=0.666]
    \node [circle, scale=0.4, draw=black, fill=black] (x1) at (0,0) {};
    \node [circle, scale=0.4, draw=black, fill=black] (x2) at (1,0) {};
    \node [circle, scale=0.4, draw=black, fill=black] (x3) at (2,0) {};
    \node [circle, scale=0.4, draw=black, fill=black] (x4) at (3,0) {};
    \node [circle, scale=0.4, draw=black, fill=black] (x5) at (4,0) {};
    \node [circle, scale=0.4, draw=black, fill=black] (x6) at (5,0) {};
    \node [circle, scale=0.4, draw=black, fill=white] (x7) at (6,0) {};
    \node [circle, scale=0.4, draw=black, fill=white] (x8) at (7,0) {};
    \node [circle, scale=0.4, draw=black, fill=white] (x9) at (8,0) {};
    \node [circle, scale=0.4, draw=black, fill=white] (x10) at (9,0) {};
    \node [circle, scale=0.4, draw=black, fill=white] (x11) at (10,0) {};
    \node [circle, scale=0.4, draw=black, fill=white] (x12) at (11,0) {};
    \node [circle, scale=0.4, draw=black, fill=black] (y1) at (0,7) {};
    \node [circle, scale=0.4, draw=black, fill=black] (y2) at (1,7) {};
    \node [circle, scale=0.4, draw=black, fill=black] (y3) at (2,7) {};
    \node [circle, scale=0.4, draw=black, fill=black] (y4) at (3,7) {};
    \node [circle, scale=0.4, draw=black, fill=black] (y5) at (4,7) {};
    \node [circle, scale=0.4, draw=black, fill=black] (y6) at (5,7) {};
    \node [circle, scale=0.4, draw=black, fill=white] (y7) at (6,7) {};
    \node [circle, scale=0.4, draw=black, fill=white] (y8) at (7,7) {};
    \node [circle, scale=0.4, draw=black, fill=white] (y9) at (8,7) {};
    \node [circle, scale=0.4, draw=black, fill=white] (y10) at (9,7) {};
    \node [circle, scale=0.4, draw=black, fill=white] (y11) at (10,7) {};
    \node [circle, scale=0.4, draw=black, fill=white] (y12) at (11,7) {};            
    \draw (x1) -- ++(0,3) -| (x12);
    \draw (y1) -- ++(0,-3) -| (y12);
    \draw (x2) -- (y2);
    \draw (x3) -- (y3);
    \draw (x6) -- (y6);
    \draw (x7) -- (y7);
    \draw (x10) -- (y10);
    \draw (x11) -- (y11);
    \draw (x4) -- ++(0,2) -| (x9);
    \draw (x5) -- ++(0,1) -| (x8);
    \draw (y4) -- ++(0,-2) -| (y9);
    \draw (y5) -- ++(0,-1) -| (y8);    
  \end{tikzpicture}.
\end{align*}
It is impossible to write this generator in a form where all strings connect the two rows. There is no permutation $\pi$ such that one could express the relations in the quantum group as $a_1\ldots a_m=a_{\pi(1)}\ldots a_{\pi(m)}$.
\par
In conclusion, except for special cases, \emph{half-liberation} does not mean \emph{half-commutation} in the unitary case, not even in the sense of non-transpositional permutation.
\par
It seems that none of the shapes
\begin{gather*}
  		\begin{tikzpicture}[scale=0.666,baseline=-0.25cm*0.666]
		\node[scale=0.4, draw=black, fill=gray] (a0) at (0,0) {};
		\node[scale=0.4, draw=black, fill=gray] (a1) at (1,0) {};
		\node[scale=0.4, draw=black, fill=gray] (a2) at (2,0) {};
		\node[scale=0.4, draw=black, fill=gray] (b0) at (0,1.5) {};
		\node[scale=0.4, draw=black, fill=gray] (b1) at (1,1.5) {};
		\node[scale=0.4, draw=black, fill=gray] (b2) at (2,1.5) {};
		\draw (a0) -- (b2);
		\draw (a1) -- (b1);				
		\draw (a2) -- (b0);				
		\end{tikzpicture}\raisebox{0.166cm}{,}\;\;
		\begin{tikzpicture}[scale=0.666, baseline=0]
		\node[scale=0.4, draw=black, fill=gray] (x1-1) at (0,0) {};
		\node[scale=0.4, draw=black, fill=gray] (y1-1) at (1,0) {};
		\node[scale=0.4, draw=black, fill=gray] (y2-1) at (3,0) {};
		\node[scale=0.4, draw=black, fill=gray] (x2-1) at (4,0) {};			
		\node[scale=0.4, draw=black, fill=gray] (x1-2) at (0,2) {};
		\node[scale=0.4, draw=black, fill=gray] (y1-2) at (1,2) {};
		\node[scale=0.4, draw=black, fill=gray] (y2-2) at (3,2) {};
		\node[scale=0.4, draw=black, fill=gray] (x2-2) at (4,2) {};	
		\path (y1-1) edge[draw=none] node {$\ldots$} (y2-1);										
		\path (y1-2) edge[draw=none] node {$\ldots$} (y2-2);														
		\draw (x1-1) -- (x2-2);
		\draw (x1-2) -- (x2-1);				
		\draw (y1-1) -- (y1-2);				
		\draw (y2-1) -- (y2-2);								
              \end{tikzpicture},\;\;
      		\begin{tikzpicture}[scale=0.666,baseline=0]
		\node[scale=0.4, draw=black, fill=gray] (a0) at (0,0) {};
		\node[scale=0.4, draw=black, fill=gray] (a1) at (1,0) {};
		\node[scale=0.4, draw=black, fill=gray] (a2) at (2,0) {};
		\node[scale=0.4, draw=black, fill=gray] (a3) at (3,0) {};                
		\node[scale=0.4, draw=black, fill=gray] (b0) at (0,2) {};
		\node[scale=0.4, draw=black, fill=gray] (b1) at (1,2) {};
		\node[scale=0.4, draw=black, fill=gray] (b2) at (2,2) {};
		\node[scale=0.4, draw=black, fill=gray] (b3) at (3,2) {};                
		\draw (a0) -- (b2);
		\draw (a1) -- (b3);				
		\draw (a2) -- (b0);				
		\draw (a3) -- (b1);				
              \end{tikzpicture}\;\;\raisebox{0.555cm}{\text{or}}\;\;
		\begin{tikzpicture}[scale=0.666, baseline=0.25cm*0.666]
		\node [scale=0.4, draw=black, fill=gray] (x1-1) at (0,0) {};
		\node [scale=0.4, draw=black, fill=gray] (x2-1) at (1,0) {};
		\node [scale=0.4, draw=black, fill=gray] (y1-1) at (3,0) {};
		\node [scale=0.4, draw=black, fill=gray] (y2-1) at (4,0) {};
		\node [scale=0.4, draw=black, fill=gray] (y4-1) at (5,0) {};
		\node [scale=0.4, draw=black, fill=gray] (y5-1) at (6,0) {};
		\node [scale=0.4, draw=black, fill=gray] (z1-1) at (8,0) {};
		\node [scale=0.4, draw=black, fill=gray] (z2-1) at (9,0) {};
		\begin{scope}[yshift=2.5cm]			
		\node [scale=0.4, draw=black, fill=gray] (x1-2) at (0,0) {};
		\node [scale=0.4, draw=black, fill=gray] (x2-2) at (1,0) {};
		\node [scale=0.4, draw=black, fill=gray] (y1-2) at (3,0) {};
		\node [scale=0.4, draw=black, fill=gray] (y2-2) at (4,0) {};
		\node [scale=0.4, draw=black, fill=gray] (y4-2) at (5,0) {};
		\node [scale=0.4, draw=black, fill=gray] (y5-2) at (6,0) {};
		\node [scale=0.4, draw=black, fill=gray] (z1-2) at (8,0) {};
		\node [scale=0.4, draw=black, fill=gray] (z2-2) at (9,0) {};
		\end{scope}						
		\path (x2-1) edge[draw=none] node {$\ldots$} (y1-1);
		\path (y5-1) edge[draw=none] node {$\ldots$} (z1-1);
		\path (x2-2) edge[draw=none] node {$\ldots$} (y1-2);
		\path (y5-2) edge[draw=none] node {$\ldots$} (z1-2);
		\draw (x1-1) -- (y4-2);
		\draw (x2-1) -- (y5-2);
		\draw (x1-2) -- (y4-1);
		\draw (x2-2) -- (y5-1);
		\draw (z1-1) -- (y1-2);
		\draw (z2-1) -- (y2-2);
		\draw (z1-2) -- (y1-1);
		\draw (z2-2) -- (y2-1);
			\end{tikzpicture}
		\end{gather*}
		capture the true spirit of half-liberation in the unitary case. Rather, it is the bracket structure

                \begin{gather*}
                	\begin{tikzpicture}[scale=0.666,baseline=0]
	\node [scale=0.4, fill=black, draw=black,circle] (x1) at (0,0) {};
	\node [scale=0.4, fill=white, draw=black,circle] (x2) at (4,0) {};		
	\node [scale=0.4, fill=black, draw=black,circle] (y1) at (0,3) {};
	\node [scale=0.4, fill=white, draw=black,circle] (y2) at (4,3) {};				
	\draw [fill=lightgray] (1,-0.166) rectangle (3,3.166);
	\draw (x1) -- ++ (0,1) -| (x2);
	\draw (y1) -- ++ (0,-1) -| (y2);
	\end{tikzpicture},  
      \end{gather*}
      more precisely,
		\begin{gather*}
		\begin{tikzpicture}[scale=0.666,baseline=1.5cm*0.666]
		\node[circle, scale=0.4, draw=black, fill=black] (x1-1) at (0,0) {};
		\node[circle, scale=0.4, draw=black, fill=white] (y1-1) at (1,0) {};
		\node[circle, scale=0.4, draw=black, fill=white] (y2-1) at (3,0) {};
		\node[circle, scale=0.4, draw=black, fill=white] (x2-1) at (4,0) {};			
		\node[circle, scale=0.4, draw=black, fill=black] (x1-2) at (0,3) {};
		\node[circle, scale=0.4, draw=black, fill=white] (y1-2) at (1,3) {};
		\node[circle, scale=0.4, draw=black, fill=white] (y2-2) at (3,3) {};
		\node[circle, scale=0.4, draw=black, fill=white] (x2-2) at (4,3) {};	
		\path (y1-1) edge[draw=none] node {$\ldots$} (y2-1);										
		\path (y1-2) edge[draw=none] node {$\ldots$} (y2-2);														
		\draw (x1-1) -- ++(0,1)-| (x2-1);
		\draw (x1-2) -- ++(0,-1)-| (x2-2);
		\draw (y1-1) -- (y1-2);				
		\draw (y2-1) -- (y2-2);								
              \end{tikzpicture}
              \quad \raisebox{0cm}{\text{and}} \quad
                                    \begin{tikzpicture}[scale=0.666,baseline=2.35cm*0.666]
      \node[circle, scale=0.4, draw=black, fill=black] (x1) at (0,0) {};
      \node[circle, scale=0.4, draw=black, fill=white] (x2) at (11,0) {};
      \node[circle, scale=0.4, draw=black, fill=black] (x3) at (0,5) {};
      \node[circle, scale=0.4, draw=black, fill=white] (x4) at (11,5) {};
      \node[circle, scale=0.4, draw=black, fill=black] (y1) at (5,0) {};
      \node[circle, scale=0.4, draw=black, fill=white] (y2) at (6,0) {};
      \node[circle, scale=0.4, draw=black, fill=black] (y3) at (5,5) {};
      \node[circle, scale=0.4, draw=black, fill=white] (y4) at (6,5) {};
      \node[circle, scale=0.4, draw=black, fill=black] (z1) at (2,0) {};
      \node[circle, scale=0.4, draw=black, fill=white] (z2) at (9,0) {};
      \node[circle, scale=0.4, draw=black, fill=black] (z3) at (2,5) {};
      \node[circle, scale=0.4, draw=black, fill=white] (z4) at (9,5) {};
      \node[circle, scale=0.4, draw=black, fill=black] (w1) at (3.5,0) {};
      \node[circle, scale=0.4, draw=black, fill=white] (w2) at (7.5,0) {};
      \node[circle, scale=0.4, draw=black, fill=black] (w3) at (3.5,5) {};
      \node[circle, scale=0.4, draw=black, fill=white] (w4) at (7.5,5) {};                  
      \draw (x1) -- ++(0,2) -| (x2);
      \draw (x3) -- ++(0,-2) -| (x4);
      \draw (y1) to (y3);
      \draw (y2) to (y4);
      \draw (z1) -- ++(0,1) -| (z2);
      \draw (z3) -- ++(0,-1) -| (z4);
      \draw (w1) to (w3);
      \draw (w2) to (w4);
      \draw[dashed] (5.5,2.5) -- (5.5,-0.15);
      \draw[dashed] (5.5,2.5) -- (5.5,5.2);
      \draw[dashed] (5.5,2.5) -- (-0.2,2.5);
      \draw[dashed] (5.5,2.5) -- (11.2,2.5);      
      \path (x1)  -- node [pos=0.5, above=2pt] {$\ldots$} (z1);
      \path (z1)  -- node [pos=0.5, above=2pt] {$\ldots$} (w1);
      \path (w1)  -- node [pos=0.5, above=2pt] {$\ldots$} (y1);
      \path (x3)  -- node [pos=0.5, below=2pt] {$\ldots$} (z3);
      \path (z3)  -- node [pos=0.5, below=2pt] {$\ldots$} (w3);
      \path (w3)  -- node [pos=0.5, below=2pt] {$\ldots$} (y3);
      \path (x2)  -- node [pos=0.5, above=2pt] {$\ldots$} (z2);
      \path (z2)  -- node [pos=0.5, above=2pt] {$\ldots$} (w2);
      \path (w2)  -- node [pos=0.5, above=2pt] {$\ldots$} (y2);
      \path (x4)  -- node [pos=0.5, below=2pt] {$\ldots$} (z4);
      \path (z4)  -- node [pos=0.5, below=2pt] {$\ldots$} (w4);
      \path (w4)  -- node [pos=0.5, below=2pt] {$\ldots$} (y4);
    \end{tikzpicture}
		\end{gather*}
		that one should consider. So, heuristically, the generator
		\begin{gather*}
		\begin{tikzpicture}[scale=0.666, baseline=(current  bounding  box.center)]
		\node[scale=0.4, draw=black, fill=gray] (x1-1) at (0,0) {};
		\node[scale=0.4, draw=black, fill=gray] (y1-1) at (1,0) {};
		\node[scale=0.4, draw=black, fill=gray] (y2-1) at (2,0) {};
		\node[scale=0.4, draw=black, fill=gray] (x2-1) at (3,0) {};			
		\node[scale=0.4, draw=black, fill=gray] (x1-2) at (0,3) {};
		\node[scale=0.4, draw=black, fill=gray] (y1-2) at (1,3) {};
		\node[scale=0.4, draw=black, fill=gray] (y2-2) at (2,3) {};
		\node[scale=0.4, draw=black, fill=gray] (x2-2) at (3,3) {};	
		\draw (x1-1) -- ++(0,1)-| (x2-1);
		\draw (x1-2) -- ++(0,-1)-| (x2-2);
		\draw (y1-1) -- (y1-2);				
		\draw (y2-1) -- (y2-2);								
		\end{tikzpicture}
		\quad\text{instead of}\quad
		\begin{tikzpicture}[scale=0.666, baseline=(current  bounding  box.center)]
		\node[scale=0.4, draw=black, fill=gray] (x1-1) at (0,0) {};
		\node[scale=0.4, draw=black, fill=gray] (x2-1) at (1,0) {};
		\node[scale=0.4, draw=black, fill=gray] (x3-1) at (2,0) {};
		\node[scale=0.4, draw=black, fill=gray] (x1-2) at (0,1.5) {};			
		\node[scale=0.4, draw=black, fill=gray] (x2-2) at (1,1.5) {};
		\node[scale=0.4, draw=black, fill=gray] (x3-2) at (2,1.5) {};
		\draw (x1-1) -- (x3-2);				
		\draw (x2-1) -- (x2-2);				
		\draw (x3-1) -- (x1-2);								
		\end{tikzpicture}
		\end{gather*}
		is the right one to import from the orthogonal case and an appropriate way to view half-liberation as presented in this article.

                \subsection{$C^*$-Algebraic Relations}
                \label{subsection:C-star-relations}
                It is straightforward to identify the $C^*$-algebras of the quantum groups associated with the categories $(\mc S_w)_{w\in \nnint}$ and $(\mc I_D)_{D\in \mc D}$, see \cite{TaWe15b} and \cite{We17b,We17c}. For the convenience of the reader, we list the $C^*$-algebraic relations corresponding to the generators of the categories $(\mc S_w)_{w\in \nnint}$ and $(\mc I_D)_{D\in \mc D}$:
                \par
                \begin{enumerate}[wide]
                \item 

                The partition $\PartHalfLibWBW$ induces the following relations: For all $a,b,c\in \{u_{i,j}\}_{i,j=1}^n$
                \begin{align*}
                  ab^*c=cb^*a.
                \end{align*}
              \item
                For $w\in \pint$, the following relations are imposed on the algebra of the quantum group with category $\mc S_w$ by the generator $\PartBracketBWwW$ or rather its rotation $\PartPermWWwW$ (see Section~\ref{subsection:new-perspective}): For all $a,b_1,\ldots,b_{w-1},c\in \{u_{i,j}\}_{i,j=1}^n$
                \begin{align}
                  \label{eq:relations-1}
                  ab_1\ldots b_{w-1}c=cb_1\ldots b_{w-1}a.
                \end{align}
              \item For the quantum group whose intertwiner spaces are generated by the category $\mc S_0=\mc I_{\emptyset}$ with the generators $\{\mr{Br}_\bullet(\{v\})\mid v\in \pint\}$ and $\PartHalfLibWBW$ (or rather their combination $\PartPermWBvWvW$) we can express the fundamental relations as follows: For all $a,b,c_1,\ldots,c_v,d_1,\ldots,d_v\in \{u_{i,j}\}_{i,j=1}^n$ 
                \begin{align*}
                  ab^*c_1^*\ldots c_v^*d_1\ldots d_v=c_1^\ast\ldots c_v^\ast d_1\ldots d_vab^*.
                \end{align*}
                in addition to the relations induced by  $\PartHalfLibWBW$.
\item For general subsemigroups $D$ of $(\nnint,+)$ the relations of the quantum group with associated category $\mc I_D$ cannot be expressed as compactly. \par
  We adopt the convention $-0\neq 0$ and \[-v<-(v-1)<\ldots<-1<-0<0<1<\ldots<v-1<v\] and define $I_v\eqpd \{-v,\ldots,-1,-0,0,1,\ldots,v\}$ for all $v\in \pint$. Moreover, for all $m\in \pint$ abbreviate $\nnint(m)\eqpd \{0,\ldots,m\}$ and $\pint(m)\eqpd \{1,\ldots,m\}$. Then, the relations induced by $\mr{Br}_\bullet(\nnint(m)\backslash D)$ for a submonoid $D$ of $(\nnint,+)$, i.e.\ $0\in D$, and $v\in \pint$ with $v\notin D$ are the following: For all $\alpha,\beta:I_v\to \pint(n)$
  \begin{align}
    \label{eq:relations-2}
    \sum_{\gamma: I_v \to  \pint(n)}\left(\prod_{\substack{j\in \nnint(v)\\j\notin D}}\delta_{\gamma_{-j},\gamma_{j}}\delta_{\beta_{-j},\beta_j}\right)\left(\prod_{\substack{i\in \nnint(v)\\i\in D}}\delta_{\gamma_{-i},\beta_{-i}}\delta_{\gamma_{i},\beta_i}\right)\, u_{\gamma_{-v},\alpha_{-v}}^*\ldots u_{\gamma_{-0},\alpha_{-0}}^*u_{\gamma_{0},\alpha_{0}}\ldots u_{\gamma_{v},\alpha_{v}}\\
\notag    =\sum_{\gamma':I_v \to \pint(n) }\left(\prod_{\substack{j\in \nnint(v)\\j\notin D}}\delta_{\gamma'_{-j},\gamma'_{j}}\delta_{\alpha_{-j},\alpha_j}\right)\left(\prod_{\substack{i\in \nnint(v)\\i\in D}}\delta_{\gamma'_{-i},\alpha_{-i}}\delta_{\gamma'_{i},\alpha_i}\right)\, u_{\beta_{-v},\gamma'_{-v}}^*\ldots u_{\beta_{-0},\gamma'_{-0}}^*u_{\beta_{0},\gamma'_{0}}\ldots u_{\beta_{v},\gamma'_{v}}.
  \end{align}
  For example, the partition $\mr{Br}_\bullet(\{1,2,5\})$ 
\begin{align*}
  \begin{tikzpicture}[scale=0.666]
    \node [circle, scale=0.4, draw=black, fill=black] (x1) at (0,0) {};
    \node [circle, scale=0.4, draw=black, fill=black] (x2) at (1,0) {};
    \node [circle, scale=0.4, draw=black, fill=black] (x3) at (2,0) {};
    \node [circle, scale=0.4, draw=black, fill=black] (x4) at (3,0) {};
    \node [circle, scale=0.4, draw=black, fill=black] (x5) at (4,0) {};
    \node [circle, scale=0.4, draw=black, fill=black] (x6) at (5,0) {};
    \node [circle, scale=0.4, draw=black, fill=white] (x7) at (6,0) {};
    \node [circle, scale=0.4, draw=black, fill=white] (x8) at (7,0) {};
    \node [circle, scale=0.4, draw=black, fill=white] (x9) at (8,0) {};
    \node [circle, scale=0.4, draw=black, fill=white] (x10) at (9,0) {};
    \node [circle, scale=0.4, draw=black, fill=white] (x11) at (10,0) {};
    \node [circle, scale=0.4, draw=black, fill=white] (x12) at (11,0) {};
    \node [circle, scale=0.4, draw=black, fill=black] (y1) at (0,7) {};
    \node [circle, scale=0.4, draw=black, fill=black] (y2) at (1,7) {};
    \node [circle, scale=0.4, draw=black, fill=black] (y3) at (2,7) {};
    \node [circle, scale=0.4, draw=black, fill=black] (y4) at (3,7) {};
    \node [circle, scale=0.4, draw=black, fill=black] (y5) at (4,7) {};
    \node [circle, scale=0.4, draw=black, fill=black] (y6) at (5,7) {};
    \node [circle, scale=0.4, draw=black, fill=white] (y7) at (6,7) {};
    \node [circle, scale=0.4, draw=black, fill=white] (y8) at (7,7) {};
    \node [circle, scale=0.4, draw=black, fill=white] (y9) at (8,7) {};
    \node [circle, scale=0.4, draw=black, fill=white] (y10) at (9,7) {};
    \node [circle, scale=0.4, draw=black, fill=white] (y11) at (10,7) {};
    \node [circle, scale=0.4, draw=black, fill=white] (y12) at (11,7) {};            
    \draw (x1) -- ++(0,3) -| (x12);
    \draw (y1) -- ++(0,-3) -| (y12);
    \draw (x2) -- (y2);
    \draw (x3) -- (y3);
    \draw (x6) -- (y6);
    \draw (x7) -- (y7);
    \draw (x10) -- (y10);
    \draw (x11) -- (y11);
    \draw (x4) -- ++(0,2) -| (x9);
    \draw (x5) -- ++(0,1) -| (x8);
    \draw (y4) -- ++(0,-2) -| (y9);
    \draw (y5) -- ++(0,-1) -| (y8);    
  \end{tikzpicture}.
\end{align*}
induces the following relations: For all $a_1,a_2,a_3,b_1,b_2,b_3\in \{u_{i,j}\}_{i,j=1}^n$ and all $i,j,i',j':\pint(3)\to \pint(n)$
\begin{align*}
  &\delta_{i_1',j_1'}\delta_{i_2',j_2'}\delta_{i_3',j_3'}\sum_{k_1,k_2,k_3=1}^nu_{k_1,i_1}^\ast a_1^\ast a_2^\ast u_{k_2,i_2}^\ast u_{k_3,i_3}^\ast a_3^\ast b_3u_{k_3,j_3}u_{k_2,j_2}b_2b_1u_{k_1,j_1}\\
  &=\delta_{i_1,j_1}\delta_{i_2,j_2}\delta_{i_3,j_3}\sum_{k_1',k_2',k_3'=1}^nu_{i_1',k_1'}^\ast a_1^\ast a_2^\ast u_{i_2',k_2'}^\ast u_{i_3',k_3'}^\ast a_3^\ast b_3u_{j_3',k_3'}u_{j_2',k_2'}b_2b_1u_{j_1',k_1'}.
\end{align*}
                 \end{enumerate}
                \par
                Put $u\eqpd (u_{i,j})_{i,j=1}^n$ and $\overline u\eqpd (u_{i,j}^\ast)_{i,j=1}^n$. Note that, although we hence know that $\mc S_w$ corresponds to the quantum group with $C^*$-algebra
                \begin{align*}
                  C^*(\{u_{ij}\}_{i,j=1}^n\mid u,\overline u \text{ unitary}, \forall a,b_1,\ldots,b_{w-1},c\in \{u_{i,j}\}_{i,j=1}^n: \text{relations \eqref{eq:relations-1} hold} ),
                \end{align*}
                while $\mc I_D$ corresponds to
                \begin{gather*}
                  C^*(\{u_{ij}\}_{i,j=1}^n\mid u,\overline u \text{ unitary}, \forall v\in \pint\backslash D:\forall \alpha,\beta: I_v\to \pint(n): \text{relations \eqref{eq:relations-2} hold}),
                \end{gather*}
                we know basically nothing about these quantum groups. In particular, it would be enlightening to construct these quantum groups from known ones, if possible, in the sense of \cite{TaWe15b}.
                
\subsection{Further Questions}
\begin{enumerate}[wide, labelwidth= !]
  \item With all easy quantum groups $G$ classified in the regions $U_n\subseteq G\subseteq U_n^+$ (present article and \cite{MaWe18a}) and $O_n\subseteq G\subseteq O_n^+$ (\cite{BaSp09}) as well as $O_n\subseteq G\subseteq U_n$ (\cite{TaWe15a}) and $O_n^+\subseteq G\subseteq U_n^+$ (\cite{TaWe15a}), a natural question is to find all unitary easy quantum groups $G$ with $O_n\subseteq G\subseteq U_n^+$. This is an ongoing project of the authors.
\item In general, the complete classification of categories of two-colored partitions is still an open question (see also \cite{TaWe15a}; see \cite{RaWe13} for the complete classification in the orthogonal case). The question of all such categories with $\PartIdenLoBB\otimes \PartIdenLoWW\in \mathcal C$ has recently been settled by Gromada  in \cite{Gr18}.
    \item In the orthogonal case, $O_n^\ast$ is not only the only easy quantum group $G$ such that $O_n\subseteq G\subseteq O_n^+$, but also the only compact quantum group in the region $O_n\subseteq G\subseteq O_n^*$ (\cite{BaBiCoCu11}). One wonders whether there exist  further compact quantum groups $G$ with $U_n\subseteq G\subseteq U_n^+$ besides the unitary easy quantum groups. 
	\item Our results might yield clues to advancing with the classification of bi-easy (\enquote{busy}) geometries begun in \cite{Ba17a}. Also, the study of affine homogeneous spaces of the free complex sphere (see \cite{Ba17b}) might benefit from the results in the present article.
	\item In \cite{Fr14a}, \cite{Fr14b} and \cite{Fr17}, Freslon investigates a different notion of colored partitions. And some of the results of \cite{MaWe18a} and this article, especially \cite[Lemma.~6.7]{MaWe18a}, generalize to his setting. One could try to apply the same methods to classify all the categories of this kind.
\end{enumerate}

\printbibliography

\end{document}